\newcommand\norm[1]{\left\lVert#1\right\rVert}
\begin{document}

\title{\bf Model Selection via the VC Dimension}

\author{\name Merlin Mpoudeu \email merlin.mpoudeu@huskers.unl.edu \\
       \addr Department of Statistics\\
       University of Nebraska-Lincoln\\
       Lincoln, NE 68503, USA
       \AND
       \name Bertrand\ Clarke \email bclarke3@unl.edu \\
       \addr Department of Statistics\\
       University of Nebraska-Lincoln\\
       Lincoln, NE 68503, USA}

\editor{Isabelle Guyon, Ingo Steinwart, Saharon Roset, John Shawe-Taylor}

\maketitle

\begin{abstract}%   <- trailing '%' for backward compatibility of .sty file
We derive an objective function that can be optimized to give an estimator 
of the Vapnik-Chervonenkis dimension for model selection in regression 
problems. We verify our estimator is consistent. Then, we verify it 
performs well compared to seven other model selection techniques.
We do this for a variety of types of data sets. 
\end{abstract}

\begin{keywords}
  Vapnik-Chervonenkis dimension, Model Selection, Bayesian information criterion, Sparsity methods, empirical risk minimization, multi-type data.
\end{keywords}

\section{Complexity and Model Selection}

Model selection is often the first problem that must be addressed when analyzing data. 
In {\cal{M}}-closed problems, see \cite{Bernado2000Bayesian}, the analyst posits a list of models and assumes one of them is true. In such cases, model selection is any procedure that uses data 
to identify one of the models on the model list. There is a vast literature on model selection in this context including information based methods such as the Aikaikie Information Criterion (AIC), the Bayes information criterion (BIC), residual based methods such as Mallows $C_p$ or branch and bound, and code length methods such as the two-stage coding proposed by \cite{barron1991minimum}. We also have computational search methods such as simulated annealing and genetic algorithms. In addition, cross-validation (CV) is often used with
non-parametric methods such as recursive partitioning, 
neural networks (aka deep learning) and kernel methods.
A less well developed approach to model selection is via complexity as assessed by the Vapnik-Chervonenkis (VC) dimension, here denoted by $d_{VC}$. Its earliest usage seems
to be in \cite{Vapnik:Chervonenkis:1968}.   A translation into English was published as \cite{Vapnik:Chervonenkis:1971}.  The VC dimension was initially called an
index of a collection of sets with respect to a sample and was developed to provide sufficient conditions for the uniform convergence of empirical distributions.
It was extended to provide a sense of dimension for function spaces, particularly for functions that represented classifiers.  

After two decades of development this
formed the foundation for the field of Statistical Learning Theory.
Amongst others, Statistical Learning Theory has the key concepts of 
empirical risk minimization (ERM) and structural risk minimization (SRM). 
The idea behind ERM is to find an action e.g., an indicator function,
that minimizes the empirical risk, see \cite{Vapnik:1998}, p. 8.  
The point of SRM is to consider the solutions to the ERM problem that have the smallest 
complexity, for instance as measured by the VC dimension, see \cite{Vapnik:1998}, p. 10.
Seeking a tradeoff between error and complexity has long been a guiding principle in
statistics; see \cite{Theil:1957} who helped develop 
the `{\rm $R^2_{\sf adj}$}' often used in linear regression.

Although, the VC dimension goes back to 1968, it wasn't until \cite{Vapnik:etal:1994} that a method for estimating $d_{VC}$ was proposed in the classification context. Specifically, given a collection $\mathbb{C}$ of classifiers, \cite{Vapnik:etal:1994} tried to estimate the VC dimension of $\mathbb{C}$ by deriving an objective function based on the expected value of the maximum difference between two empirical evaluations of a single loss function, here denoted by $\Delta$. The two empirical values come from dividing a given data set into a first and second part. The objective function proposed by \cite{Vapnik:etal:1994} depends on $d_{VC}$, the sample size $n$, and several constants that had to be determined.  Using their objective function, they derived an estimator $\hat{d}_{VC}$ for $d_{VC}$ given a class $\mathbb{C}$ of classifiers. This algorithm treated possible sample sizes as design points $n_{1}, n_{2}, \cdots, n_{L}$ and requires one level of bootstrapping. Despite the remarkable contribution of \cite{Vapnik:etal:1994},
the objective function was over-complex and the algorithm did not give a tight enough bound on $\Delta$.  Later, Vapnik and his collaborators suggested a fix to tighten the bound on $\Delta$. We 
do not use this here; it is unclear if this `fix' will work in classification, let alone regression.

Choosing the design points is a nontrivial source of variability in the estimate of $d_{VC}$. So, \cite{shao2000measuring} proposed an algorithm, based on extensive simulations, to generate optimal values of $n_{1}, n_{2}, \cdots, n_{L}$, given $L$. They argued that non-uniform values of the $n_{l}$'s gave better results than the uniform $n_{l}$'s used in \cite{Vapnik:etal:1994}.

More recently, in a pioneering paper that deserves more recognition that it has received, \cite{McDonald:etal:2011} established the consistency of the \cite{Vapnik:1998} estimator $\hat{d}_{VC}$ for $d_{VC}$ in the classification context.   

The main reason the \cite{Vapnik:etal:1994} estimator for $d_{VC}$ did not become more 
widely used despite the result in \cite{McDonald:etal:2011} is, we suggest, that it was too unstable
because the objective function did not bound $\Delta$ tightly enough.  In addition, the form
of the objective function in \cite{Vapnik:etal:1994} is more complicated and less well-motivated
than our result Theorem \ref{Theo23and4}.  The reason is that the derivation
in \cite{Vapnik:etal:1994} uses conditional probabilities, one of which
goes to zero quite quickly (with $n$).  So,  it contributes negligibly to the upper bound.  Our
derivation ignores the conditioning and bounds a form of $\Delta$ that is typically larger than
that used in \cite{Vapnik:etal:1994}.  Optimizing a factor in our bound also contributes to
generating a tighter bound on the error.

%The main contributions of this paper are 1) to extend \cite{Vapnik:etal:1994}'s methodology to produce a %different estimator $\hat{d}_{VC}$ that comes from a tighter bound on $\Delta$ and hence is more stable;  %2) to extend \cite{McDonald:etal:2011}'s consistency theorem to this new estimator so that estimating 
%$d_{VC}$ becomes effective in real regression problems; and 3) to provide extensive comparisons %among several
%model selection methods to show that the $\hat{d}_{VC}$ we propose works well.  From a practical %standpoint, this third contribution is the most important because there are already so 
%many model selection methods.
%All of our data and code for these comparisons can be freely downloaded from
%\url{https://github.com/poudas1981/Wheat_data_set}.

Our consistency proof is a simplification of the proof of the main result \cite{McDonald:etal:2011}.  
Accordingly, we obtain a slower rate of consistency, but the probability of correct model
selection still goes to one.  As a generality, consistency results can be regarded as
special cases of probably approximately correct (PAC) bounds, 
i.e., an estimator converges to its true value in the correct probability
on the underlying measure space.  Thus, loosely speaking,
we show that $P( \| d_{VC} - \hat{d}_{VC}\| > \delta) < \epsilon$
as $n \rightarrow \infty$ For any pre-assigned $\delta, \epsilon > 0$.
More general PAC bounds replace the fixed $\delta$ and $\epsilon$ with 
$\delta = \delta_n$ and $\epsilon = \epsilon_n$.  Usually, PAC bounds are used
to express stronger results, see for instance \cite{Parrado2012} who obtained a PAC bound of the form 
$P( D(\hat{Q} \| Q) > 1/n ) < \epsilon$ where $\hat{Q}$
is an estimator of the distribution $Q$ and $D$ is the relative entropy.  
In some cases of parameter estimation with independent data $X_{1:n} = (X_1, \ldots , X_n)$ one can obtain stronger results such as 
$P_\theta (W( N(\theta, \epsilon) | X_{1:n} )^c >  e^{-\gamma_1 n}) 
< e^{- \gamma_2 n}$ for large $n$,
where $\theta$ is the true value of a parameter, $W(\cdot | X_{1:n})$
is the posterior, $N(\theta, \epsilon)$ is an $\epsilon$-neighborhood
of $\theta$, and $\gamma_1, \gamma_2 > 0$.  Such results are easier to obtain for
discrete parameters than for continuous parameters.  This may explain why \cite{McDonald:etal:2011} are able to get an exponentially small form for $\epsilon$ for the \cite{Vapnik:etal:1994} estimator in a classification context.
We have established simple consistency because this is enough for
our purposes (regression) and we were unable to extend the more sophisticated proof in \cite{McDonald:etal:2011}.

Our overall strategy is to derive an objective function for estimating $d_{VC}$ in the regression setting 
that provides, we think, a tighter bound on a modified form of $\Delta$.  To convert from classification 
to regression, we discretize the loss 
used for regression into $m$ intervals (the case $m=1$ would then apply to classification).
To get a tighter bound, we change the form of $\Delta$ from what \cite{Vapnik:etal:1994} used and 
we optimize over the leading factor in our upper bound.   To use our estimator,
we use an extra layer of bootstrapping so the quantity we empirically optimize 
represents the quantity we derive theoretically more accurately. The extra layer of bootstrapping 
stabilizes our estimator of $d_{VC}$ and appears to reduce its dependency on the $n_{l}$'s.
If the models are nested in order of increasing VC dimension, it is straightforward to choose the model 
with VC dimension closest to our estimate $\hat{d}_{VC}$.  Otherwise, we can convert a non-nested 
problem to the nested case by ordering the inclusion of the covariates using a shrinkage method 
such as the `smoothly clipped absolute deviation' (SCAD, \cite{Fan&Li}), or correlation (see \cite{fan2008sure}), 
and use our $\hat{d}_{VC}$ as before.   Even when we force a model list to be nested, our model
selection method performs well compared to a range of competitors including \cite{Vapnik:etal:1994}'s 
original method, two forms of empirical risk minimization (denoted $\widehat{ERM}_{1}$ and $\widehat{ERM}_{2}$), AIC, BIC, CV (10-fold), SCAD,
and adaptive LASSO (ALASSO, \cite{zou2006adaptive}). Our general findings indicate that in 
realistic settings, model selection via estimated VC dimension, when properly 
done, is fully competitive with existing methods and, unlike them, rarely gives abberant results.

This manuscript is structured as  follows. In Sec. \ref{chap:bounds} we present the main theory
justifying our estimator.   In Subsec. \ref{sec:Ext:Vap:Bounds} we discretize 
bounded loss functions so that upper bounds for the distinct regions of the expected 
supremal difference of empirical losses can be derived
and in Subsec. \ref{estimator} we define our estimator of the VC dimension and give an 
algorithm for how to compute it.
In Sec. \ref{ProofOfConsistency} use \cite{McDonald:etal:2011}'s consistency 
theorem to motivate our consistency theorem for $\hat{d}_{VC}$.
In Sec. \ref{chap:Numerical:Studies} we compare our method for model selection to AIC, BIC, CV, 
$\widehat{ERM}_{1}$, and $\widehat{ERM}_{2}$.    In this context, we suggest criteria to guide 
the selection of design points.   Our comparisons also include simplifying non-nested model 
lists by using correlation, SCAD, and ALASSO.
In Sec. \ref{GenreConcl} we discuss our overall findings in the context of model selection.

\section{Deriving an optimality criterion for estimating VC dimension}
\label{chap:bounds}

We are going to bound a form of $\Delta$ for use in regression.
This is the bound we will use to derive an estimator of the VC dimension.  
In Sec. \ref{sec:Ext:Vap:Bounds}, we present our alterantive version of the
\cite{Vapnik:etal:1994} bounds and in Sec. \ref{estimator}, we present an 
estimator of $d_{VC}$.  

\subsection{Extension of the \cite{Vapnik:etal:1994} bounds to regression }
\label{sec:Ext:Vap:Bounds}

Let $Z = (X,Y)$ be a random variable with outcomes $z = (x,y) $ assuming values in 
$\cal{X} \times \cal{Y}$. The first entry, $X = x$, is regarded as an  explanatory variable leading to $Y = y$.  Let $D$ be a set of $2n$ independent and identical (IID) copies of $Z$ and write $D =D^1 \cup D^2$ where $D^1 = \{Z_{1}, \ldots, Z_{n} \}$ is the first half and $D^{2} = \{Z_{n+1}, \ldots, Z_{2n}\}$ is the second half. Writing $Z_{i} = (X_{i}, Y_{i})$ for $i = 1\ldots 2n$, let 
$$
Q\left(Z_{i}, \alpha\right) = L\left(Y_{i}, f\left(X_{i}, \alpha\right)\right),
$$
for a bounded real valued loss function $L$ and $\alpha \in \Lambda$.
We assume that $\Lambda$ is a connected compact set in a finite dimensional 
real space and that the interior of $\Lambda$ is open and connected. 
Also, we assume the continuous functions $f(\cdot \mid \alpha)$ are 
parametrized by $\alpha$ continuously and one-to-one.
Thus, in our examples,
$\Lambda$ will be the parameter space for a class of regression functions $f(\cdot \mid \alpha)$.
For convenience we assume $L$, and hence $Q$, are also
continuous.

Now, for $\ell = 1, 2$ let $\alpha_\ell \in \Lambda$ and write
$$
Q_{1}\left(Z_{i}\right) = Q_{1}\left(Z_{i},\alpha_{1}\right)= L\left(Y_{i},f\left(X_{i},\alpha_{1}\right)\right)
\quad \hbox{and}\quad 
Q_{2}\left(Z_{i}\right)=Q_{2}\left(Z_{i},\alpha_{2}\right)= L\left(Y_{i},f\left(X_{i},\alpha_{2}\right)\right)
$$
where, in the first expression, it is assumed that  $Z_{i}\in D^{1}$ and in the second expression it is assumed that $Z_{i} \in D^{2}$.
Also, assume $0 \leq Q\left(\cdot, \alpha \right) \le B$ for all $\alpha \in \Lambda$ and some $B\in \mathbb{R}$.

Denote the discretization of $Q_{\ell}$ using $m$ disjoint intervals (with union $[0, B)$) given by
 \begin{equation}
 \label{E11}
 Q_{\ell}(Z) = Q^{m}_{\ell j}\left(z_{i},\alpha_{\ell}\right)=\sum_{j = 0}^{m-1}\frac{(2j+1)B}{2m}\mathbb{I}\left[Q_{\ell}(Z)\in I_{j}^{m}\right],
\end{equation}
where $\ell = 1, 2$, $z_{i} \in D^{\ell}$ and $j = 0, 1,\cdots, m-1$. 
Note that the numbers $\frac{(2j+1)B}{2m}$ are the midpoints of the uniform left-closed, right-open
partition of $[0,B)$ into $m$ sub-intervals, here denoted $I^m_j$. In \eqref{E11},
$\mathbb{I}[\cdot]$ is an indicator function taking value 1 when its argument is true and taking value 0 when its argument is false. 
%First, we oberve that the discretization into $m$ intervals is sound in the sense that as $m \rightarrow %\infty$, we will recover the original (undiscretized) quantity. 
%The proof is similar to the proof of Theorem 1.5.5 in \cite{Ash:1972} and only requires that $Q(Z)$ be
%Borel measurable. Our result \eqref{convergence} implies the decomposition of the regression 
%problem into $m$ classification problems is asymptotically valid.
Now, let
\begin{equation}
    \label{nu1and2}
    \nu_{1}\left(D^{1}\right)= \frac{1}{n}\sum_{i=1}^{n}Q_1\left(Z_{i}\right)
\quad
\hbox{and}
\quad
\nu_{2}\left(D^{2}\right)= \frac{1}{n}\sum_{i=1}^{n}Q_2\left(Z_{n+i}\right)
\end{equation}
be the empirical risk using the first and second half of the sample,
respectively. 
Observe that the empirical counts of the data points whose losses land in $I^m_j$ are 
$$
N^m_{1j} = \sum_{i=1}^{n}\mathbb{I}\left[Q_{1}\left(Z_{n+i}\right)\in I_j^m\right]
\quad \hbox{and} \quad
N_{2j}^{m} = \sum_{i=1}^{n}\mathbb{I}\left[Q_{2}\left(Z_{i}\right)\in I_j^m\right].
$$
This means we are evaluating the $\alpha_{1}$ model on the second half of the data and the $\alpha_{2}$ model on the first half of the data. 
So, we have expressions for the empirical losses of the discretized loss function:
\begin{equation}
\label{nu1and2discretized}
\nu_{1}^{m}\left(D^2\right) = \frac{1}{n}\sum_{j = 0}^{m - 1}N_{2,j}^{m}\frac{(2j+1)B}{2m} \quad \hbox{and} \quad \nu_{2}^{m}\left(D^1\right) = \frac{1}{n}\sum_{j = 0}^{m - 1}N_{1,j}^{m}\frac{(2j+1)B}{2m}.
\end{equation}
%The expressions in \eqref{nu1and2discretized} converge to the cross-validatory analogs of the
% expressions in \eqref{nu1and2}, by essentially the same reasoning as in the proof of 
%Proposition \ref{Prop1}.

We begin by controlling $\Delta$, the expected supremal difference between two evaluations of a bounded loss function, to be formally defined in \eqref{delta}.
Let $\epsilon > 0$ and let 
\begin{equation}
    \label{Intervalnu1and2}
    \nu_{1,j}^m\left(Z_{n+1:2n}\right) = \frac{1}{n}N_{2,j}^{m}\frac{(2j+1)B}{2m} \quad
    \hbox{and} \quad
    \nu_{2,j}^m\left(Z_{1:n}\right) = \frac{1}{n}N_{1,j}^{m}\frac{(2j+1)B}{2m}.
\end{equation}
We define the events
\begin{equation}
    \label{Aepsilon}
    A_{\epsilon} = \bigcup_{m = 0}^{m-1}A_{\epsilon, m},
\end{equation}
where
\begin{equation}
    \label{NewAepsilon}
    A_{\epsilon,m} = \left\{Z_{1:2n} | \sup_{\alpha_1, \alpha_{2} \in \Lambda} \left[\nu^m_1\left(Z_{n+1:2n}\right) - \nu^m_2\left(Z_{1:n}\right)\right]\geq \epsilon\right\}.
\end{equation}
Since $A_{\epsilon}$ is defined on the entire range of our loss function, and we want to partition the range into $m$ disjoint intervals, write
\begin{eqnarray*}
   A_{\epsilon,m}\subseteq \left\{Z_{1:2n}~|~ \exists~ j : \sup_{\alpha_{1},~\alpha_{2}\in ~ \Lambda}\left(\nu_{1j}^{m}\left(Z_{n+1:2n}\right)-\nu_{2j}^{m}\left(Z_{1:n},\alpha_{2},m\right)\right)\ge\frac{\epsilon}{m}\right\}
   \subseteq \bigcup_{j=0}^{m-1} A_{\epsilon,m,j},
\end{eqnarray*}
where $ A_{\epsilon,m,j} = \left\{Z_{1:2n}~|~ \sup_{\alpha_{1},~\alpha_{2}~\in\Lambda}\left(\nu_{1j}^{m}\left(Z_{n+1:2n}\right)-
\nu_{2j}^{m}\left(Z_{1:n}\right)\right)\ge\frac{\epsilon}{m}\right\}$.
Since $\Lambda$ is compact, $\Lambda \times \Lambda$ is compact, and the supremum over $\Lambda \times \Lambda$ in $A_{\epsilon,m,j}$ will be achieved at some 
$$
\left(\alpha_{1j}^{m}, \alpha_{2j}^{m}\right)  = \arg\sup_{\alpha_{1},~\alpha_{2}\in \Lambda}\left(\nu_{1j}^{m}\left(z_{n+1:2n}\right)-\nu_{2j}^{m}\left(z_{1:n}\right)\right)\in \Lambda \times \Lambda
$$
where $z_{n+1:2n}$ and $z_{1:n}$ are outcomes of $Z_{n+1:2n}$ and $Z_{1:n}$ respectively.

Next, fix any value $j \in \{0, 1, \ldots, m-1\}$. 
For any fixed $z_{2n}$, and any given $\alpha_{1}$ and $\alpha_{2}$, form the vector of length $2n$ of the form
\begin{eqnarray}
&& \left(Q_{1j}^m\left(Z_{n+1:2n}, \alpha_{1}\right), Q_{2j}^m\left(Z_{1:n}, \alpha_{2}\right)\right) \nonumber \\  \nonumber &=&  \left(Q^m_{1j}\left(Z_{n+1}, \alpha_{1}\right), \ldots,
Q^m_{1j}\left(Z_{n+1}, \alpha_{1}\right), Q^m_{2j}\left(Z_{1}, \alpha_{2}\right), \ldots, Q^m_{2j}\left(Z_{n}, \alpha_{2}\right)\right).
\end{eqnarray}
For any $\alpha_{1}$, $\alpha_{1}^\prime$, $\alpha_{2}$, and $\alpha_2^\prime\in \Lambda$, define
\begin{eqnarray}
&& 
\left(\alpha_{1}, \alpha_2\right) \sim \left(\alpha_1^\prime, \alpha^\prime_{2}\right) \Longleftrightarrow
\nonumber \\
&& \left(Q_{1j}^m\left(Z_{n+1:2n}, \alpha_{1}\right), Q_{2j}^m\left(Z_{1:n}, \alpha_{2}\right)\right) = \left(Q_{1j}^m\left(Z_{n+1:2n}, \alpha^\prime_{1}\right), Q_{2j}^m\left(Z_{1:n}, \alpha^\prime_{2}\right)\right).
\nonumber
\end{eqnarray}
So, for any fixed $Z_{1:2n} = z_{1:2n}$ it is seen that $\sim$ is an equivalence relation on $\Lambda \times \Lambda$
and therefore partitions $\Lambda \times \Lambda$ into disjoint equivalence classes.
Denote the number of these classes by $N^{\Lambda}_{j} = N^{\Lambda}_{j}\left(z_{1:2n}\right) = N^{\Lambda}_{j}(z_{1},z_{2},\dots,z_{2n})$.

For fixed $j$, let $K_{j}\left(z_{1:2n}\right)$ be the number of equivalence classes in $\Lambda \times \Lambda$ and for a given $\alpha^* = (\alpha_{1}, \alpha_{2})\in \Lambda \times \Lambda $
write $[\alpha^*]$ to be the equivalence class that contains it. Now, for $k =1 \ldots,K_j$ let
$$
\alpha_{jk}^{*} = \left(\alpha^{*}_{1jk}, \alpha^{*}_{2jk}\right) = \arg\sup_{\alpha_{1j},\alpha_{2j}\in\left(\Lambda \times\Lambda\right)_{k}}\left(\nu_{1j}^{m}\left(Z_{n+1:2n}\right)-\nu_{2j}^{m}\left(Z_{1:n}\right)\right) ,
$$
where $(\Lambda \times \Lambda )_{k}$ is the $k$-th equivalence class.
Now, $\bigcup_{k=1}^{K_{j}}\left[\alpha_{jk}^{*}\right] = \Lambda \times \Lambda$ and  $\left[\alpha^*_{jk}\right]\cap\left[\alpha^*_{jk^\prime}\right] = \phi$
unless $k = k^\prime$. 
Clearly, $K_{j}= N^{\Lambda}_{j}\left(z_{1:2n}\right)$.

To make use of the above partitioning of $A_{\epsilon}$,
consider mapping the underlying measure space space ${\cal{Z}}_{1:2n}$ onto itself using $(2n)!$ distinct permutations
$T = T_{i}$.  Then, if $f$ is integrable with respect to the distribution of $Z_i$, its Riemann-Stieltjes
integral satisfies
$$
\int_{{\cal{Z}}^{2n}}^{}f\left(Z_{1:2n}\right)\emph{dF}\left(Z_{1:2n}\right) = \int_{{\cal{Z}}^{2n}}^{}f\left(T_{i}Z_{1:2n}\right)\emph{dF}\left(Z_{1:2n}\right),
$$
and this gives
\begin{equation}
\label{permid}
  \int_{{\cal{Z}}^{2n}}^{}f\left(Z_{1:2n}\right)\emph{dF}\left(Z_{1:2n}\right) = \int_{{\cal{Z}}^{2n}}^{}\frac{\sum_{i=1}^{(2n)!}f\left(T_{i}Z_{1:2n}\right)}{(2n)!}\emph{dF}\left(Z_{1:2n}\right).
\end{equation}
%To make use of \ref{permid} we need the following results from \cite{Vapnik:1998}.
%%%%%%%%%%%%%%%%%%%%%%%%%%%%%%%%%%%%%%%%%%%%%%%%%%%%%%%%%%%%%%%%%%%%%%%%%%%%%%%%%%%%%%%%%%%%%%%%%%%%%%%%%%%%%%%%%%%%%%%%%%%%%
Our first main result is structured the same way as in \cite{Vapnik:etal:1994}, however, 
the differences are in the details. For instance, our equivalence class is defined on 
$\Lambda \times \Lambda$, we use a cross-validation form of the error, 
we discretized the loss function, and our result leads to Theorem \ref{Theo23and4} that only has one term, whereas the corresponding result in \cite{Vapnik:etal:1994} has three terms.
 \begin{theorem}
   \label{NewProp2}
  Let $\epsilon \ge 0,\quad m \in  \mathcal{N}, \hbox{and},\quad d_{VC} = VC\left( \left\{Q\left(\cdot,\alpha\right): \alpha \in  \Lambda\right\}\right)$. If $d_{VC}$ is finite, then
  \begin{equation}
  \label{NewUPBO}
  P\left(A_{\epsilon,m}\right) \leq 2m\left(\frac{2ne}{d_{VC}}\right)^{d_{VC}}\exp\left\{-\frac{n\epsilon^2}{m^{2}}\right\}.
  \end{equation}
 \end{theorem}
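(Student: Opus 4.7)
The plan is to run the classical Vapnik symmetrization argument combined with Sauer--Shelah, adapted to the discretized, cross-validated form of $A_{\epsilon,m}$. The containment $A_{\epsilon,m}\subseteq \bigcup_{j=0}^{m-1}A_{\epsilon,m,j}$ established in the text above, together with a union bound over $j$, reduces the problem to showing that $P(A_{\epsilon,m,j})\le 2(2ne/d_{VC})^{d_{VC}}\exp(-n\epsilon^{2}/m^{2})$ for each fixed $j\in\{0,\ldots,m-1\}$; the leading factor of $m$ in the target bound is absorbed by this union bound, and the remainder of the argument runs at a single discretization level.

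Fix $j$. The equivalence relation $\sim$ partitions $\Lambda\times\Lambda$ into $K_{j}(z_{1:2n})$ classes, and because $\nu_{1j}^{m}-\nu_{2j}^{m}$ depends on $(\alpha_{1},\alpha_{2})$ only through the value pattern produced on $z_{1:2n}$, the supremum defining $A_{\epsilon,m,j}$ is attained at the $K_{j}$ representatives $(\alpha^{*}_{1jk},\alpha^{*}_{2jk})$. A second union bound, this time over $k$, bounds $\mathbb{I}_{A_{\epsilon,m,j}}$ by a sum of $K_{j}$ indicators of events involving data-dependent parameter pairs. The data-dependence is what the permutation identity~\eqref{permid} is designed to remove: averaging over the $(2n)!$ label swaps and conditioning on the unordered multiset $\{Z_{1},\ldots,Z_{2n}\}$ converts each such event into a purely combinatorial tail for a uniformly random split of $2n$ fixed $\{0,(2j+1)B/(2m)\}$-valued numbers into two groups of $n$. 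A Hoeffding-type inequality for sums under a random permutation then produces a two-sided tail of the form $2\exp(-n\epsilon^{2}/m^{2})$, supplying both the exponential decay and the leading factor of $2$ in the target bound.

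Summing over the $K_{j}$ equivalence classes and then taking expectation over the multiset reduces the $j$-th bound to $2\,\mathbb{E}[K_{j}(Z_{1:2n})]\exp(-n\epsilon^{2}/m^{2})$, and the proof is completed by applying Sauer--Shelah to the VC-class $\{Q(\cdot,\alpha):\alpha\in\Lambda\}$ to obtain $K_{j}(z_{1:2n})\le(2ne/d_{VC})^{d_{VC}}$ uniformly in $z_{1:2n}$, since the discretized level-set class $\{\mathbb{I}[Q(\cdot,\alpha)\in I_{j}^{m}]\}$ is a post-processing of a class with VC dimension $d_{VC}$. Combining the three ingredients yields the claimed inequality~\eqref{NewUPBO}.

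The main obstacle I anticipate is the Sauer--Shelah step itself. The equivalence relation lives on the product space $\Lambda\times\Lambda$ with the two coordinates acting on disjoint halves of the pooled sample, so a naive count gives the square of the growth function at $n$ points rather than the single growth function at $2n$ points that appears in the statement; one must either invoke a refined combinatorial argument identifying product-space value patterns with single-class patterns on the pooled $2n$-sample, or absorb a weaker constant into the leading factor. A secondary technical point is calibrating the Hoeffding constants against the discretization scaling $(2j+1)B/(2m)$ so that the exponent is exactly $n\epsilon^{2}/m^{2}$ uniformly in $j$, rather than a $j$-dependent expression that would otherwise have to be dominated when summing over $j$.
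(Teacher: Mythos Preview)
Your proposal is correct and follows essentially the same architecture as the paper's proof: union bound over $j$, permutation averaging via \eqref{permid}, decomposition into the $K_j$ equivalence classes, the hypergeometric/Hoeffding tail $\Gamma_j\le 2\exp(-n\epsilon^2/m^2)$, and finally the Sauer--Shelah bound on $E[N_j^{\Lambda}]$.

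On your two anticipated obstacles: the paper does not resolve them so much as delegate them. For the permutation tail it simply invokes Vapnik (1998), Sec.~4.13, writing the summation set as $\{k:|k/n-(N_j^m-k)/n|\ge \epsilon/m\}$ and quoting $\Gamma_j\le 2\exp(-n\epsilon^2/m^2)$ without tracking the $(2j{+}1)B/(2m)$ scaling you flag. For the growth-function step it treats $N_j^{\Lambda}(z_{1:2n})$ as the ordinary single-class pattern count on the pooled $2n$-sample and applies Vapnik's Theorem~4.3 directly, i.e., it takes for granted exactly the identification you describe as the ``refined combinatorial argument'' rather than bounding by the square of the growth function at $n$ points. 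So your concern about the $\Lambda\times\Lambda$ product structure is well placed, but the paper's own proof neither engages with it nor pays a weaker constant; it simply asserts the $(2ne/d_{VC})^{d_{VC}}$ bound via citation.
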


\noindent
{\bf Remark:}
 The technique used to prove \eqref{NewUPBO} is similar to the proof of Theorem 4.1 in \cite{Vapnik:1998}
 giving bounds for the uniform convergence of the empirical risk. The hypotheses of Theorem 4.1 in 
\cite{Vapnik:1998} require only the existence of the key quantities e.g the annealed entropy, and the 
growth function. Our only extra condition is that $d_{VC}$ is finite.

\begin{proof}
Let $\Delta^{m}_{j}\left(T_{i}Z_{1:2n},\alpha_{j}^{*}\right)=\nu_{1j}^{m}\left(T_{i}Z_{n+1:2n},\alpha_{1j}^{*}\right) - \nu_{2j}^{m}\left(T_{i}Z_{1:n},\alpha_{2j}^{*}\right)$, 
where

\begin{eqnarray}
\alpha_{j}^{*} = \left(\alpha_{1j}^*, \alpha_{2j}^*\right)=  \arg\underset{\left(\alpha_{1j}, \alpha_{2j}\right) \in \Lambda \times \Lambda} \max \Delta_{j}^{m}\left(T_{i}Z_{1:2n},\alpha_{j}\right) \in \Lambda \times \Lambda.
\nonumber
\end{eqnarray}
Using some manipulations, we have
  \begin{eqnarray}
  \label{UpAep}
  P\left(A_{\epsilon,m}\right) &\leq& P\left(\bigcup_{j=0}^{m-1}A_{\epsilon,m,j}\right)\leq \sum_{j=0}^{m-1}P\left(A_{\epsilon,m,j}\right)\nonumber\\\nonumber\\
   %&\leq& \sum_{j=0}^{m-1}P\left(A_{\epsilon,m,j}\right)\nonumber\\
   &=& \sum_{j=0}^{m-1}P\left(\left\{Z_{1:2n}:\sup_{\alpha_{1},~\alpha_{2}\in\Lambda}\left(\nu_{1j}^{m}(Z_{n+1:2n},\alpha_{1})-\nu_{2j}^{m}(Z_{1n},\alpha_{2})\right)\geq\frac{\epsilon}{m}\right\}\right).
\nonumber
\end{eqnarray}
Continuing the equality gives that the RHS equals
\begin{eqnarray}
 \sum_{j=0}^{m-1}P\left(\left\{Z_{1:2n}:\left(\nu_{1j}^{m}(T_{i}Z_{n+1:2n},\alpha_{1j}^{*})-\nu_{2j}^{m}(T_{i}Z_{1:n},\alpha_{2j}^{*})\right)\geq\frac{\epsilon}{m}\right\}\right)
\nonumber
\end{eqnarray}
\begin{eqnarray}
   &=& \sum_{j=0}^{m-1}P\left(\left\{Z_{1:2n}: \Delta_{j}^{m}\left(T_{i}Z_{1:2n},\alpha_{j}^{*}\right)\geq\frac{\epsilon}{m}\right\}\right)\nonumber\\
   &=& \frac{1}{(2n)!}\sum_{j=0}^{m-1}\sum_{i=1}^{(2n)!}P\left(\left\{Z_{1:2n}: \Delta_{j}^{m}\left(T_{i}Z_{1:2n},\alpha_{j}^{*}\right)\geq\frac{\epsilon}{m}\right\}\right)
\nonumber \\
   &=& \frac{1}{(2n)!}\sum_{j=0}^{m-1}\sum_{i=1}^{(2n)!}\int_{}^{}\emph{I}_{\left\{Z_{1:2n}: \Delta_{j}^{m}\left(T_{i}Z_{1:2n},\alpha_{j}^{*}\right)\geq\frac{\epsilon}{m}\right\}}\left(z^{2n}\right)\emph{dP}(Z_{1:2n}).
\label{permprobbdd}
\end{eqnarray}
Using the properties of the equivalence relation $\sim$, for each fixed $j$ and $Z_{1:2n}$ we have the inequality 
\begin{eqnarray}
\label{Indfnbdd}
  \emph{I}_{\left\{Z_{1:2n}:\Delta^m_j\left(T_{i}Z_{1:2n},\alpha^{*}_{j}\right)\geq\epsilon\right\}}(Z_{1:2n})
&\leq&
\emph{I}_{\left\{Z_{1:2n}: \Delta_{j}^{m}\left(T_{i}Z_{1:2n},\alpha^{*}_{j1},m\right)\ge \frac{\epsilon}{m}\right\}}\left(z_{1:2n}\right) \nonumber \\
 &+&
%&+&
%+ \emph{I}_{\left\{Z^{2n}: \Delta_{j}(T_{i}Z,\alpha^{*}_{2j},m)\ge \epsilon\right\}}\left(z^{2n}\right)
%\nonumber \\
\dots + \emph{I}_{\left\{Z_{2n}:\Delta_{j}^{m}\left(T_{i}Z_{1:2n},\alpha^{*}_{jN^{\Lambda}_{j}(z_{1:2n})}\right)\ge\frac{\epsilon}{m}\right\}}\left(z_{1:2n}\right) \nonumber\\
   &=& \sum_{k=1}^{N^{\Lambda}_{j}(Z_{1:2n})}\emph{I}_{\left\{Z_{1:2n}: \Delta_{j}^{m}\left(T_{i}Z_{1:2n},\alpha^{*}_{jk}\right)\ge\frac{\epsilon}{m}\right\}}\left(Z_{1:2n}\right)
\end{eqnarray}
where
\begin{eqnarray*}
% \nonumber to remove numbering (before each equation)
  A_{\epsilon,m,j,k} &=& \left\{Z_{1:2n}: \Delta_{j}^{m}\left(T_{i}Z_{1:2n},\alpha^{*}_{kj}\right)\ge\frac{\epsilon}{m}\right\} \\
   &=& \left\{Z_{1:2n}: \sup_{\left(\alpha_{1j}, \alpha_{2j}\right)\in\left(\Lambda \times \Lambda\right)_{k}}\left(\nu_{1j}^{m}(T_{i}Z_{n+1:2n},\alpha_{1j})-\nu_{2j}^{m}(T_{i}Z_{1:n},\alpha_{2j})\right)\ge\frac{\epsilon}{m}\right\}
\end{eqnarray*}
and for fixed $j$ and each $k$, $\left[\alpha_{jk}^*\right] = \left(\Lambda \times \Lambda\right)_{k}$.
Now, using \eqref{Indfnbdd}, \eqref{permprobbdd} is bounded by
\begin{eqnarray}
  P\left(A_{\epsilon,m}\right) &\leq& \frac{1}{(2n)!}\sum_{j=0}^{m-1}\sum_{i=1}^{(2n)!}\int_{}^{}\sum_{k=1}^{N_{j}^{\Lambda}(Z_{1:2n})}\emph{I}_{\left\{Z_{1:2n}:\Delta_{j}^{m}(T_{i}Z_{1:2n},\alpha^{*}_{kj})\ge\frac{\epsilon}{m}\right\}}\left(Z_{1:2n}\right) \emph{dP}\left(Z_{1:2n}\right)
\nonumber\\
   &=& \int_{}^{}\sum_{j=0}^{m-1}\sum_{k=1}^{N_{j}^{\Lambda}(Z_{1:2n})}\left[\frac{1}{(2n)!}\sum_{i=1}^{(2n)!}\emph{I}_{\left\{Z_{1:2n}:\Delta_{j}^{*}(T_{i}Z_{1:2n},\alpha_{kj}^{*})\geq \frac{\epsilon}{m}\right\}}\left(Z_{1:2n}\right)\right]\emph{dP}\left(Z_{1:2n}\right) \nonumber .
\label{overallprobbdd}
\end{eqnarray}
The expression in square brackets is the fraction of the number of the $(2n)!$ permutations $T_{i}$ of $Z_{1:2n}$ for which $A_{\epsilon,m,j,k}$ is closed under $T_{i}$ for any fixed equivalence class $\left(\Lambda \times \Lambda\right)_k$. Following \cite{Vapnik:1998} Sec. 4.13, it equals
\begin{equation}
\label{E77}
  \Gamma_{j} = \sum_{k}^{}\frac{\binom{N^{m}_{j}}{k} \binom{2n-N^{m}_{j}}{N^{m}_{j}-k}}{\binom{2n}{n}}
\nonumber
\end{equation}
where the summation is over $k$'s in the set
$$
\left\{k: \left|\frac{k}{n}-\frac{N^m_j -k}{n}\right|\ge\frac{\epsilon}{m}\right\} \cap
   \{ k :  \max(0, N_j^m -n) \leq k \leq \min(N_j^m, n) \}
$$
where 
$
N^{m}_{j} = N_{1j}^{m} + N_{2j}^{m}.
$
Here, $\Gamma_{j}$ is the probability of choosing exactly $k$ sample data points whose losses fall in interval $I_{j}$.
From Sec. 4.13 in \cite{Vapnik:1998}, we have $\Gamma_{j} \le 2\exp\left(-\frac{n\epsilon^{2}}{m^2}\right)$. So, using this in \eqref{overallprobbdd} gives that $P\left(A_{\epsilon, m}\right)$ is upper bounded by
\begin{eqnarray}
   && \int_{}^{}\sum_{j=0}^{m-1}\sum_{k=1}^{N_{j}^{\Lambda}(Z_{1:2n})}2\exp\left(-\frac{n\epsilon^{2}}{m^2}\right)\emph{dP}(Z_{1:2n})
%\nonumber\\
   = 2\exp\left(-\frac{n\epsilon^{2}}{m^2}\right)\int_{}^{}\sum_{j=0}^{m-1}\sum_{k=1}^{N_{j}^{\Lambda}(z^{2n})}\emph{dP}(Z_{1:2n})
\nonumber \\
   &=& 2\exp\left(-\frac{n\epsilon^{2}}{m^2}\right)\sum_{j=0}^{m-1}\int_{}^{}\sum_{k=1}^{N_{j}^{\Lambda}(Z_{1:2n})}\emph{dP}(Z_{1:2n})
   = 2\exp\left(-\frac{n\epsilon^{2}}{m^2}\right)\sum_{j=0}^{m-1}\int_{}^{}N_{j}^{\Lambda}(Z_{1:2n})\emph{dP}(Z_{1:2n})
\nonumber \\
   &=& 2\exp\left(-\frac{n\epsilon^{2}}{m^2}\right)\sum_{j=0}^{m-1}E\left(N_{j}^{\Lambda}(Z_{1:2n})\right).
\label{probbddd}
\end{eqnarray}
Recall from \cite{Vapnik:1998} that the annealed entropy is defined as
$$
H_{ann}^\Lambda(2n) = \ln{EN^\Lambda\left(z_{1},\ldots, z_{2n}\right)},
$$
and the growth function is defined as
$$
G^\Lambda(2n) =\ln{\sup_{z_{1},z_{2}, \ldots, z_{2n}}N^\Lambda\left(z_{1}, \ldots, z_{2n}\right)}.
$$
These two quantities satisfy
$$
H^\Lambda_{ann}(2n) \leq G^\Lambda(2n).
$$
Now, Theorem 4.3 from \cite{Vapnik:1998} p.145 gives
$$
H_{ann}(2n) = \ln\left(E\left(N_{j}^{\Lambda}(z_{1:2n})\right)\right)\leq G(2n)\leq d_{VC}\ln\left(\frac{2ne}{d_{VC}}\right) \Rightarrow
 E\left(N_{j}^{\Lambda}(z^{2n})\right)\leq \left(\frac{2ne}{d_{VC}}\right)^{d_{VC}}.
$$
Using this $m$ times in \eqref{probbddd} gives the  Theorem.
\end{proof}

We can use Theorem \ref{NewProp2} to give an upper bound on the unknown true risk via the following
propositions.
 Let $R\left(\alpha_{k}\right)$ be the true unknown risk at $\alpha_{k}$ and $R_{emp}\left(\alpha_{k}\right)$ be the empirical risk at $\alpha_{k}$.
\begin{proposition}
\label{subProp1}
   For any $\eta \in (0,1)$, with probability at least $1-\eta$, the inequality
  \begin{eqnarray}
  \label{EquivProp1}
  R\left(\alpha_{k}\right) \leq R_{emp}\left(\alpha_{k}\right) + m\sqrt{\frac{1}{n}\log\left(\left(\frac{2m}{\eta}\right)\left(\frac{2ne}{d_{VC}}\right)^{d_{VC}}\right)}
  \end{eqnarray}
  holds simultaneously for all functions $Q\left(z,\alpha_{k}\right)$, $k = 1,2,\cdots, K$.
\end{proposition}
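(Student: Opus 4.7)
The plan is to invert Theorem \ref{NewProp2} and then pass from its two-sample statement to a one-sample statement via a symmetrization/ghost-sample argument. First I would set
$$
\eta := 2m\left(\frac{2ne}{d_{VC}}\right)^{d_{VC}}\exp\left(-\frac{n\epsilon^2}{m^2}\right)
$$
and solve for $\epsilon$, obtaining exactly
$$
\epsilon = m\sqrt{\frac{1}{n}\log\left(\left(\frac{2m}{\eta}\right)\left(\frac{2ne}{d_{VC}}\right)^{d_{VC}}\right)},
$$
which is precisely the excess term appearing in the proposition. Theorem \ref{NewProp2} then gives $P(A_{\epsilon,m}) \leq \eta$, so on the complement event, which has probability at least $1-\eta$, no choice of $\alpha_1,\alpha_2 \in \Lambda$ can push $\nu^m_1-\nu^m_2$ above $\epsilon$.

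Next I would convert this two-sample statement into one about $R(\alpha_k)-R_{emp}(\alpha_k)$ using Vapnik's ghost-sample device (\cite{Vapnik:1998}, Sec.~4): one of the halves is interpreted as a ghost sample whose empirical mean represents the true (discretized) risk, while the other is the actual empirical risk. Since the event $A_{\epsilon,m}$ already takes the supremum over $\alpha \in \Lambda$, the resulting one-sample inequality $R(\alpha)-R_{emp}(\alpha) < \epsilon$ holds simultaneously for every $\alpha \in \Lambda$, and in particular for each $\alpha_k$ in the finite list $\{\alpha_1,\ldots,\alpha_K\} \subset \Lambda$. Rearranging gives the claimed bound $R(\alpha_k) \leq R_{emp}(\alpha_k) + \epsilon$ for all $k$ on one event of probability at least $1-\eta$.

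The last bookkeeping step is to replace the discretized risks that naturally appear in Theorem \ref{NewProp2} by the original risks $R$ and $R_{emp}$. By construction of the midpoint partition in \eqref{E11}, $|Q(z,\alpha)-Q^m(z,\alpha)| \leq B/(2m)$ pointwise, so both $R(\alpha_k)-R^m(\alpha_k)$ and $R_{emp}(\alpha_k)-R^m_{emp}(\alpha_k)$ are of order $B/(2m)$ and can be absorbed into $\epsilon$ for the range of $m$ we use.

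The main obstacle I expect is the symmetrization passage from $(\nu^m_1,\nu^m_2)$ back to $(R,R_{emp})$ without picking up the extra factor of two that usually appears in Vapnik's basic lemma: the clean form of the proposition requires the prefactor $2m(2ne/d_{VC})^{d_{VC}}$ from Theorem \ref{NewProp2} to carry over intact. I would have to check that the factor $2$ already baked into that bound is doing this job, so that no further rescaling of $\epsilon$ is forced. Once that point is confirmed, the algebraic inversion, the union over $k$, and the discretization accounting are routine.
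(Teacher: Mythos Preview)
Your inversion step is exactly what the paper does: set the right-hand side of Theorem~\ref{NewProp2} equal to $\eta$ and solve for $\epsilon$, obtaining precisely the excess term in \eqref{EquivProp1}. After that, however, the paper is much more laconic than your proposal. It does not redo the symmetrization/ghost-sample passage or track the discretization error; it simply invokes ``the additive Chernoff bounds, expression 4.4 in \cite{Vapnik:1998}'' to write $R(\alpha_k)\le R_{emp}(\alpha_k)+\epsilon$ and substitutes the solved value of $\epsilon$. All of the worries you flag---the factor of two from symmetrization, absorbing the $B/(2m)$ discretization error---are treated as already handled by that Vapnik reference and by Theorem~\ref{NewProp2} itself.

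So your proposal is correct in content and matches the paper's argument in its essential step; the difference is that you attempt to unpack what the paper leaves as a one-line citation. Your version has the virtue of making the dependence on the ghost-sample device and the discretization explicit, whereas the paper's version is terser but relies on the reader trusting that expression 4.4 of \cite{Vapnik:1998} applies directly with the constants already appearing in Theorem~\ref{NewProp2}.
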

This inequality follows from the additive Chernoff bound and suggests that the best model will be the one that minimizes the RHS of inequality \eqref{EquivProp1}.  The use of  \eqref{EquivProp1}
in model selection as a form of risk minimization because as 
$d_{VC}$ increases the second term on the right increases.  This limits the size of
$d_{VC}$; we denoted this technique by $ERM_{1}$ since a penalized empirical risk is being
minimized.  (It would also be accurate to call this structural risk minimization but it
is more standard amongst people who work in model selection in both computer science and
statistics to regard it as a form of ERM.)

%\iffalse
\begin{proof}
  To obtain inequality \eqref{EquivProp1}, we equate the RHS of  Proposition \ref{NewProp2} to a positive number $0\le \eta\leq 1$. Thus:
\begin{equation*}
  \eta = 2m\left(\frac{2ne}{d_{VC}}\right)^{d_{VC}}\exp\left(-\frac{n\epsilon^{2}}{m^{2}}\right).
\end{equation*}
Solving for $\epsilon$ gives
\begin{eqnarray}
\label{solution}
% \nonumber % Remove numbering (before each equation)
  \epsilon &=& m\sqrt{\frac{1}{n}\log\left(\left(\frac{2m}{\eta}\right)\left(\frac{2ne}{d_{VC}}\right)^{d_{VC}}\right)}.
\end{eqnarray}
 Proposition \ref{subProp1} can be obtained from the additive Chernoff bounds, expression $4.4$ in \cite{Vapnik:1998} as follows
\begin{eqnarray}
\label{proofsubProp1}
R\left(\alpha_{k}\right) &\leq& R_{emp}\left(\alpha_{k}\right) + \epsilon.
\end{eqnarray}
Using \eqref{solution} in inequality \eqref{proofsubProp1}, completes the proof.
\end{proof}
%\fi

Parallel to Prop. \ref{subProp1}, we have the following for the multiplicative case.

\begin{proposition}
\label{subProp2}
  For any $\eta \in (0,1)$, with probability $1-\eta$, the inequality
\begin{equation}\label{EquivalentProp2}
  R\left(\alpha_{k}\right)\le R_{emp}\left(\alpha_{k}\right) + \frac{m^{2}}{2n}\log\left(\frac{2m}{\eta}\left(\frac{2ne}{d_{VC}}\right)^{d_{VC}}\right)\left(1+\sqrt{1+\frac{4nR_{emp}\left(\alpha_{k}\right)}{m^{2}\log\left(\frac{2m}{\eta}\left(\frac{2ne}{d_{VC}}\right)^{d_{VC}}\right)}}\right)
\end{equation}
holds simultaneously for all $K$ functions in the set $Q\left(z, \alpha_{k}\right)$, $k = 1,2,\dots, K$.
\end{proposition}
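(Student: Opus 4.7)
The plan is to mirror the proof of Proposition \ref{subProp1} almost verbatim, replacing the additive Chernoff step with its multiplicative counterpart. The theoretical input is the same Theorem \ref{NewProp2}, so the only change is which tail inequality we invoke at the end.

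First, I would set the right-hand side of \eqref{NewUPBO} equal to $\eta\in(0,1)$, i.e.
\begin{equation*}
\eta = 2m\left(\frac{2ne}{d_{VC}}\right)^{d_{VC}}\exp\!\left(-\frac{n\epsilon^{2}}{m^{2}}\right),
\end{equation*}
and solve for $\epsilon$ exactly as in \eqref{solution}, yielding
\begin{equation*}
\epsilon^{2} = \frac{m^{2}}{n}\log\!\left(\frac{2m}{\eta}\left(\frac{2ne}{d_{VC}}\right)^{d_{VC}}\right).
\end{equation*}
This $\epsilon$ is precisely the quantity that will appear inside the square root in the target inequality \eqref{EquivalentProp2}.

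Second, I would invoke the multiplicative form of the Chernoff bound given by Vapnik (expression 4.5 in \cite{Vapnik:1998}), which under the same event that controls uniform deviations yields, simultaneously for the $K$ functions $Q(\cdot,\alpha_k)$,
\begin{equation*}
R(\alpha_{k}) \le R_{emp}(\alpha_{k}) + \frac{\epsilon^{2}}{2}\left(1 + \sqrt{1 + \frac{4R_{emp}(\alpha_{k})}{\epsilon^{2}}}\right).
\end{equation*}
The derivation of this inequality in \cite{Vapnik:1998} proceeds by solving the quadratic in $\sqrt{R(\alpha_k)}$ obtained from a relative deviation bound; the uniform-in-$\alpha_k$ version is exactly what Theorem \ref{NewProp2} underwrites once we partition the loss range into $m$ cells, since the event on which our supremum bound fails has probability at most $\eta$.

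Finally, substituting the expression for $\epsilon^{2}$ into the displayed multiplicative bound gives \eqref{EquivalentProp2} after minor algebraic simplification (the $m^{2}/n$ in the prefactor combines with the logarithm, and the $\epsilon^{2}$ under the radical cancels against the numerator to produce $4nR_{emp}(\alpha_k)/(m^{2}\log(\cdot))$). The step I expect to require the most care is the uniform-in-$k$ justification of the multiplicative Chernoff bound: one must verify that the relative-deviation event implicit in Vapnik's relative Chernoff inequality is contained in the complement of $A_{\epsilon,m}$ controlled by Theorem \ref{NewProp2}, so that the same $\eta$ governs the simultaneous statement. Everything else is routine substitution.
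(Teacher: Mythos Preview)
Your proposal is correct and follows essentially the same route as the paper: set the bound from Theorem~\ref{NewProp2} equal to $\eta$ to obtain the same $\epsilon$ as in \eqref{solution}, then apply the multiplicative (relative) Chernoff inequality from \cite{Vapnik:1998} and substitute. The only cosmetic difference is that the paper cites inequality~(4.18) of \cite{Vapnik:1998}, namely $\bigl(R(\alpha_k)-R_{emp}(\alpha_k)\bigr)/\sqrt{R(\alpha_k)}\le\epsilon$, and carries out the complete-the-square step explicitly, whereas you cite the already-solved form and merely describe that derivation in words.
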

This follows from the multiplicative Chernoff bound and suggests that the best model will be the one that minimizes the right hand side (RHS) of \eqref{EquivalentProp2}. Analogous to \eqref{equivProp1} 
we refer to the use of \eqref{EquivalentProp2} in model selection as $ERM_{2}$.

%\iffalse
\begin{proof} Let $\epsilon, \eta >0$.  Then, inequality (4.18) in \cite{Vapnik:1998} gives, with probability at least $1-\eta$, that
\begin{eqnarray*}
% \nonumber % Remove numbering (before each equation)
  \frac{R\left(\alpha_{k}\right)-R_{emp}\left(\alpha_{k}\right)}{\sqrt{R\left(\alpha_{k}\right)}} \leq \epsilon.
  \end{eqnarray*}
  Routine algebraic manipulations and completing the square give
  \begin{eqnarray*}
    \left(R\left(\alpha_{k}\right)-0.5\left(\epsilon^{2}+2R_{emp}\left(\alpha_{k}\right)\right)\right)^{2}-0.25\left(\epsilon^{2}+2R_{emp}\left(\alpha_{k}\right)\right)^{2} \leq -R^{2}_{emp}\left(\alpha_{k}\right).
    \end{eqnarray*}
    Taking the square root on both sides and re-arranging gives
  \begin{eqnarray*}
  R\left(\alpha_{k}\right) \leq
R_{emp}\left(\alpha_{k}\right) + 0.5\epsilon^{2}\left(1 + \sqrt{1 + \frac{4R_{emp}\left(\alpha_{k}\right)}{\epsilon^{2}}}\right)
 % R\left(\alpha_{k}\right) &\leq& R_{emp}\left(\alpha_{k}\right) \nonumber\\
%&+& \frac{m^{2}}{2n}\log\left(\frac{2m}{\eta}\left(\frac{2ne}{d_{VC}}\right)^{d_{VC}}\right)
\end{eqnarray*}
Using \eqref{solution} in the last inequality completes the proof of the Proposition.
\end{proof}
%\fi
The proof of Propositions \ref{subProp1} and \ref{subProp2} are easy and can be found in \cite{Merlin:etal:2017}.
\noindent

Formally, let 
\begin{equation}
\label{deltam}
    \Delta_{m} =  E\left(\sup_{\alpha_{1}, \alpha_{2}\in\Lambda}\left|\nu_{1}^{m}(Z_{n+1:2n},\alpha_{1})-\nu_{2}^{m}(Z_{1:n},\alpha_{2})\right|\right)
\end{equation}
and 
\begin{equation}
    \label{delta}
    \Delta =  E\left(\sup_{\alpha_{1}, \alpha_{2}\in\Lambda}\left|\nu_{1}\left(Z_{n+1:2n},\alpha_{1}\right)-\nu_{2}\left(Z_{1:n},\alpha_{2}\right)\right|\right).
\end{equation}
 Obviously, $\Delta_{m} \approx \Delta$ provided that $m, n, \hbox{and}~  d_{VC} \rightarrow \infty$ at appropriate rates and the argument of $\Delta_{m}$ satisfies appropriate uniform integrability conditions. In fact, we do not use $\Delta_{m} \rightarrow \Delta$. For our purpose, the following bounds are sufficient. They are important to our
methodology because they bound the expected maximum difference between two values of the empirical losses by an expression that can be used to estimate the VC dimension.
\begin{theorem}
\label{Theo23and4}
  \begin{enumerate}
    \item If $d_{VC}< \infty$, we have
    \begin{equation}
     \Delta_{m} \leq m\sqrt{\frac{1}{n}\ln\left(2m^{3}\left(\frac{2ne}{d_{VC}}\right)^{d_{VC}}\right)}\\ + \frac{1}{m\sqrt{n\ln\left(2m^3\left(\frac{2ne}{d_{VC}}\right)^{d_{VC}}\right)}}
    \end{equation}
    \item If $d_{VC} \le \infty$, and
  $$
  D_{p}\left(\alpha\right) = \int_{0}^{\infty}\sqrt[p]{P\left\{Q\left(z,\alpha\right)\geq c\right\}}dc \leq \infty
  $$ where $1< p\leq 2$ is some fixed parameter, we have
    \begin{equation}
      \Delta \leq \frac{D_{p}(\alpha^{*})2^{2.5+\frac{1}{p}}\sqrt{d_{VC}\ln\left(\frac{ne}{d_{VC}}\right)}}{n^{1-\frac{1}{p}}} + \frac{16D_{p}(\alpha^{*})2^{2.5+\frac{1}{p}}}{n^{1-\frac{1}{p}}\sqrt{d_{VC}\ln\left(\frac{ne}{d_{VC}}\right)}}.
    \label{Hannbd}
    \end{equation}
    \item Assume that $d_{VC} \rightarrow \infty$, $\frac{n}{d_{VC}} \rightarrow \infty$, $m \rightarrow \infty$, $\ln\left(m\right)=o(n)$, \hbox{and}
$$
D_{p}\left(\alpha\right) = \int_{0}^{\infty}\sqrt[p]{P\left\{Q(z,\alpha)\ge c\right\}} dc\leq \infty
$$
where $p=2$. Then we have that
\begin{equation}
\label{objfn}
  \Delta \leq \min\left(1,8D_{p}(\alpha^{*})\right)\sqrt{\frac{d_{VC}}{n}\ln\left(\frac{2ne}{d_{VC}}\right)} .
%\label{objfn2}
\end{equation}

  \end{enumerate}
\end{theorem}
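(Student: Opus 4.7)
My plan is to handle the three parts separately, working from the bounded/discretized case (Part 1) out to the unbounded limit (Part 2) and then extract the asymptotic form (Part 3). In each case the key identity is the layer-cake formula $E X = \int_0^\infty P(X>t)\,dt$ applied to $X=\sup_{\alpha_1,\alpha_2}|\nu_1 - \nu_2|$, coupled with the uniform deviation bound proved in Theorem \ref{NewProp2}.

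For Part 1, I would start by noting that the discretized losses take values in $[0,B]$, so the sup-difference is bounded by $B$, and the event $A_{\epsilon,m}$ with absolute value inside costs only an extra factor of $2$ (by symmetry in the roles of $\nu_1^m$ and $\nu_2^m$). Thus for every $t\ge 0$,
\begin{equation*}
P\Bigl(\sup_{\alpha_1,\alpha_2}|\nu_1^m-\nu_2^m|\ge t\Bigr)\le 4m\Bigl(\tfrac{2ne}{d_{VC}}\Bigr)^{d_{VC}}\exp\bigl(-nt^2/m^2\bigr).
\end{equation*}
Splitting the tail integral at a threshold $a>0$ gives $\Delta_m\le a + \int_a^\infty P(\cdots>t)\,dt$, and the Gaussian tail estimate $\int_a^\infty e^{-nt^2/m^2}\,dt\le (m^2/(2na))\,e^{-na^2/m^2}$ yields a bound of the form $a + (2m^3/(na))(2ne/d_{VC})^{d_{VC}}e^{-na^2/m^2}$. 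Choosing $a=m\sqrt{n^{-1}\ln(2m^3(2ne/d_{VC})^{d_{VC}})}$ cancels the exponential against the prefactor and produces precisely the two-term sum stated in Part 1; this is the derivative-zero optimum of the two-term bound in $a$.

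For Part 2, the issue is that $Q$ is no longer assumed bounded, so I cannot simply truncate the tail integral at $B$; instead the finite $L_p$-type integral $D_p(\alpha)$ takes over that role. The approach I would follow is the one underlying Theorem 5.1 of \cite{Vapnik:1998}: truncate $Q$ at level $c$, apply the bounded bound of Part 1 to the truncated loss with $B$ replaced by $c$, and bound the expected contribution of the tail $\{Q>c\}$ by Markov/H\"older through the factor $D_p(\alpha^*)$. Integrating over $c$ and optimizing the split between the two contributions gives the $n^{1-1/p}$ scaling; the two terms in \eqref{Hannbd} come from the same kind of derivative-zero balance as in Part 1, with the Sauer--Shelah factor $(2ne/d_{VC})^{d_{VC}}$ replaced by its logarithm $d_{VC}\ln(ne/d_{VC})$ after taking the square root. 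The main obstacle here is tracking the constants $2^{2.5+1/p}$ through the truncation-plus-chaining argument; this is where I would need to be most careful, and where the result really depends on borrowing Vapnik's moment-based empirical process machinery rather than the elementary Hoeffding estimate used in Part 1.

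For Part 3, I would specialize Part 2 to $p=2$, giving the two-term bound $8D_2(\alpha^*)\sqrt{d_{VC}\ln(ne/d_{VC})/n} + 128D_2(\alpha^*)/\sqrt{nd_{VC}\ln(ne/d_{VC})}$. Under the stated regime $d_{VC}\to\infty$, $n/d_{VC}\to\infty$, and $\ln m=o(n)$, the second term is of smaller order than the first (their ratio is $16/(d_{VC}\ln(ne/d_{VC}))\to 0$), so both terms can be absorbed into a single constant-multiple of the first. Replacing $\ln(ne/d_{VC})$ by $\ln(2ne/d_{VC})$ is free, and the $\min(1,8D_p(\alpha^*))$ factor reflects that, since $\Delta\le 1$ trivially by the normalization on $Q$, one may always replace $8D_p$ by $1$ whenever the moment factor would otherwise render the bound vacuous. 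The work here is bookkeeping rather than new analysis; the only subtlety is verifying that the dominance of the first term is uniform in the remaining parameters so that a single absolute constant suffices.
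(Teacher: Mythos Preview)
Your proposal is correct and matches the paper's approach exactly: the paper states only that the proof ``rests on using the integral of probabilities identity and the bound in Theorem~\ref{NewProp2},'' and your layer-cake formula plus Theorem~\ref{NewProp2} for Part~1, Vapnik's truncation/moment machinery for Part~2, and asymptotic simplification for Part~3 are precisely that strategy. The one small wrinkle is your justification of the $\min(1,8D_p(\alpha^*))$ factor in Part~3: the claim ``$\Delta\le 1$ trivially by the normalization on $Q$'' is not supported by the paper's setup, since in Parts~2--3 the loss $Q$ is not assumed bounded (only the $D_p$ moment condition holds), so the ``1'' in the minimum must come from a separate argument---most likely by combining the Part~1 bound on $\Delta_m$ with $\Delta_m\to\Delta$ under an appropriate choice of $m$ in the stated asymptotic regime, rather than from a trivial a~priori bound on $\Delta$. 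This affects only the constant, not the structure of your argument.
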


\begin{proof}
The proof of Theorem \ref{Theo23and4} can be found in \cite{Merlin:etal:2017} in Appendices A1--A3. It rests on using the integral of probabilities identity and the bound in Theorem \ref{NewProp2}.
\end{proof}

\subsection{An Estimator of the VC Dimension}
\label{estimator}

The upper bound from Theorem \ref{Theo23and4} can be written as
\begin{equation}
\label{UPPTHEO6}
  \Phi_{d_{VC}}(n)=\min\left(1,8D_{p}(\alpha^{*})\right)\sqrt{\frac{d_{VC}}{n}\log\left(\frac{2ne}{d_{VC}}\right)}.
\end{equation}
This expression is meaningfully different from the form derived in \cite{Vapnik:etal:1994} and studied in \cite{McDonald:etal:2011}. Moreover, although  $\min\left(1,8D_{p}(\alpha^{*})\right)$ does not affect the optimization, it might not be the best constant for the inequality in \eqref{objfn}. So, we replace it with an arbitrary constant $c$ over which we optimize to make our upper bound as tight as possible.
In our computations,
we let $c$ vary from $ 0.01 $ to $100 $ in steps of size $0.01$. However, we have observed in practice
that the best value of $\hat{c}$ is usually between $1$ and $8$. The technique that we use to 
estimate $\hat{d}_{VC}$ is also different from that in \cite{Vapnik:etal:1994}. Indeed, 
our Algorithm \ref{Algo1} below accurately encapsulates the way the LHS of  \eqref{objfn} is 
formed unlike the algorithm in \cite{Vapnik:etal:1994}.

In particular, we use two bootstrapping procedures, one as a proxy for calculating expectations and  the second as a proxy for calculating a maximum.
Moreover, we split the dataset into two subsets.  Using the first dataset, we fit model I and using
the second we fit model II.
To explain how we find our estimate of the RHS of \eqref{objfn} from Theorem \ref{Theo23and4}, we start 
by replacing the sample size $n$ in \eqref{UPPTHEO6} with a specified value of 
design point, so that the only unknown is $d_{VC}$. Thus, formally, we replace \eqref{UPPTHEO6} by
$$
\Phi_{d_{VC}}^{*}(n_{l}) = \widehat{c}\sqrt{\frac{d_{VC}}{n_{l}}\log\left(\frac{2n_{l}e}{d_{VC}}\right)},
$$
where $\hat{c}$ is the optimal data driven constant.
If we knew the left hand side (LHS) of \eqref{objfn}, even computationally, we could use it to estimate $d_{VC}$. However, in general we don't know the LHS of \eqref{objfn}.
Instead, we generate one observation of the form
\begin{equation}
    \xi\left(n_{l}\right)= \Phi_{d_{VC}}^{*}(n_{l}) + \epsilon(n_{l})
\label{regmodel}
\end{equation}
for each design point $n_{l}$ by bootstrapping and denoted the realized values by $\hat{\xi}\left(n_{l}\right)$. In \eqref{regmodel}, we assume $\epsilon(n_{l})$ has a mean zero,
but an otherwise unknown, distribution.  We can therefore obtain a list of values of $\widehat{\xi}(n_{l})$
for the elements of $N_{L}$. In effect, we are assuming that $\Phi_{d_{VC}}^{*}(n_{l})$ provides a tight bound on $\Delta$, and hence $\Delta_{m}$ as suggested by Theorem \ref{Theo23and4}. Our algorithm is as follows.
\begin{description}
\label{Algo1}
  \item[Algorithm $\#1$:] {\bf Inputs:} A collection of regression models $\mathcal{G}=\left\{g_{\beta}:\beta\in\beta\right\}$,
  a data set, two integers $b_{1}$ and $b_{2}$ for the number of bootstrap samples, Integer $m$ for the number of disjoint intervals use to discretize the losses,
  A set of  design points $N_{L} = \left\{n_{1}, n_{2}, \cdots, n_{l}\right\}$.
  \item[1] For each $l = 1, 2,\cdots, L$ do;
  \item[2] Take a bootstrap sample of size $2n_{l}$ (with replacement) from our dataset;
  \item[3] Randomly divide the bootstrap data into two groups $~G_{1}~ \hbox{and}~ G_{2}$ of size $n_{l}$ each;
  \item[4] Fit two models one for $G_{1}$ and one for $G_{2}$;
  \item[5] Compute the square error of each model using the covariate and the  response from the other model, thus: For instance $$ SE_{1} = \left(predict(Model_{1}, x_{2})-y_{2}\right)^2,~ \hbox{and}~  SE_{2} = \left(predict(Model_{2}, x_{1})-y_{1}\right)^2$$
  where $(x_1,y_1)$ ranges over $G_1$ and $(x_2, y_2)$ ranges over $G_2$. So, there are $n_l$ values of $SE_1$ and $n_l$ values of $SE_2$.
  \item[6]  Discretize the loss function i.e., put each $SE_{1} ~ \hbox{and} ~ SE_{2}$ in one of the $m$ disjoint intervals;
  \item[7] Estimate $\nu_{1j}^{m}(Z_{n_{l}+1:2n_{l}})$ and $\nu_{2j}^{m}(Z_{1:n_{l}})$ using the $SE_{1}$'s and $SE_{2}$'s respectively in the intervals $j = 0,1, \ldots m-1$;
  \item[8] Compute the differences $\left|\nu_{1j}^{m}(z_{n_{l}+1:2n_{l}}) - \nu_{2j}^{m}(z_{1:n_{l}})\right|$ for $j = 0,1,\ldots, m-1$;
  \item[9] Repeat Steps $1-8$ $b_{1}$ times, take the mean interval-wise and sum it across all intervals so we have:
  $$
  r_{b_1}\left(n_{l}\right) = \sum_{j=0}^{m-1}mean\left|\nu_{1j}^{m}(z_{n_{1}+1:2n_{l}}) - \nu_{2j}^{m}(z_{1:n_{l}})\right|;
  $$
  \item[10] Repeat Steps $1-9$ $b_{2}$ times to $r_{b_{1,i}}(n_l)$  for $i =1,\ldots, b_{2}$ and form
   $$
  \hat{\xi}(n_{l})= \frac{1}{b_{2}}\sum_{i=1}^{b_{2}}r_{b_1,i}(n_{l}).
  $$
\end{description}

It is seen that Step 9 uses a mean even though the definition of $\Delta_m$ and $\Delta$
(see \eqref{deltam} and \eqref{delta}) has a supremum inside the expectation.  This is intentional because
using a supremum within each interval gave a worse estimator.  We suggest that summing the mean over
the intervals performs well because it is not too far from the supermum and is more stable.

Note that this algorithm is parallelizable because different $n_{l}$ can be sent to different nodes to speed  
the process of estimating $\hat{\xi}\left(\cdot \right)$ for all $n_{l}$.
After obtaining $\hat{\xi}(n_{l})$ for each value of $n_{l}$, we estimate $d_{VC}$ by minimizing the squared distance between $\hat{\xi}(n_{l})$ and $\Phi_{d_{VC}}^{*}(n_{l})$. Our objective function is
\begin{eqnarray}
f_{n_{l}}(d_{VC}) = \sum_{l=1}^{L}\left(\hat{\xi}(n_{l}) - \hat{c}\sqrt{\frac{d_{VC}}{n_{l}}\log\left(\frac{2n_{l}e}{d_{VC}}\right)}\right)^{2},
\label{objective}
\end{eqnarray}
where $L$ is the number of design points.
Optimizing \eqref{objective} usually only leads to numerical solutions and in our work below, we set $b_{1}=b_{2}=W$ for convenience.
%%%%%%%%%%%%%%%%%%%%%%%%%%%%%%%%%%%%%%%%%%%%%%%%%%%%%%%%%%%%%%%%%%%%%%%%%%%%%%%%%%%%%%%%%%%%%%%%%%%%%%%%%%%%%%%%%%%%%%%%%%%%%%%%%%%%%%%%%%%%%%%%%%%%%%%%%%%%%%%%%
\section{Proof of Consistency}
\label{ProofOfConsistency}

In this section, we provide a proof of consistency for the estimator $\hat{d}_{VC}$ for $d_{VC}$ that we presented in Subsec. \ref{estimator}. In many respects, the structure of this proof should be credited to \cite{McDonald:etal:2011}. Our contribution is to adapt \cite{McDonald:etal:2011} to our stable estimator for the regression context. We begin with some notation and definitions.

Let $\Phi = \left\{ \phi_{d_{VC},c} : H\times I \rightarrow \mathbb{R}\right\}$ where $d_{VC} \in H = (1,M]$ for some large $M \in \mathbb{N}, c\in I = [a,b]\subset \mathbb{R}$, and 
\begin{eqnarray}
\label{phidvc}
\phi_{d_{vc},c}\left(n_{l}\right) = c\sqrt{\frac{d_{VC}}{n_{l}}\log\left(\frac{2n_{l}e}{d_{VC}}\right)}
\end{eqnarray}
as derived in Subsec. \ref{estimator} (see expression \eqref{UPPTHEO6}). In expression \eqref{phidvc}, we assume $L$ values $n_{1}, \ldots, n_{L}$ have been pre-specified. Fix a value of $c$ and let $\phi_{c}$ be the corresponding elements of $\Phi$. The proof holds for each fixed $c$ and if we optimize over $c$ to obtain $\hat{c}$ 
as explained in Subsec. \ref{estimator}, the convergence of $\hat{d}_{VC}$ to the true value $d_{VC}$ will only be 
faster.

Without loss of generality, we assume that because $\Phi$ is compact we can choose $R > \sup_{d_{VC}} \norm{\phi_{d_{VC}}}_L$ where the norm $\norm{\cdot}_L$ is derived from the inner product
$$
\langle f, g \rangle = \frac{1}{L}\sum_{l=1}^{L}f\left(n_{l}\right)g\left(n_{l}\right)
$$
for real valued functions of a real variable. 
Thus $\phi_{d_{VC}} = \left(\phi_{d_{VC}}\left(n_{1}\right), \ldots, \phi_{d_{VC}}\left(n_{L}\right)\right)$ (where the subscript $c$ on the $\phi_{d_{VC},c}\left(n_{L}\right)'s$ in 
expression \eqref{phidvc} have been dropped for ease of notation). Now we can consider the class
\begin{eqnarray}
\label{Rbound}
\Phi_{c}(R) = \left\{\phi \in \Phi_{c} : \norm{\phi - \phi_{d_{VC}}}_{Q} < R\right\},
\end{eqnarray}
where $\phi_{d_{VC}}$ is the element of $\Phi_{c}$ corresponding to the correct value of $d_{VC}$. 
%We can decompose any $\Phi_{c}(R)$ into concentric rings around $\phi_{d_{VC}}$ by letting 
%\begin{eqnarray}
%R\left(2^{s+1}\delta\right) = \left\{\phi \in \Phi_{c} \mid 2^{s}\delta \leq \norm{\phi - \phi_{d_{VC}}}_{Q} \leq 
%2^{s+1}\delta\right\}.
%\end{eqnarray}
%Clearly, for any $\delta >0$ there is an $S$ so large that
%\begin{eqnarray}
%\label{deux}
%\left\{\phi \in \Phi_{c} \mid \delta < \norm{\phi - \phi_{d_{VC}}}_{Q}\leq 2R\right\} \subset %\bigcup_{s=0}^{S}R\left(2^{s+1}\delta\right).
%\end{eqnarray}
For a given $n_{l}$, we have 
\begin{eqnarray}
\hat{\xi}(n_{l})= \frac{1}{b_{2}}\sum_{i=1}^{b_{2}}r_{b_1,i}(n_{l})
\end{eqnarray}
where $r_{b_1,i}(n_{l})$ is the $i$ bootstrapped value of the integrand of $\Delta_{m}$ for each $n_{l}$, $i = 1,\ldots, W$ and $l = 1, \ldots, L$. In vector form, write $\hat{\xi} = \left(\hat{\xi}\left(n_{1}\right), \ldots, \hat{\xi}\left(n_{L}\right)\right)$.
Using \eqref{regmodel}, each $\hat{\xi}\left(n_{l}\right)$ can be represented as 
\begin{eqnarray}
\hat{\xi}\left(n_{l}\right) = \phi_{d_{VC}}\left(n_{l}\right) + \epsilon\left(n_{l}\right).
\end{eqnarray}
We have the following result.

\begin{theorem}
\label{TheoConsis}
  Suppose the true $d_{VC} \in (0,M]$ and that $\forall i = 1, \ldots, W$, $\forall l = 1, \ldots, L$, $r_{b_{1},i}\left(n_{l}\right) \sim N\left(\phi_{d_{VC}}(n_l), \sigma^2\right)$ and independent, $E\left(\epsilon(n_l)\right) = 0$, $Var(\epsilon(n_l)) = \sigma^2$. Then, on $\Phi_c(R)$, as $n \rightarrow \infty$, $m \rightarrow \infty$ and $W = W(n) \rightarrow \infty$ at suitable rates we have that 
  \begin{eqnarray}
  \label{deuxprime}
  P\left(\norm{\phi_{\hat{d}_{VC}} - \phi_{d_{VC}}}_{L} \geq \delta\right) = {\cal{O}}\left(\frac{1}{W}\right).
  \end{eqnarray}
\end{theorem}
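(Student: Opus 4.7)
The plan is to apply the standard least-squares ``basic inequality'' argument for M-estimators and rely on the Gaussian noise structure. Under the hypothesis, each $\hat{\xi}(n_l) - \phi_{d_{VC}}(n_l) = \epsilon(n_l)$ is the average of $W$ independent $N(\phi_{d_{VC}}(n_l), \sigma^{2})$ variables shifted to be centered, hence $\epsilon(n_l)$ has mean zero and variance of order $\sigma^{2}/W$, independent across $l$. This $1/W$ variance shrinkage is what will deliver the advertised $\mathcal{O}(1/W)$ rate, and Markov's inequality will suffice.

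First, since $\hat{d}_{VC}$ minimizes the objective in \eqref{objective},
$$
\sum_{l=1}^{L}\bigl(\hat{\xi}(n_l)-\phi_{\hat{d}_{VC}}(n_l)\bigr)^{2} \;\leq\; \sum_{l=1}^{L}\bigl(\hat{\xi}(n_l)-\phi_{d_{VC}}(n_l)\bigr)^{2} \;=\; \sum_{l=1}^{L}\epsilon(n_l)^{2}.
$$
Substituting $\hat{\xi}(n_l)=\phi_{d_{VC}}(n_l)+\epsilon(n_l)$ into the left side and expanding the squares so that the $\sum_l \epsilon(n_l)^{2}$ pieces cancel, one obtains
$$
\sum_{l=1}^{L}\bigl(\phi_{\hat{d}_{VC}}(n_l)-\phi_{d_{VC}}(n_l)\bigr)^{2} \;\leq\; 2\sum_{l=1}^{L}\epsilon(n_l)\bigl(\phi_{\hat{d}_{VC}}(n_l)-\phi_{d_{VC}}(n_l)\bigr).
$$
Cauchy--Schwarz on the right side and division then yield
$$
\norm{\phi_{\hat{d}_{VC}}-\phi_{d_{VC}}}_{L}^{\,2} \;\leq\; \frac{4}{L}\sum_{l=1}^{L}\epsilon(n_l)^{2}.
$$
Because the $\epsilon(n_l)$ are independent with $E[\epsilon(n_l)^{2}]=\sigma^{2}/W$, one has $E\bigl[\sum_l \epsilon(n_l)^{2}\bigr]=L\sigma^{2}/W$, so Markov's inequality gives
$$
P\bigl(\norm{\phi_{\hat{d}_{VC}}-\phi_{d_{VC}}}_{L}\ge\delta\bigr) \;\leq\; P\Bigl(\tfrac{4}{L}\sum_{l=1}^{L}\epsilon(n_l)^{2}\ge\delta^{2}\Bigr) \;\leq\; \frac{4\sigma^{2}}{W\delta^{2}} \;=\; \mathcal{O}(1/W),
$$
which is the claim.

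The main obstacle is not this calculation but rather justifying the Gaussian hypothesis itself. What the hypothesis hides are two things: (i) that the inner bootstrap output $r_{b_{1},i}(n_l)$ is in fact centered at $\phi_{d_{VC}}(n_l)$, which requires $n\to\infty$ and $m\to\infty$ at rates for which Theorem \ref{Theo23and4} is tight rather than merely valid (this is where the ``suitable rates'' clause in the hypothesis is doing its work); and (ii) that averaging across the outer bootstrap renders $\hat{\xi}(n_l)$ approximately Gaussian, which is a CLT argument that one would formalize by checking moment conditions on $r_{b_{1},i}$. One could sharpen the $\mathcal{O}(1/W)$ rate to sub-exponential in $W$ by replacing Markov with a $\chi^{2}_{L}$ tail bound, but the stated conclusion does not require it. Finally, translating closeness of $\phi_{\hat{d}_{VC}}$ to closeness of $\hat{d}_{VC}$ itself uses that $d_{VC}\mapsto\phi_{d_{VC}}$ in \eqref{phidvc} is continuous and monotone, and therefore injective with continuous inverse on the compact interval $(1,M]$.
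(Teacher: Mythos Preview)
Your argument is correct and follows the same skeleton as the paper's proof: the least-squares basic inequality from the minimizing property of $\hat{d}_{VC}$, expansion and cancellation to isolate the cross term, Cauchy--Schwarz, the variance computation $E\|\epsilon\|_L^2=\sigma^2/W$, and Markov's inequality. The one point of departure is that after Cauchy--Schwarz you divide through by $\|\phi_{\hat{d}_{VC}}-\phi_{d_{VC}}\|_L$ to obtain $\|\phi_{\hat{d}_{VC}}-\phi_{d_{VC}}\|_L\le 2\|\epsilon\|_L$ directly, whereas the paper instead bounds $\|\phi_{\hat{d}_{VC}}-\phi_{d_{VC}}\|_L$ by the radius $R$ coming from the $\Phi_c(R)$ hypothesis before invoking Markov; your route is slightly tighter (the final constant is $4\sigma^2/\delta^2$ rather than carrying an extra $R^2$) and in fact makes the restriction to $\Phi_c(R)$ unnecessary for this step.
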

{\bf Remark:} 
\iffalse
Thus, pre-asymptotically, we get 
\begin{eqnarray}
P\left(\norm{\phi_{\hat{d}_{VC}} - \phi_{d_{VC}}}> \delta\right) \leq \frac{2^{2}\sigma^{2}}{\delta W}\sum_{s=1}^{S}2^{s} \nonumber
\end{eqnarray}
\fi
In fact, the $r_{b_{1},i}(n_l)$'s are only approximately independent $N\left(\phi_{d_{VC}}(n_l), \sigma^2\right)$. 
However, as $n$ increases they become closer and closer to being independent $N\left(\phi_{d_{VC}}(n_l), \sigma^2\right)$, assuming $\phi_{d_{VC}}(n_l)$ is a tight enough upper bound, as $n, m \rightarrow \infty$ 
at appropriate rates.    Also, it is seen that if $L=L(n)$ is increasing then $\| \cdot \|_L$ averages the evaluations
of of more and more components of, say, $\phi_{\hat{d}_{VC}}$.  In the limit, this can be exihibited as an
integral, i.e. as a quadratic norm.  So, $\| \cdot \|_L$ can be regarded as an approximation of a
$L^2$-space norm that strengthens as a norm (or inner product)
as $n \rightarrow \infty$.  In Theorem \ref{TheoConsis}, if we controlled the distance between
$\| \cdot \|_L$  and its limit, we would get a stronger mode for consistency.   

\begin{proof}
By definition of $\phi_{d_{VC}}$, we have 
\begin{eqnarray}
\label{eqmerlin}
\sum_{l = 1}^{L}\left(\hat{\xi}\left(n_{l}\right) - c\sqrt{\frac{\hat{d}_{VC}}{n_{l}}\log\left(\frac{2n_{l}e}{\hat{d}_{VC}}\right)}\right)^2\leq \sum_{l = 1}^{L}\left(\hat{\xi}\left(n_{l}\right) - c\sqrt{\frac{d_{VC}}{n_{l}}\log\left(\frac{2n_{l}e}{\hat{d}_{VC}}\right)}\right)^{2}
\end{eqnarray}
or more compactly $\norm{\hat{\xi} - \phi_{\hat{d}_{VC}}}_{L}^2 \leq \norm{\hat{\xi} - \phi_{d_{VC}}}_{L}^2$. Expanding both sides of \eqref{eqmerlin} gives
\begin{eqnarray}
\sum_{l =1}^{L}\left(\phi^{2}_{\hat{d}_{VC}}\left(n_{l}\right) - \phi_{d_{VC}}^{2}\left(n_{l}\right) \right) \leq 2\sum_{l=1}^{L}\hat{\xi}\left(n_{l}\right)\left(\phi_{\hat{d}_{VC}} -\phi_{d_{VC}}\left(n_{l}\right)\right) \nonumber
\end{eqnarray}
and hence 
\begin{eqnarray}
\norm{\phi_{\hat{d}_{VC}}}^{2}_{L} - \norm{\phi_{d_{VC}}}^{2}_{L}
&\leq&
\frac{2}{L}\sum_{l=1}^{L}\left(\phi_{d_{VC}}\left(n_{l}\right) + \epsilon\left(n_{l}\right)\right)\left(\phi_{\hat{d}_{VC}}\left(n_{l}\right) - \phi_{d_{VC}}\left(n_{l}\right)\right)\nonumber \\
&=&
\frac{2}{L}\sum_{l=1}^{L}\left(\phi_{d_{VC}}\left(n_{l}\right)\phi_{\hat{d}_{VC}}\left(n_{l}\right) -\phi^{2}_{d_{VC}}\left(n_{l}\right) \right.   \nonumber \\
 && \quad \quad \quad +
 \left. \epsilon\left(n_{l}\right)\left(\phi_{\hat{d}_{VC}}\left(n_{l}\right) - \phi_{d_{VC}}\left(n_{l}\right)\right)\right) \nonumber
\end{eqnarray}
Rearranging gives
\begin{eqnarray}
\norm{\phi_{\hat{d}_{VC}}}_{L}^{2} - 2\langle\epsilon, \phi_{\hat{d}_{VC}}\rangle + \norm{\phi_{{d}_{VC}}}^{2}_{L} \leq 2\langle \epsilon, \phi_{d_{VC}} - \phi_{\hat{d}_{VC}}\rangle_{Q}\nonumber
\end{eqnarray}
where $\epsilon = \left(\epsilon\left(n_{1}\right), \ldots, \epsilon\left(n_{L}\right)\right)$, i.e.
\begin{eqnarray}
\norm{\phi_{\hat{d}_{VC}} - \phi_{d_{VC}}}^{2}_{L} \leq 2\langle \epsilon, \phi_{\hat{d}_{VC}} - \phi_{d_{VC}}\rangle_{Q}.
\end{eqnarray}
It is seen that the LHS is the main quantity we want to control. We have
\begin{eqnarray}
\label{three}
P\left(\norm{\phi_{\hat{d}_{VC}}-\phi_{d_{VC}}}_{L} > \delta \right)
% \nonumber \\
&\leq&
P\left(\langle \epsilon, \phi_{d_{VC}} - \phi_{\hat{d}_{VC}}\rangle \ge \frac{\delta^2}{2}\right)
 \nonumber \\
&\leq& P\left(\norm{\epsilon}_{L}\norm{\phi_{d_{VC}} - \phi_{\hat{d}_{VC}}}_L >\frac{\delta^2}{2}\right)
 \nonumber  \\
&\leq &\frac{2R^2}{\delta^2} E\norm{\epsilon}^{2}_{L}
%\nonumber\\
%&=&  \sum_{s=0}^{S}P\left(\left\{\langle \epsilon, \phi_{d_{VC}} - \phi_{\hat{d}_{VC}}\rangle >
% \frac{\delta^2}%{2}\right\}\bigcap \left\{\phi_{\hat{d}_{VC}}\in R\left(2^{s+1}\delta\right)\right\}\right)\nonumber\\
%&\leq&\sum_{s=0}^{S}P\left(\norm{\epsilon}_{Q}\norm{\phi_{d_{VC}} - \phi_{\hat{d}_{VC}}}>\frac{\delta^2}{2}\bigcap %\left\{\phi_{\hat{d}_{VC}}\in R\left(2^{s+1}\delta\right)\right\}\right),
\end{eqnarray}
using the Cauchy-Schwarz inequality, the bound in \eqref{Rbound}, and Markov's inequality.   

%Now, using the restriction to the $s$-th ring, each summand has an upper bound of the form
%\begin{eqnarray}
%\label{quatre}
%P\left(\norm{\epsilon}_{Q}\geq \frac{\delta}{2^{s+2}}\right) \leq \frac{2^{s+1}}{\delta}E\norm{\epsilon}^{2}_{Q}
%\end{eqnarray}
%by Markov's inequality. 
By construction, we have that
\begin{eqnarray}
\label{five}
E\norm{\epsilon}^{2}_{L} &=& \frac{1}{L}\sum_{l=1}^{L}E\left(\epsilon^{2}\left(n_l\right)\right)\nonumber\\
&=& \frac{1}{L}\sum_{l=1}^{L}E\left[\left(\frac{1}{W}\sum_{i=1}^{W}r_{b_{1},i}(n_l)\right) - \phi(n_{l})\right]^2\nonumber\\
&=& \frac{1}{L}\sum_{l=1}^{L}E\left[\frac{1}{W}\sum_{i=1}^{W}\left(r_{b_{1},i}(n_l)-\phi(n_l)\right)\right]^2\nonumber\\
&=&\frac{1}{LW^2}\sum_{l=1}^{L}\sum_{i=1}^{W}E\left(r_{b_{1},1}(n_l) - \phi(n_l)\right)^2\nonumber\\
&=&\frac{1}{LW^2}\sum_{l=1}^{L}\sum_{i=1}^{W}Var\left(r_{b_{1},1}(n_l)\right)\nonumber\\
&=& \frac{\sigma^2}{W}
\end{eqnarray}
Using \eqref{five} in \eqref{three} gives
\begin{eqnarray}
P\left(\norm{\phi_{\hat{d}_{VC}} - \phi_{d_{VC}}}_L\ge \delta\right) \leq \frac{2R^2 \sigma^2}{\delta^2 W} %\sum_{s=0}^{S}2^{s}
\end{eqnarray}
in which the upper bound decreases as $n$ increases because $W(n)$ is
increasing, thereby giving \eqref{deuxprime}.
\end{proof}
Note this proof allows $S$ to increase provided the rate of increase is slow enough 
with $n$ i.e., there exist sequences so that we can set $S = S(n)$ and still retain \eqref{deuxprime}.

A notable difference between \eqref{deuxprime} and the corresponding theorem in \cite{McDonald:etal:2011} is that our simplified result effectively only gives
\begin{eqnarray}
P\left(\norm{\phi_{\hat{d}_{VC}} - \phi_{d_{VC}}}_{L}\ge \delta\right) = \mathcal{O}\left(\frac{1}{W}\right)
\end{eqnarray}
rather than $\mathcal{O}\left(e^{-\gamma W}\right)$ for some $\gamma>0$, a much faster rate. 
We conjecture that the more sophisticated techniques used in \cite{McDonald:etal:2011} could 
be adapted to our setting and thereby give an exponentially fast rate of convergence of 
$\hat{d}_{VC}$ to $d_{VC}$ in probability. However, as yet, we have not been 
able to show this. Also, although it is suppressed in the notation, our result implicitly 
requires $m\rightarrow \infty$ to justify the use of $\phi_{d_{VC}}$.

\iffalse
As a consequence, we can show that our $\hat{d}_{VC}$ is consistent. Suppose that $\phi_{d_{VC}}(\cdot)$ is Lipschitz i.e. $\forall n, \exists \kappa = \kappa(n)$ so that $\kappa(n)\left|d_{VC}-d_{VC}^{'}\right|\leq \left|\phi_{d_{VC}}\left(n_{l}\right)-\phi_{d_{VC}^{'}}\left(n_{l}\right)\right|$, where $\kappa(n)$ is bounded on compact sets. Since the form of $\phi_{d_{VC}}\left(n_{l}\right)$ is known from \eqref{UPPTHEO6}, it is clear that the uniform Lipschitz condition we have assumed actually holds at least for appropriately chosen compact sets. We also observed that for $c\in \mathbb{C}$ there exists a neighborhood $B\left(c,\epsilon_{l}\right)$ on which \eqref{concistheo} is true. Cover $\mathbb{C}\times \mathbb{H}$ by balls of the form   $B\left(c,\epsilon_{l}\right)\times \left\{d_{VC}\right\}$ finitely many will be enough since $\mathbb{C}\times \mathbb{H}$ is compact.
\fi

Using Theorem \ref{TheoConsis}, we can show that our $\hat{d}_{VC}$ is consistent. Suppose that $\phi_{d_{VC}}(\cdot)$ is Lipschitz i.e. $\forall n_{l}, \exists \kappa = \kappa\left(n_{l}\right)$ so that $\kappa(n)\left|d_{VC}-d_{VC}^{'}\right|\leq \left|\phi_{d_{VC}}\left(n_{l}\right)-\phi_{d_{VC}^{'}}\left(n_{l}\right)\right|$, where $\kappa(n)$ is bounded on compact sets. Since the form of $\phi_{d_{VC}}\left(n_{l}\right)$ is known from \eqref{UPPTHEO6}, it is clear that the uniform Lipschitz condition we have assumed actually holds at least for appropriately chosen compact sets. We also observe that for $c\in I$ there exists a neighborhood $B\left(c,\epsilon_{l}\right), \eta>0$, on which \eqref{deuxprime} is true. Cover $I\times \mathbb{H}$ by sets of the form   $B\left(c,\eta\right)\times \left\{d_{VC}\right\}$; finitely many will be enough since $I\times \mathbb{H}$ is compact.

\begin{theorem}
\label{Theo7}
  Given that the assumptions of Theorem \ref{TheoConsis} hold and that $\phi_{d_{VC}}(\cdot)$ is Lipschitz, we have, as $n \rightarrow \infty$, that
  \begin{equation}\label{consistencyresult}
    P\left(\left|\hat{d_{VC}} - d_{VC}\right|\geq \delta\right)
 \leq \frac{2 R^2 \sigma^2}{\delta^2\kappa W}\sum_{s=1}^{S}2^s = {\cal{O}}\left(\frac{1}{W}\right),~
as ~ W_n \rightarrow \infty,
  \end{equation}
  where $\kappa = \sqrt{\frac{1}{L}\sum_{l=1}^{L}\kappa\left(n_{l}\right)}$ is the overall Lipshitz constant.
\end{theorem}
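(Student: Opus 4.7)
The plan is to reduce the event $\{|\hat{d}_{VC} - d_{VC}| \ge \delta\}$ to an event of the form $\{\|\phi_{\hat{d}_{VC}} - \phi_{d_{VC}}\|_L \ge (\text{const})\cdot\delta\}$ already controlled by Theorem \ref{TheoConsis}, by exploiting the pointwise Lipschitz hypothesis on the map $d_{VC} \mapsto \phi_{d_{VC}}(n_l)$. The residual work consists of (a) aggregating the local Lipschitz constants $\kappa(n_l)$ into a single constant compatible with the empirical norm $\|\cdot\|_L$, and (b) handling the outer optimization over $c$ via a compactness/covering argument, since Theorem \ref{TheoConsis} was stated for a fixed $c$.

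\paragraph{Reduction.} For the first step, I would apply the hypothesis $\kappa(n_l)|d_{VC} - d_{VC}'| \le |\phi_{d_{VC}}(n_l) - \phi_{d_{VC}'}(n_l)|$ with $d_{VC}' = \hat{d}_{VC}$, square it pointwise, and average over $l = 1, \ldots, L$ to obtain
$$\Bigl(\tfrac{1}{L}\sum_{l=1}^L \kappa(n_l)^2\Bigr)\, |\hat{d}_{VC} - d_{VC}|^2 \;\le\; \|\phi_{\hat{d}_{VC}} - \phi_{d_{VC}}\|_L^2 .$$
Absorbing the coefficient into the overall Lipschitz constant $\kappa$ of the statement, this reads $\kappa^2 |\hat{d}_{VC} - d_{VC}|^2 \le \|\phi_{\hat{d}_{VC}} - \phi_{d_{VC}}\|_L^2$, so the inclusion $\{|\hat{d}_{VC} - d_{VC}| \ge \delta\} \subseteq \{\|\phi_{\hat{d}_{VC}} - \phi_{d_{VC}}\|_L \ge \kappa\delta\}$ holds. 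Invoking Theorem \ref{TheoConsis} at the inflated radius $\kappa\delta$ then yields
$$P(|\hat{d}_{VC} - d_{VC}| \ge \delta) \;\le\; P(\|\phi_{\hat{d}_{VC}} - \phi_{d_{VC}}\|_L \ge \kappa\delta) \;\le\; \frac{2 R^2 \sigma^2}{\kappa^2 \delta^2 W},$$
which is $\mathcal{O}(1/W)$ as $W = W(n) \rightarrow \infty$.

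\paragraph{Uniformization over $c$ and the expected obstacle.} Since Theorem \ref{TheoConsis} fixes $c$, I would next cover the compact set $I \times H$ by finitely many sets of the form $B(c,\eta) \times \{d_{VC}\}$, control $\|\phi_{\hat{d}_{VC},c} - \phi_{\hat{d}_{VC}, c'}\|_L$ for $c'$ within the ball (immediate from the explicit form of $\phi_{d_{VC},c}$, which is linear in $c$), and take a union bound. The number of covering balls is $n$-independent, so the overall rate is unchanged. The principal obstacle I anticipate is reconciling the $\sum_{s=1}^S 2^s$ factor in the announced bound with the single-step Cauchy--Schwarz/Markov argument of Theorem \ref{TheoConsis}: that sum is a vestige of a chaining argument over dyadic rings around $\phi_{d_{VC}}$ of radius $2^{-s}R$, and if chaining is threaded through the Lipschitz reduction each ring contributes a term of order $2^s/(\delta^2 W)$. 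Summing up to $S = S(n)$ reproduces the constant in \eqref{consistencyresult}; the delicate point is choosing $S(n)$ to grow slowly enough relative to $W(n)$ that $\sum_s 2^s/W(n) \to 0$, which is the same rate constraint already flagged after Theorem \ref{TheoConsis}.
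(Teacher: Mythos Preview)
Your proposal is correct and mirrors the paper's argument: aggregate the pointwise Lipschitz bounds into the $\|\cdot\|_L$ norm to obtain the inclusion $\{|\hat{d}_{VC} - d_{VC}| \ge \delta\} \subseteq \{\|\phi_{\hat{d}_{VC}} - \phi_{d_{VC}}\|_L \ge \kappa\delta\}$, then invoke Theorem~\ref{TheoConsis} at the inflated radius. Your suspicion about the $\sum_{s=1}^S 2^s$ factor is well-founded---it is a vestige of a chaining argument that the paper's final proof of Theorem~\ref{TheoConsis} does not actually use (the Cauchy--Schwarz/Markov step there yields $2R^2\sigma^2/(\delta^2 W)$ with no sum), and your $\kappa^2$ in the denominator is what the substitution $\delta \mapsto \kappa\delta$ genuinely produces, whereas the paper records only $\kappa$.
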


\begin{proof}
  Since all $L$ of the $\phi_{d_{VC}}\left(n_{l}\right)$'s are at least locally Lipschitz, their local Lipschitz inequlities can be summarized by an inequality of the form 
  \iffalse
  \begin{equation}\label{lipeq}
    \kappa\left(n_{l}\right)|d_{VC}-d_{VC}^{'}| \leq \left|\phi_{d_{VC}}\left(n_{l}\right)-\phi_{d_{VC}'}\left(n_{l}\right)\right|.
  \end{equation}
  So, using \eqref{lipeq} we have
  \fi
  \begin{equation}\label{blip}
    \begin{split}
       \left|d_{VC}-d_{VC}'\right|\sqrt{\frac{1}{L}\sum_{l=1}^{L}\kappa\left(n_{l}\right)} & \leq \sqrt{\frac{1}{L}\sum_{l=1}^{L}\left(\phi_{d_{VC}}\left(n_{l}\right)-\phi_{d_{VC}^{'}}\left(n_{l}\right)\right)^2} \\
         & =\|\phi_{d_{VC}}\left(n_{l}\right)-\phi_{d_{VC}^{'}}\left(n_{l}\right)\|_L,
    \end{split}
  \end{equation}
  where $d_{VC}$ is the true value and $d^{'}_{VC}$ is any other value in $\mathbb{H}$, and any extra constant from the local Lipschitz factors are assumed to have been absorbed into the $\kappa\left(n_{l}\right)$'s as needed.
  Let $\kappa = \sqrt{\frac{1}{L}\sum_{l=1}^{L}\kappa\left(n_{l}\right)}$. Using Theorem \ref{TheoConsis}, and \eqref{blip} we have
  \begin{equation}\label{ccprofend}
    \begin{split}
       P\left(\left|\hat{d}_{VC}-d_{VC}\right|\geq \delta\right) & \leq P\left[\|\phi_{\hat{d}_{VC}}\left(n_{l}\right)-\phi_{d_{VC}}\left(n_{l}\right)\|_{L}\geq \delta \kappa\right] \\
         & \leq \frac{2R^2\sigma^2}{\kappa \delta^2 W}\sum_{s=1}^{S}2^{s},
    \end{split}
  \end{equation}
where the last upper bound decreases as $W=W_n \rightarrow\infty$ as $n \rightarrow \infty$,  giving \eqref{consistencyresult}.
\end{proof}

\section{Simulation Studies}
\label{chap:Numerical:Studies}

For any model, we can evaluate the LHS of  \eqref{objfn} from Theorem \ref{Theo23and4} by Algorithm $\# 1$ in Sec. \ref{Algo1}. Then, we can use nonlinear regression in \eqref{objective} to find $\hat{d}_{VC}$. So, it is seen that $\hat{d}_{VC}$ is a function of the conjectured model. In principle, for any given model class, the VC dimension can be found, so our method can be applied.

Since our goal is to estimate the true VC dimension, when a conjectured model $P\left(\cdot\mid\beta\right)$ is linear and correct, we expect $VC\left(P\left(\cdot\mid\beta\right)\right)\cong\hat{d}_{VC}$. By the same logic, if $P\left(\cdot\mid\beta\right)$ is far from the true model, we expect $VC\left(P\left(\cdot\mid\beta\right)\right)\gg
\hat{d}_{VC}$ or $VC\left(P\left(\cdot\mid\beta\right)\right)\ll
\hat{d}_{VC}$. This suggests we estimate $d_{VC}$ by seeking
\begin{equation}
\label{Consis}
\hat{d}_{VC} = \arg\min_{k}\left|VC\left(P_{k}\left(\cdot\mid\beta\right)\right)- \hat{d}_{VC, k}\right|\leq t,
\end{equation}
where $\left\{P_{k}\left(\cdot\mid\beta\right)|k = 1,2,\cdots, K\right\}$ is some set of models and $\hat{d}_{VC, k}$ is calculated using model $k$, $t$ is a positive and usually small number that such that $t\leq 2$.
In the case of linear model, with $q = 1,2,\cdots, Q$ explanatory variables, we get
\begin{equation}
\label{eqcon}
\hat{d}_{VC} = \arg\min_{q}\left|q - \hat{d}_{VC, q}\right| \leq t
\end{equation}
where $\hat{d}_{VC, q}$ is the estimated VC dimension for model of size $q$.
Note that \eqref{Consis} can identify a good model even when consistency fails. The reason is that \eqref{Consis} only requires a minimum at the VC dimension not convergence to the true VC dimension which may be any model under consideration.
Here, we use a variation on \eqref{eqcon} by choosing the smallest local minimum of $\left|q-\hat{d}_{VC, q}\right|$, effectively setting $t=0$.
In practice, this does not always give a unique model and this may not be desireable.   On the other hand, 
choosing $t>0$ can give a collection of models, which may be desireable in some settings.

Our simulations are based on linear models, since for these we know the VC dimension equals the number of parameters in the model, see \cite{anthony2009neural}. To establish notation, we write the regression function as a linear combination of the covariates $X_{j}$, $j=0,1,\cdots, p$,
$$
y = f(x,\beta) = \beta_{0} + \beta_{1}x_{1}+\beta_{2}x_{2}+\cdots+\beta_{p}x_{p}=\sum_{j=0}^{p}\beta_{j}x_{j}.
$$
Given a dataset, $\left\{\left(x_{i},y_{i}\right),~ i =1,2,\ldots,n\right\}$, the matrix representation is

$$
Y = X\beta + \epsilon 
$$
where $Y$ is the $n\times1$ vector of response values, $X$ is the $n\times (1+p)$ matrix with rows $\left(1, x_{1,1}, x_{1,2},\ldots, x_{1,p}\right)$, $\beta = \left(\beta_{0},\beta_{1}, \ldots, \beta_{p}\right)^T$
is the vector of model parameters, and $\epsilon = \left(\epsilon_1, \epsilon_2,\ldots,\epsilon_n\right)^T$ is a $n\times 1$ mean zero Gaussian random vector.
Now, the least squares estimator $\widehat{\beta}$ is given by
$
\widehat{\beta} = \left(X^{'}X\right)^{-1}X^{'}Y.
$

Our simulated data is analogous. We write
$$
Y = \beta_{0}x_{0} + \beta_{1}x_{1} + \beta_{2}x_{2} + \cdots + \beta_{p}x_{p} + \epsilon
$$
$$
\quad \hbox{where} ~ \epsilon ~ \sim N(0,\sigma_{\epsilon}=0.4)\quad x_{0} = 1, ~ \beta_{j}\sim N(\mu = 5, \sigma_{\beta} = 3), ~ \hbox{for $j=1,2,\cdots p$,}
$$
$$
 \quad x_{j} \sim N(\mu=5, \sigma_{x} = 2),
$$ 
in which all $\beta$'s, $x$'s and $\epsilon$'s are independently generated.
\noindent
We center and scale all our variables, including the response. Initially, we use a nested sequence of model lists. If our covariates were highly correlated, before applying our method we could de-correlate them by sphering, i.e. transforming
the covariates using their covariance matrix so they become approximately uncorrelated with variance one, see \cite{Murphy2012} p. 144.

%%%%%%%%%%%%%%%%%%%%%%%%%%%%%%%%%%%%%%%%%%%%%%%%%%%%%%%%%%%%%%%%%%%%

\subsection{Analysis of Synthetic Data}
\label{Simulation}
In Subsec. \ref{PropoWorks}, we begin by presenting simulation results to verify our estimator for VC dimension is consistent for the VC dimension of the true model. Of course, since our results are only simulations, we do not always get perfect consistency; sometimes our $\hat{d}_{VC}$ is off by one. (As we note later, this can often be corrected if the sample size is larger.)
In Subsec. \ref{Dependency}, we will look at simulations where results do not initially appear to be consistent with the theory. However, we show that for larger values of $p$, larger values of $n$ are needed. Also, as $p$ increases, we must choose $n_{l}$'s that are properly spread out over $[0,n]$.
and these $n_{l}$'s seem to matter less as $n\rightarrow \infty$. We suggest this is necessary because \eqref{objfn} is only an upper bound that conjecture tightens as $n$ increases. 

\subsubsection{Some first examples}
\label{PropoWorks}
In this subsection, we implement simulations for model sizes $p = 15, 30, 40, 50 ~ \hbox{and}~ 60$ and we present the results for all these cases for six model selection techniques AIC, BIC, CV $\widehat{ERM}_{1}$, $\widehat{ERM}_2$ and VC dimension (VCD).
For $p = 15, 30$, we use a sample size of $n=400$.  The design points are 
$N_{L} = \left\{50, 100, 150, 200, 250, 300, 400\right\}$; $m=10$; and the number of bootstrap samples is $W=50$. 
For $p = 40, 50 ~\hbox{and}~60$, we use a sample size of $n=600$.  The design points are 
$N_{L} = \left(75, 150, 225, 300, 375, 450, 525, 600\right)$; $m=10$; and the number of bootstrap 
samples is $W=50$. For these cases, we fit two sets of models; the first set uses a 
subset of our covariates to estimate the VC dimension, and in the second set, we added some `decoys' (their  
$\beta$'s in the generation of the responses are zeros).
Outputs of these simulations are given in Figs. \ref{erm15}-\ref{erm60}, in which
$\widehat{ERM}_{1}$, $\widehat{ERM}_{2}$, VCD, AIC, BIC, and CV,  are used to identify a good model.
\noindent
\begin{figure}
	\centering
    \includegraphics[width = \textwidth]{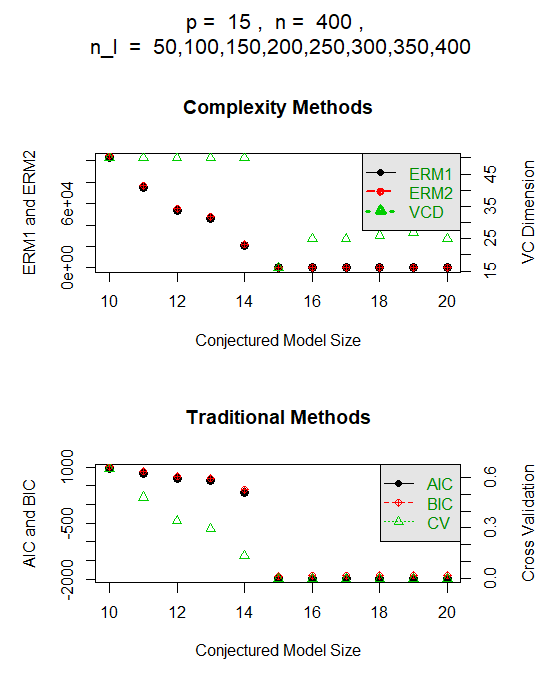}
    \caption{Upper: Values of $\widehat{ERM}_{1}$, $\widehat{ERM}_{2}$, and VC dimension. Lower: Values of AIC, BIC, and CV for $p=15$, $\sigma_{\epsilon}=0.4$, $\sigma_{\beta}=3$, $\sigma_{x} = 2$}
    \label{erm15}
\end{figure}

\begin{figure}
        \centering
        \includegraphics[width = \textwidth]{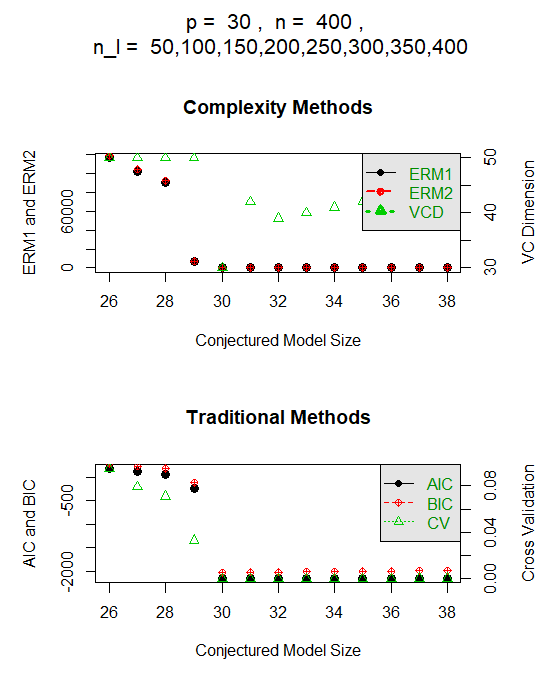}
    \caption{Upper: Values of $\widehat{ERM}_{1}$, $\widehat{ERM}_{2}$, and
    VC dimension (VCD). Lower: AIC, BIC, and CV values for $p=30$, $\sigma_{\epsilon}=0.4$, $\sigma_{\beta}=3$, $\sigma_{x} = 2$}
    \label{erm30}
\end{figure}

\begin{figure}
	\centering
    \includegraphics[width = \textwidth]{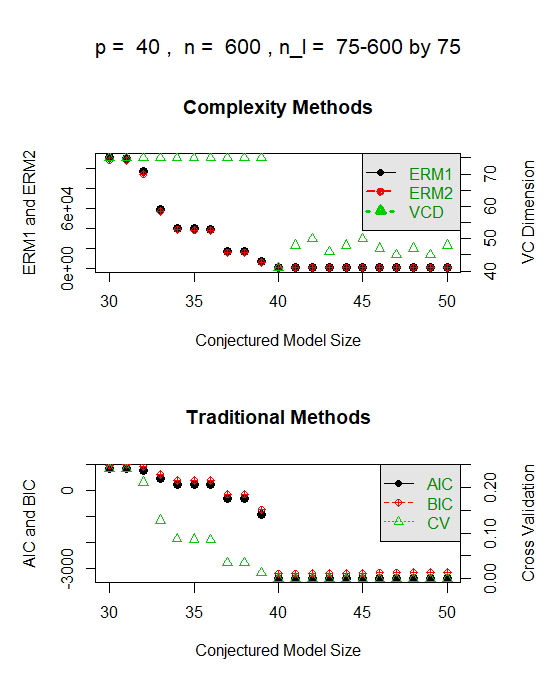}
    \caption{Upper: Values of $\widehat{ERM}_{1}$, $\widehat{ERM}_{2}$ and VC dimension (VCD). Lower: Values of AIC, BIC, and CV, for $p=40$, $\sigma_{\epsilon} = 0.4$, $\sigma_{\beta}=3$, $\sigma_{x} = 2$}
    \label{erm40}
\end{figure}

\noindent

\begin{figure}
	\centering
    \includegraphics[width = \textwidth]{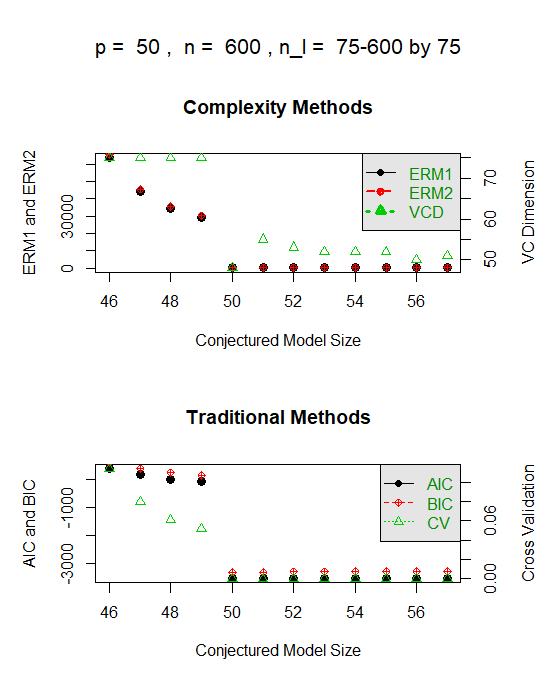}
    \caption{Upper: Values of  $\widehat{ERM}_{1}$, $\widehat{ERM}_{2}$, and VC dimension (VCD). Lower: Values of AIC, BIC, and CV for $p=50$, $\sigma_{\epsilon} = 0.4$, $\sigma_{\beta}=3$, $\sigma_{x} = 2$}
    \label{erm50}
\end{figure}

\begin{figure}
	\centering
    \includegraphics[width = \textwidth]{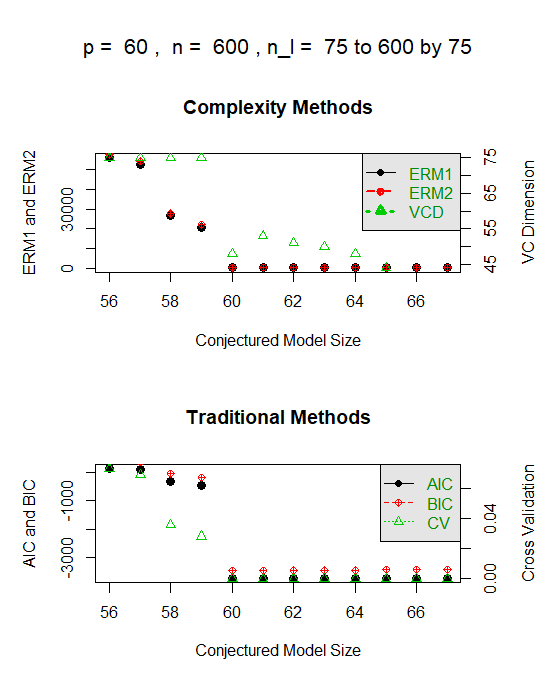}
    \caption{Upper: Values of $\widehat{ERM}_{1}$, $\widehat{ERM}_{2}$, and VC dimension (VCD). Lower: Values of AIC, BIC, and CV for $p=60$, $\sigma_{\epsilon} = 0.4$, $\sigma_{\beta}=3$, $\sigma_{x} = 2$}
    \label{erm60}
\end{figure}

By examining Figs \ref{erm15} to \ref{erm60},
%\ref{AdjTableP15} to \ref{AdjTableP60},
we see that, for each given true model of pre-specified size, we fitted a list of nested models. 
When the size of the conjectured model is strictly less than that of the true model, the estimated VC dimension equals the minimum value of the design points, and the values of the AIC, BIC, CV, $\widehat{ERM}_{1}$, $\widehat{ERM}_{2}$  are high. These latter values typically decrease as the conjectured models become similar to the true model. For this range of model sizes, when the conjectured model exactly matches the true model, the estimated VC dimension ($\hat{d}_{VC}$) is closest to the true value. Except for Fig. \ref{erm60}, the biggest discrepancy (of size 2) occurs for $p=50$; by contrast, for every other case the difference between the true value and the estimated VC dimension is at most one.  Correspondingly, when the size of the conjectured of the true model is above the size of the true model, all six methods increase. The difference between our method (VCD) and $\widehat{ERM}_1$, $\widehat{ERM}_2$, AIC, BIC and CV is that the VCD values are much higher than the other values and tend to increase.  Thus, they more decisively
indicate which model should be taken as true.
\iffalse
In Figs. \ref{erm15}-\ref{erm50},
%\ref{AdjTableP15}-\ref{AdjTableP50},
the qualitative behaviour of $\widehat{ERM}_{1}$, $\widehat{ERM}_{2}$ or BIC is the same. In fact, when the conjectured model is a subset of the true model, we see a consistent and substantial decrease of $\widehat{ERM}_{1}$, $\widehat{ERM}_{2}$ or BIC, and a sudden drop in these statistics when the conjectured model perfectly matches the true model. This sudden drop can be regarded as an indicator of the true model. After this point, $\widehat{ERM}_{1}$ usually flatlines. However, $\widehat{ERM}_{2}$ can still often has smallest value occur at the sudden drop because, $\widehat{ERM}_{2}$ often increases (albeit slowly) from its minimum as $p$ increases. We also see that BIC has good power of discrimination since its smallest values occur at the true model. As we add decoys, the values of BIC are bigger than those of the true model, although sometimes not by much.
\fi

The smallest discrepancy between the size $d_{VC}$ of the model and $\hat{d}_{VC}$ usually occurs at the true model. This indicates that $\hat{d}_{VC}$ is consistent.
In addition, even though the VCD values generally increase as the size of the conjectured 
model exceeds the size of the true model, in some cases, past a certain value $d_{VC}$, the VCD 
value may flatline as well. The problem with flatlining or even decreasing past a 
certain value of $d_{VC}$ occurs mostly when $n$ is not large enough relative to $p$.

In Figure \ref{erm60}, the behaviour of all six model selection methods is qualitatively the same, however, $\hat{d}_{VC}$ is far from the true value.
%Table \ref{AdjTableP60},
We suggest that this discrepancy occurs because the sample size is too small compared to $p$ and the choice of the design points is poor. Indeed, we suggest that in practice the effect of poorly chosen of design points is exacerbated when $n$ is not large enough. For instance, in Fig. \ref{ERM60700}, we see that $n = 700$ as opposed to $n = 600$,
 and very similar design points, the VCD performs better, rising when the conjectured models are too large.

\iffalse
For the present, we note that Figure \ref{erm60} gives us estimates of the true model using $\widehat{ERM}_{1}$ and $\widehat{ERM}_{2}$ when the sample size is $N=600$, and $N_{L}$ takes on values from $75$ to $600$ in steps of size $75$. We observe that $\widehat{ERM}_{1}$ and $\widehat{ERM}_{2}$ have a very low power of discrimination between models. Later, in Figure \ref{ERM602000} where the sample size is $N = 2000$ and $N_{L} = 500, 700, 1000, 1500, 2000$,
%Table \ref{AdjTableP60_2000},
we see that after the sudden drop in the estimate of $\widehat{ERM}_{1}$ and $\widehat{ERM}_{2}$, (that is an indicator of the true model), that $\widehat{ERM}_{2}$ tends to discriminate better than $\widehat{ERM}_{1}$.
\fi
\begin{table}[ht]
  \centering
\begin{tabular}{cccccccc}
  %\hline
  % after \\: \hline or \cline{col1-col2} \cline{col3-col4} ...
   & Fig. \ref{erm15} & Fig. \ref{erm30}  & Fig. \ref{erm40} & Fig. \ref{erm50} & Fig. \ref{erm60} & Fig. \ref{ERM70} & Fig. \ref{ERM60700}\\
  %\hline\hline
  \midrule
 $\frac{n}{p}$ & 27 & 13 & 15 &   12 & 10 & 10 & 12 \\
 %\hline
\end{tabular}
  \caption{Increase of the sample size relative to the size of the true model for Figs. \ref{erm15}--\ref{ERM60700}}\label{NoverP}
\end{table}
Table \ref{NoverP} gives the ratio of the sample size to the size of the model for Fig. \ref{erm15}--\ref{ERM60700}. 
Overall, we see that the higher $\frac{n}{p}$ is, the better the discrimination of $\hat{d}_{VC}$ over models is. The values of $\frac{n}{p}$ for Figs. \ref{ERM70} and \ref{ERM60700} merit some explanations. First as a  general rule, ones wants $\frac{n}{p} \geq 10$ for good parametric inference. Here, we are doing model selection as well as parametric inference, so we expect to require $\frac{n}{p}\geq 10$ for good inference. Examining Figs. \ref{erm60} and \ref{ERM70}, both of which have $\frac{n}{p} = 10$ we see that in the former inference poor while in the latter inference is adequate. We attribute this difference to the effect of having choosing better design points in Fig. \ref{ERM70}. Otherwise put, in some cases, intelligent choice of design points can compensate for insufficient sample size.

We argue that estimating VC dimension directly is better than using $\widehat{ERM}_{1}$ or $\widehat{ERM}_{2}$. There are several reasons. First, the computation of $\widehat{ERM}_{1}$ and $\widehat{ERM}_{2}$ requires $\hat{d}_{VC}$. It also requires a threshold $\eta$ be chosen (see Propositions \ref{subProp1} and \ref{subProp2}) and is more dependent on $m$ than $\hat{d}_{VC}$ is. Being more complicated than $\hat{d}_{VC}$, $\widehat{ERM}_{1}$, $\widehat{ERM}_{2}$ will break down faster than $\hat{d}_{VC}$. This is seen, for instance in tables of \cite{Merlin:etal:2017} Chap. 3 and the discussion there.
More generally, we argue that $\widehat{ERM}_{1}$, and $\widehat{ERM}_{2}$ break down faster than $\hat{d}_{VC}$ with increasing $p$, if the sample size is held constant. That is, $\widehat{ERM}_{1}$ and $\widehat{ERM}_{2}$ are less efficient than $\hat{d}_{VC}$. Results in \cite{Merlin:etal:2017} show that all of these conclusions are 
qualitatively the same if $\sigma_{\epsilon}$, $\sigma_{x}$ or $\sigma_{\beta}$ are varied.

Finally for this subsection, we reiterate our observation that in practice, when our VC dimension technique is used properly, it gives a well defined minimum as the estimator for $d_{VC}$. In particular, it is much more sensitive to over fit than the other five methods, thereby giving better sparsity of models.
 
 \subsubsection{Dependency on The Sample Size and Design Points}
\label{Dependency}

Our goal here is to show how we can improve the quality of our estimates by increasing $N$ and/or tuning the design points $n_l$. We started this in Sec. \ref{PropoWorks}, but now we want to provide some practical guidelines. We perform simulations using $N = 700$ and 2000 so as to get contrasting results. 
We use two values of $p$, 70 in Fig \ref{ERM70} and 60 in Figs. \ref{ERM60700}--\ref{ERM602000NL}.
% For $p=70$, the sample size is set to be $N = 700$, design points vary from $100$, to $700$, by $100$, everything else is still the same as in the previous subsection. To observe the effect of sample size and design points on $\hat{d}_{VC}$, $\widehat{EMR}_{1}$ and $\widehat{ERM}_{2}$ we set the sample size to be $N = 2000~$ and $N_{L} = 500, 700, 1000, 1500, 2000$ for either $p=60$ or $p=70 $ and keep everything else unchanged.

First consider Figs. \ref{erm60} and \ref{ERM70}. Both show the results of using six model selection techniques. When the size of the conjectured model is strictly less than the size of the true model, $\hat{d}_{VC}$ is equal to the smallest design point. However, when the conjectured model exactly matches the true model, $\hat{d}_{VC} \approx 50, 61$, respectively, underestimating $d_{VC}$.
Interestingly, if we simply look at the minimal VCD values they occur at conjectured models of size 60 and 70 respectively, the true values of $p$.
When the conjectured model is more complex than the true model, the VCD value is visibly higher than the VCD value for the true model. Our observation for the other five methods are as before i.e. they are less affected by the small sample size, but loosely speaking decrease and flatline.

\iffalse
Figs. \ref{erm60} and \ref{ERM70}
%Tables \ref{AdjTableP60} and \ref{AdjTableP70}
give estimates of  $\hat{d}_{VC}$, the upper bound of the true unknown risk using  Propositions \ref{subProp1} ($\widehat{ERM}_{1}$) and \ref{subProp2} ($\widehat{ERM}_{2}$), and the BIC as before, for small sample sizes $N=600, 700$ and their corresponding design points. When the size of the conjectured model is strictly less than the size of the true model, $\hat{d}_{VC}$ is equal to the smallest design point. However, when the conjectured model exactly matches the true model, $\hat{d}_{VC} \approx 50, 61$, respectively, underestimating $d_{VC}$. When the conjectured model is more complex than the true model, we see that $\hat{d}_{VC}$ still underestimates $d_{VC}$ in most cases. Our observations about $\widehat{ERM}_{1}$, $\widehat{ERM}_{2}$ and BIC remain the same as before.
\fi

Figure \ref{ERM60700} gives the plots for the six model selection methods for for $N = 700$ and $p=60$. Here, $\hat{d}_{VC} = 57$. This value is closer to the true value than $\hat{d}_{VC} = 50$ in Fig. \ref{erm60}. There are no qualitative changes to the other five methods. So, small changes in the design points can have large numerical effect when $N$ is too small.

Figs. \ref{ERM602000}--\ref{ERM602000NL} have $p=60$, $n = 2000$; the difference from one to the next is only
in the design points. The non-VCD method methods are qualitatively the same as in earlier figures. However, $\hat{d}_{VC} = 59, 50$ and 56, respectively. Clearly, the best model selection occurs in Fig. \ref{ERM602000}, in which the design points cover the region $[0,n]$. The second best occurs in Fig. \ref{ERM602000NL}. The worst occurs in Fig. \ref{ERM602000NL2}. So, we tentatively suggest that using more design points over a small range is not as good as using fewer design points over a larger range.

We leave the question of optimally choosing the design points as future work even though we also suggest
design points matter less as $n$ increases and
\begin{enumerate}
  \item Good choices of design points are spread over $[0,n]$;
  \item More design points should be in $[\frac{n}{2},n]$ than in $[0,\frac{n}{2}]$, but neither should be empty;
  \item  Good, resp. poor, choices of design points tend to remain good, resp. poor, as $n \rightarrow \infty$.
\end{enumerate}

\begin{figure}
	\centering
    \includegraphics[width = \textwidth]{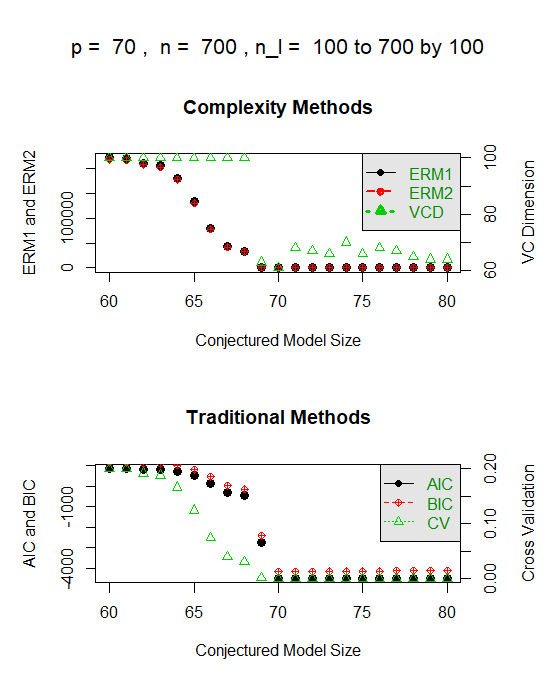}
    \caption{Upper: Values of $\widehat{ERM}_{1}$, $\widehat{ERM}_{2}$, and VC dimnsion. Lower: Values of AIC, BIC and CV  for $p=70$, $\sigma_{\epsilon} = 0.4$, $\sigma_{\beta}=3$, $\sigma_{x} = 2$}
        \label{ERM70}
\end{figure}

\begin{figure}
	\centering
    \includegraphics[width = \textwidth]{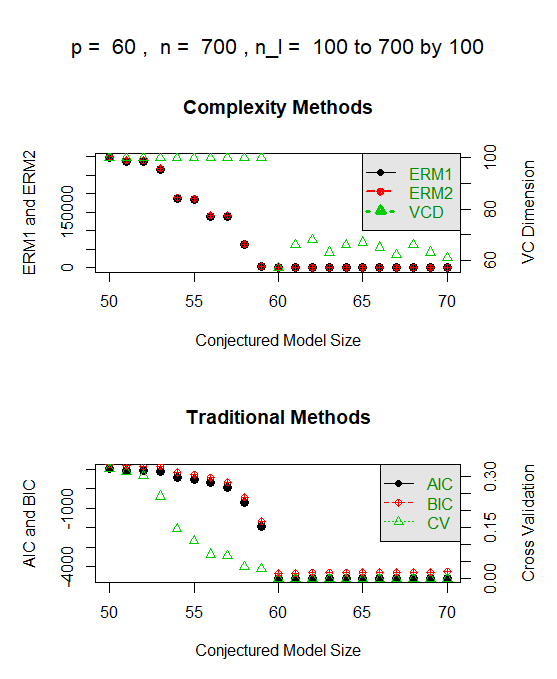}
    \caption{Upper: Values of $\widehat{ERM}_{1}$, $\widehat{ERM}_{2}$, and VC dimension. Lower: Values of AIC, BIC and CV  for $p=70$, $\sigma_{\epsilon} = 0.4$, $\sigma_{\beta}=3$, $\sigma_{x} = 2$}
        \label{ERM60700}
\end{figure}

\begin{figure}
	\centering
    \includegraphics[width = \textwidth]{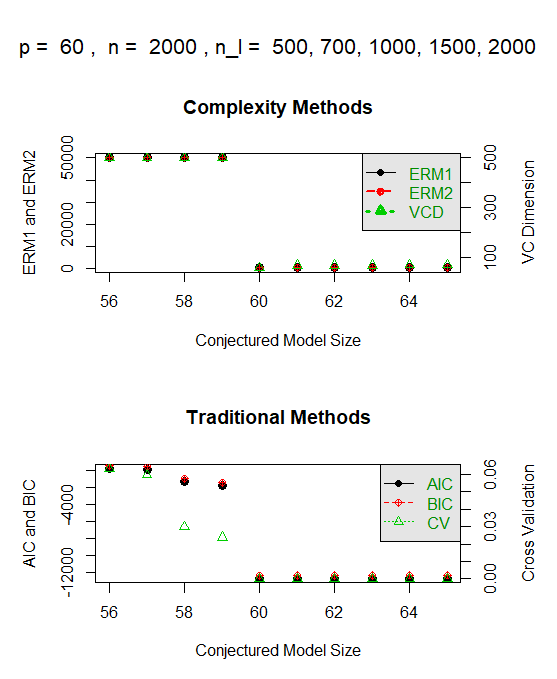}
        \caption{Upper: Values of $\widehat{ERM}_{1}$, $\widehat{ERM}_{2}$, and VC dimension. Lower: Values of  AIC, BIC and CV for $p=60$, $\sigma_{\epsilon} = 0.4$, $\sigma_{\beta}=3$, $\sigma_{x} = 2$}
        \label{ERM602000}
\end{figure}

\begin{figure}
	\centering
    \includegraphics[width = \textwidth]{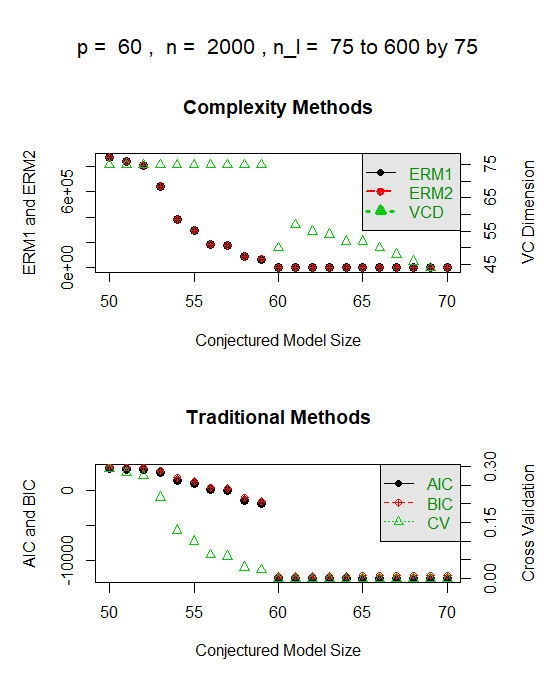}
        \caption{Upper: Values of $\widehat{ERM}_{1}$, $\widehat{ERM}_{2}$, and VC dimension. Lower: Values of  AIC, BIC and CV for $p=60$, $\sigma_{\epsilon} = 0.4$, $\sigma_{\beta}=3$, $\sigma_{x} = 2$}
        \label{ERM602000NL2}
\end{figure}

\begin{figure}
	\centering
    \includegraphics[width = \textwidth]{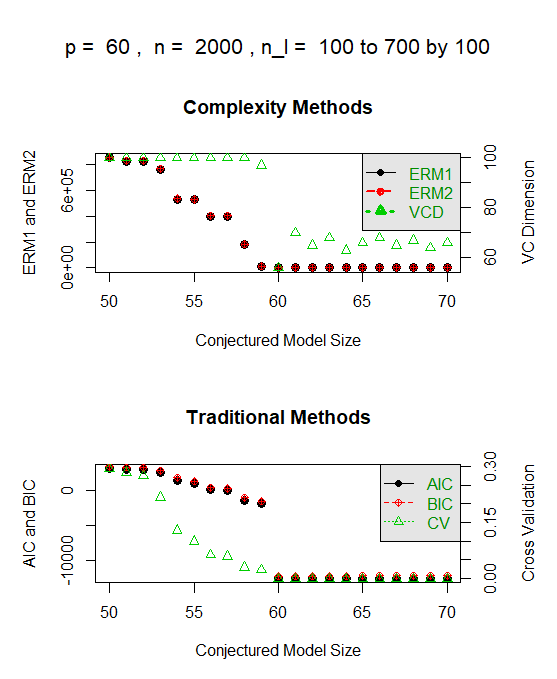}
        \caption{Upper: Values of $\widehat{ERM}_{1}$, $\widehat{ERM}_{2}$, and VC dimension. Lower: Values of AIC, BIC and CV for $p=60$, $\sigma_{\epsilon} = 0.4$, $\sigma_{\beta}=3$, $\sigma_{x} = 2$}
        \label{ERM602000NL}
\end{figure}

\section{Analysis of More Complex Data Sets}
\label{example}

The goal of this section is to evaluate our method on two benchmark datasets: {\sf {Tour de France}}\footnote{Tour de France Data was collected by Bertrand Clarke. More information can be found at  http://www.letour.fr/.} and {\sf{Abalone}} datasets. The analysis of the {\sf{Abalone}} dataset will be more extensive than that of the {\sf{Tour de France}} because its sample size is much larger, 4177 versus 103. Moreover, we can assume that the {\sf{Abalone}} dataset is independent across individual abalones.

We start this section by giving some information about our {\sf{Tour de France}} dataset in Sec. \ref{DescripStat}. Then, in Sec. \ref{AnalTour5} we analyze {\sf{Tour de France}} dataset using a model list based on $Year$ and $Distance$. This class is a sequence of models nested by SCAD.
 We evaluate our method by comparing $\hat{d}_{VC}$ to AIC, BIC, CV $\widehat{ERM}_{1}$ and $\widehat{ERM}_{2}$. In Subsec. \ref{AnalTourOutRemoved}, we look at the effect of outliers in the estimates $\hat{d}_{VC}$, $\widehat{ERM}_{1}$ and $\widehat{ERM}_{2}$. We present our analogous analysis of the {\sf{Abalone}} 
data set in Sec. \ref{Abalone}.

\subsection{{\sf{Tour de France Data}}}
\label{DescripStat}
The full dataset has $n=103$ data points. The data points are dependent (correlated) because many cyclists competed in the Tour for more than one year. Here we ignore the dependence structure because the correlations are small enough that the complication of accounting for them is not worthwhile. Each data point has a value of the response variable, the average speed in kilometers per hour (km/h) of the winner (Speed) of the Tour from 1903 to 2016. The explanatory variables are the Year (Y), the Distance (km) (D) of the Tour.  However, during World Wars 1 and 2 there was no {\sf{Tour de France}}, so we do not have data points for those years. We also see the effect of World War I on the speed of the winner of the tour:
The lowest speeds were in the years just after World War I, probably due to casualties. After World War II, there was also a decrease in average winning speed, but the decrease was less than that after World War I. 
Looking at Fig. \ref{TourdeFrancedata} we see a curvilinear relationship between Speed and Year. 
Although not shown here, there is a roughly linear relationship between Speed and Distance 
and that the variability of Speed increases with the Distance (D).

\begin{figure}[H]
	\centering
    \includegraphics[width = 15cm, height = 7cm]{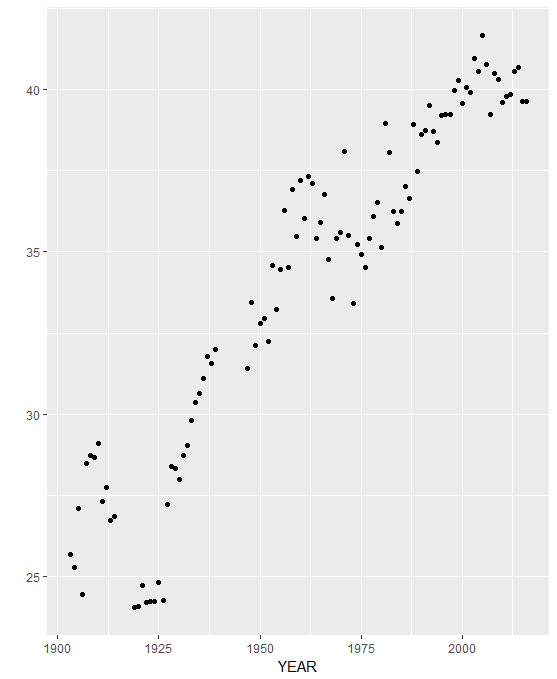}
        \caption{Scatter plot of winning Speed VS Year for the {\sf{Tour de France}} dataset. The cluster of points centered around Speed = 23 and Year = 1924 are potential outliers}
        \label{TourdeFrancedata}
\end{figure}

\subsubsection{Analysis of a nested collection of model lists of {\sf{Tour de France }}}
\label{AnalTour5}

We identify a nested model list using $Y$, $D$, $Y^2$, $D^2$ and the interaction between Year and distance denoted $Y:D$ as covariates. Because the size of the dataset is small, we can only use a small model list. We order the variables using the SCAD shrinkage method because it perturbs parameter estimates the least and satisfies an oracle property.

Under SCAD, the order of inclusion of variables is $Y$, $D$, $D^2$, $Y^2$, and $Y:D$. We therefore fit five different models. We use the six model selection techniques from Sec. \ref{Simulation} and include $\hat{d}_{VC}$ the original estimator in \cite{Vapnik:etal:1994} for the sake of comparison.
\begin{table}[ht]
  \centering
        \begin{tabular}{cccccccc}
  %\hline
  % after \\: \hline or \cline{col1-col2} \cline{col3-col4} ...
   Model & $\hat{d}_{V}$ & $\hat{d}_{VC}$ & $\widehat{ERM}_{1}$  & $\widehat{ERM}_{2}$ & AIC & BIC & CV\\
  %\hline
  \midrule
  $Y$ & 20 & 4 & 16.42 & 44.95 & 84 & 79.67 & 0.1294 \\
  %\hline
    $Y,~D$ & 20  & 4& 15.10 & 42.83 &  77 & 71.66 & 0.1209 \\
    %\hline
    $Y,~D,~D^2$ & 20 & 4 & 11.21  & 36.37 & 24 & $\bf 26.21$  & 0.0727 \\
    %\hline
    $Y, D,~D^2,~Y^2$ & 20  & 4 & 11.09 & 36.16 & \bf{17} & 28.77 & \bf{0.0681}\\
    %\hline
    $Y, D,~D^2,~Y^2,~Y:D$ & 20 & 4 & $\bf 11.06$  &  $\bf 36.11$ & 19 & 32.96 & 0.0691 \\
    %\hline
\end{tabular}
  \caption{Model selection for the {\sf{Tour de France}} data using seven methods. The design points for $\hat{d}_{V}$ and $\hat{h}_{VC}$ are 20, 30, 40, 50, 60, 70, 80, 90, and 100 and $W =50$. So $\hat{d}_{V}$ equals the smallest design point in all cases. This problem frequently occurs for $\hat{d}_V$; it can also occur 
for $\hat{d}_{VC}$ but is rarely a severe problem.}\label{NestTheo5Direct}
\end{table}
 It is seen that \cite{Vapnik:etal:1994}'s original method is helpful only if it is reasonable to surmise that there are $15$ missing variables whereas our method uniquely identifies one of the models on the list. Even though there is likely no model for {\sf{Tour de France}} dataset that is accurate to infinite precision, our method is giving a useful result. Indeed, our method is choosing the fourth model list the same as indicated by AIC and CV. The BIC drops the $Y^2$ term which is not unreasonable because, as seen in Fig. \ref{TourdeFrancedata}, the curvilinearity is less than quadratic. $\widehat{ERM}_{1}$ and $\widehat{ERM}_{2}$ include the interaction term (which can be seen to be zero by a simple $t$-test). It may also be the case that the derivation of the BIC rests heavily on using independent data 
which is not the case here.

There is nothing a priori wrong with $\widehat{ERM}_{1}$ and $\widehat{ERM}_{2}$, but a smaller model (of size 4 using $\hat{d}_{VC}$) is preferred when justifiable. Alternatively, we may regard the difference among the $3^{th}$, $4^{th}$ and $5^{th}$ model lists as trivial for $\widehat{ERM}_{1}$, $\widehat{ERM}_{2}$, so they effectively lead to the model with $Y$, $D$ and $D^2$ as terms, since $\widehat{ERM}_{1}$ and $\widehat{ERM}_{2}$ both have a large decrease from the $2$ term to the 3 term model. That is, they give the same result as BIC which we think is inferior to the model chosen by $\hat{d}_{VC}$ which tries to capture the curvilinearity in Speed as a function of Year.

\subsubsection{ Analysis of The {\sf{Tour de France}} dataset with outliers removed}
\label{AnalTourOutRemoved}

The observations just after World War I may be outliers, because so many French men were 
killed during the war. So, we consider the data set formed by deleting the points from 1919 to 1926. 
Let us see how the six model selection techniques now behave.

The process of analyzing this reduced dataset is the same: We identify the nested model lists by SCAD and then find the models corresponding to $\hat{d}_{VC}$, AIC, BIC, CV, $\widehat{ERM}_{1}$, and $\widehat{ERM}_{2}$.  The results are given in Table \ref{Set2YDNest}.

\begin{table}[ht]
  \centering
  \begin{tabular}{ccccccc}
  %\hline
  % after \\: \hline or \cline{col1-col2} \cline{col3-col4} ...
  Model Size &$\widehat{d}_{VC}$ & $\widehat{ERM}_{1} $ & $\widehat{ERM}_{2}$ & AIC & BIC & CV\\
  %\hline
  \midrule
    $Y$ & 4 & 12.87 & 40.72 & 67 & 75 & 0.1181\\
    %\hline
    $Y$, $D^2$ & 4 & 12.01 & 39.26 & 69 & 79 & 0.1336\\
    %\hline
    $Y$, $D^2$, $D$ & 4 & 11.66 & 38.36 & 46 & 59 & 0.0919 \\
    %\hline
    $Y$, $D^2$, $D$, $Y^2$ & \bf{4} & 11.48 & 38.34 & \bf{28} & \bf{43} & 0.0742\\
    $Y$, $D^2$, $D$, $Y^2$, $Y:D$ & 4 & \bf{11.35} &\bf{ 38.13} & 29 & 46 & \bf{0.0735} \\
  %\hline
\end{tabular}
\caption{Model selection for the {\sf{Tour de France}} dataset with outliers removed.}
  \label{Set2YDNest}
\end{table}

Under SCAD, the order of inclusion of our covariates is: $Y$, $D^2$, $D$, $Y^2$ and $Y:D$. This 
order is different from when we used all data points and shows that the outliers suggested $Y:D$ was
more important than it probably is.  Note also that when we used all the data points, $D$ was included before $D^2$ and $D^2$ was included after $Y^2$. With this new ordering we fit 5 different models.

%%%%%%%%%%%%%%%%%%%%%%%%%%%%%%%%%%%%%%%%%%%%%%%%%%%%%%%%%%%
From Table \ref{Set2YDNest}, if we choose a model using $\hat{d}_{VC}$, we get the same answer as in Sec. \ref{AnalTour5}, the model with four variables: $Y$, $D^2$, $D$, $Y^2$. AIC and BIC choose the same model 
probably because $(Y:D)$ has low correlation with Speed (-0.08). $\widehat{ERM}_{1}$, $\widehat{ERM}_{2}$ 
and CV choose the model of size 5, which we discount as before because $Y:D$ is only slightly correlated with 
Speed. That is, the reasoning in Subsec. \ref{AnalTour5} for why we think that the 
model chosen by $\hat{d}_{VC}$ is best continues to hold.

\subsection{Analysis of the {\sf{Abalone} Dataset}}
\label{Abalone}
The {\sf{Abalone}} dataset was first presented in \cite{Nash:etal:1994} and can be freely downloaded from
\url{http://archive.ics.uci.edu/ml/datasets/abalone}. {{\sf Abalone}} has been widely used in statistics and in machine learning as a benchmark dataset. It is known to be very difficult to analyze as either a classification or as a regression problem. Our goal in this section is to see how our method will perform and compare the result to other model selection techniques. Here, in addition to AIC, BIC, CV, $\widehat{ERM}_1$ and $\widehat{ERM}_2$, we add SCAD and ALASSO.

\subsubsection{Descriptive Analysis of {\sf{Abalone}} dataset}
\label{AbaDesc}
The $\sf{ Abalone}$ dataset  has 4177 observations with 8 covariates:  Length (mm) is the longest shell measurement, Diameter (mm), Height (mm) is the height measured with the meat, Whole weight (grams) is the whole weight of the abalone, Shucked weight (grams) is the weight of the meat, Viscera weight (grams) is the gut weight after bleeding, Shell weight is the shell weight after being dried. The response variable is Rings; the number of Rings is roughly the age of an abalone \footnote{There actually is an 8th covariate. Sex is a nominal variable with 3 categories: 
Male, Female and Infant.  We did not include it in our analysis because we wanted to limit attention to
continuous variables in this example.}.

\begin{figure}
    \centering
    %\begin{subfigure}[b]{0.4\textwidth}
    %    \centering
    %    \includegraphics[width = \textwidth]{AbaDiameter}
    %    \caption{Scatter plot of $Rings$ vs $Diameter$}
    %    \label{ScatterDiameter}
    %\end{subfigure}
    %\hfill
    \begin{subfigure}[b]{0.4\textwidth}
        \centering
        \includegraphics[width = \textwidth]{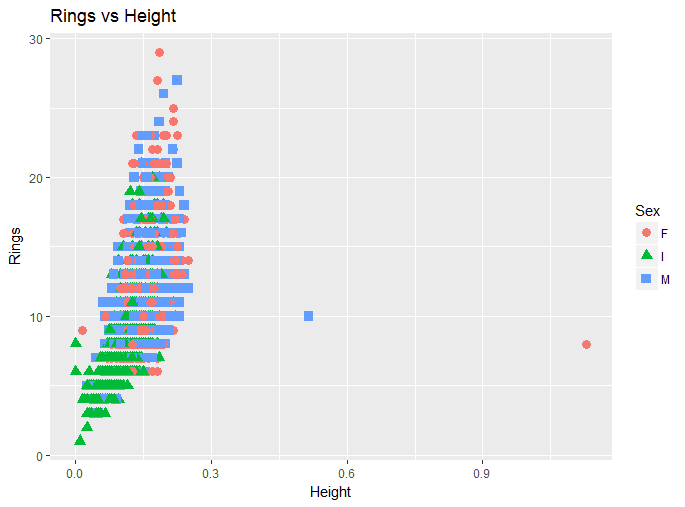}
        \caption{Scatter plot of Rings vs Height}
        \label{ScatterHeight}
    \end{subfigure}
    \begin{subfigure}[b]{0.4\textwidth}
        \centering
        \includegraphics[width = \textwidth]{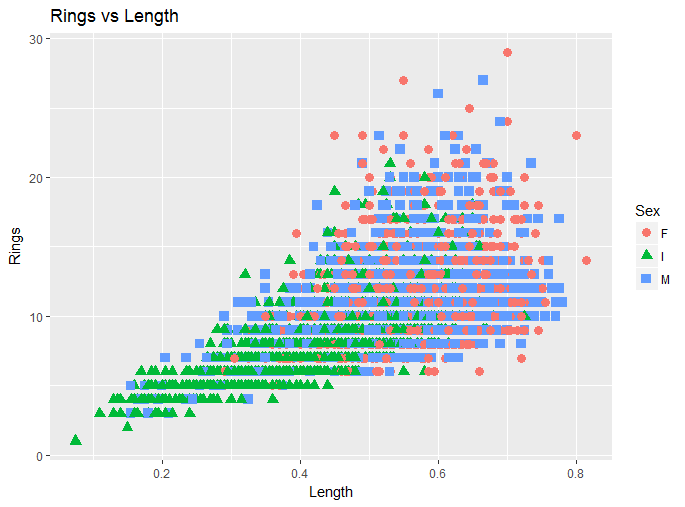}
        \caption{Scatter plot of Rings vs Length}
        \label{ScatterLength}
    \end{subfigure}
    \hfill
    \begin{subfigure}[b]{0.4\textwidth}
        \centering
        \includegraphics[width = \textwidth]{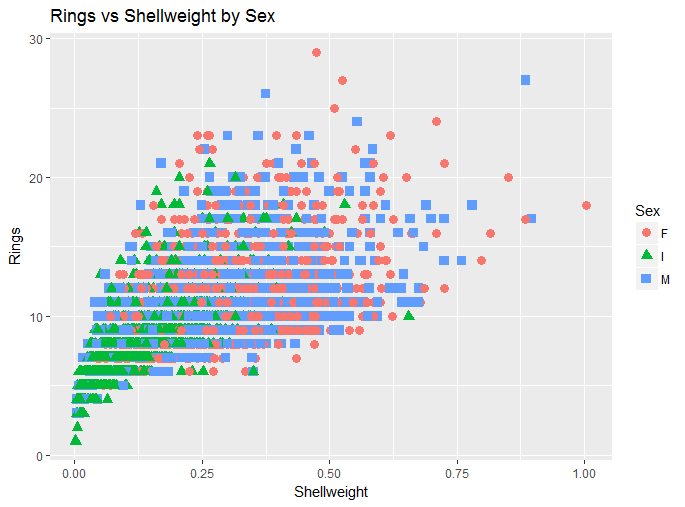}
        \caption{Scatter plot of Rings vs Shell weight}
        \label{ScatterShellweight}
    \end{subfigure}
    \hfill
    \begin{subfigure}[b]{0.4\textwidth}
        \centering
        \includegraphics[width = \textwidth]{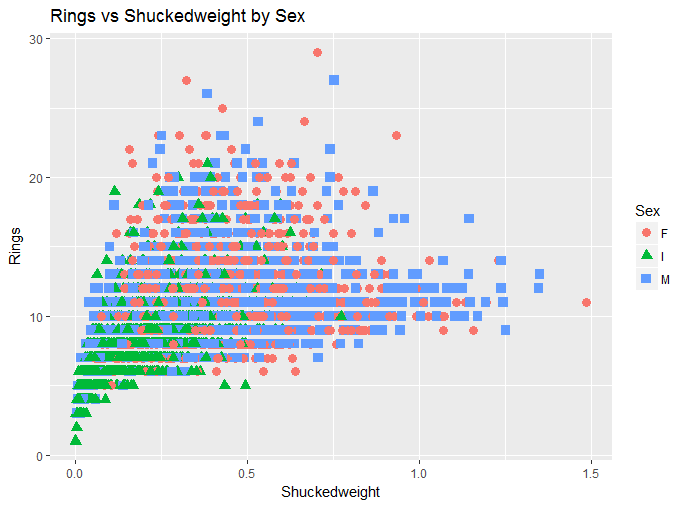}
        \caption{Scatter plot of Rings VS Shucked weight}
        \label{ScatterShuked}
    \end{subfigure}
   % \hfill
   % \begin{subfigure}[b]{0.4\textwidth}
   %     \centering
   %     \includegraphics[width = \textwidth]{AbaVisceraweight}
   %     \caption{Scatter plot of Rings VS Viscera weight}
   %     \label{ScatterViscera}
   % \end{subfigure}
    %\hfill
    %\begin{subfigure}[b]{0.4\textwidth}
    %    \centering
    %    \includegraphics[width = \textwidth]{AbaWholeweight}
    %    \caption{Scatter plot of Rings VS Whole weight}
    %    \label{ScatterWholeweight}
    %\end{subfigure}
    \caption{Scatter plot of Rings vs  Height,
    Length, Shell weight, Shucked weight by Sex}
    \label{ScatterPlot}
\end{figure}
Fig. \ref{ScatterPlot} shows pairwise scatterplots of Rings versus covariates. We see that no matter which 
covariates are chosen, the variability in the Rings increases as the covariates increase. We also observe that 
there is likely to be a curvilinear relationship between Rings and each of these covariates. However, the 
curvilinearity is not strong because of the variability. Indeed for Rings vs Length it is almost nonexistent. 
The three scatterplots we left out give near duplicate to the panels in Fig. \ref{ScatterPlot} due to colinearity. 
(Rings vs Diameter is nearly the same as Rings vs Length; and, Rings vs Viscera weight and Rings vs Whole 
weight is nearly the same as Rings vs Schucked weight.) Thus, as a simple analysis, we choose 7 linear 
terms in the 7 explanatory variables. 

\subsubsection{Statistical Analysis of the {\sf{Abalone}} data}

The model that we use to estimate the complexity of the response `Rings' is a linear combination of all the variables. To accomplish this, we first order the inclusion of variables in the model using correlation; see \cite{fan2008sure}. Under correlation between Rings and each of the explanatory variables, the order of inclusion of variables is as follows: Shell weight, Diameter, Height, Length, Whole weight, Viscera weight and Shucked weight. Using this ordering, we form a nested sequence of models and found $\hat{d}_{VC}$, AIC, BIC, CV, $\widehat{ERM}_1$ and $\widehat{ERM}_2$ for each model. The results are in Table \ref{AbaNested}.Below, we also compare our method to two sparsity methods namely SCAD and ALASSO.

From Table \ref{AbaNested}, we observe that $\hat{d}_{VC} = 8$ except for model of size 5. We regard $\hat{d}_{VC} = 9$ for a model of size 5 as a random fluctuation since it is close to 8 and our method, while stable, is not perfectly so. The full model is chosen using $\hat{d}_{VC}$ because it has the smallest distance between its size and $\hat{d}_{VC}$. However, the fact that model is first order of size 7 and $\hat{d}_{VC}=8$ suggests there may be a missing variable in the dataset, i.e., unavoidable bias. It is unclear without much further work whether including Sex as a dummy variable or including higher terms in the covariates will remedy this. We see some variability in the estimates of $\widehat{ERM}_{1}$ and $\widehat{ERM}_{2}$ as we include variables in the model. However, the key point is that there is a relatively big decrease in passing from model 6 to model 7. This observation is the same for AIC, BIC and CV. So, all methods effectively pick the model of size 7. The benefit of our model is that it provides evidence that a larger model is necessary.
\begin{table}[h]
  \centering
\begin{tabular}{ccccccc}
  %\hline
  % after \\: \hline or \cline{col1-col2} \cline{col3-col4} ...
  Model Size & $\widehat{d}_{VC}$ & $\widehat{ERM}_{1} $ & $\widehat{ERM}_{2}$ & AIC & BIC & CV \\
  %\hline\hline
  \midrule
  Shellweight & 8 & 2533 & 2597 & 9768 & 9787 & 0.6067\\
  Shellweight, Diameter & 8 & 2532 & 2596 & 9768 & 9793 & 0.6066\\
  Shellweight, Diameter, Height & 8 & 2507 & 2571 & 9729 & 9760 & 0.6111 \\
  Shellweight, Diameter, Height, Length & 8 & 2478 & 2542 & 9682 & 9720 & 0.6059\\
  5 & 9 & 2260 & 2324 & 9299 & 9343 & 0.5555\\
  6 & 8 & 2259 & 2320 & 9300 & 9351 & 0.5558\\
  7 & 8 & \bf{1974} & \bf{2031} & \bf{8738} & \bf{8795} & \bf{0.4849}  \\
  %\hline
\end{tabular}
\caption{Seven nested models under correlation with Rings for the {\sf{Abalone}} dataset and the results from six model selection techniques. For $\hat{d}_{VC}$ we set $L = 8$ and took the $n_l$'s to range from 1000 to 5000 in steps of 500.}
\label{AbaNested}
\end{table}

Next we turn to the results of a sparsity driven analysis. Since we are using seven explanatory variables and $n = 4177$, sparsity is not an important property for a model. Hence, we present these results for comparative purposes only.

First suppose SCAD is used as a model selection technique. The optimal value of $\lambda$ is found to be $\hat{\lambda} = 0.0027$. With this value of $\hat{\lambda}$, the best model must have 6 variables.
The variables that get into the model are, in order, Shell weight, Shucked weight, Height, Diameter, Viscera weight, and Whole weight. Thus, under SCAD, we are led to the model
    \begin{equation}
    \label{AbaEqua}
    \begin{split}
    \widehat{Rings} & = 0.36\cdot Diameter + 0.15\cdot Height + 1.40\cdot Wholeweight \\
     & - 1.39S\cdot Shuckedweight -0.34\cdot Visceraweight  \\
     & + 0.37\cdot Shellweight.
    \end{split}
    \end{equation}

    Analogous analysis under ALASSO leads us to the same six terms and the model is:

    \begin{equation} \label{AbaALASSO}
    \begin{split}
    \widehat{Rings} & = 3 + 11.62\cdot Diameter + 11.69\cdot Height + 9.21\cdot Wholeweight \\
     & - 20.24\cdot Shuckedweight -9.79\cdot Visceraweight  \\
     & + 8.63\cdot Shellweight.
    \end{split}
    \end{equation}
Both \eqref{AbaEqua} and \eqref{AbaALASSO} omit Length, and have the same terms, albeit with different coefficients. This may occur because there is a high correlation between covariates. SCAD and ALASSO, being sparsity methods, give one fewer term than the models in Table \ref{AbaNested}, although in Table \ref{AbaNested} Length is the fourth most important variable. This difference, too,  may reflect collinearity. 

Overall, it is seen that the largest model is the most reasonable and is identified by $\hat{d}_{VC}$ whose 
values also suggest without further analysis that even the full model is not large enough. 
This statement is also consistent with other analyses of this data.

\section{Application to a Typical Multi-type Agronomic Data Set }
\label{Application}

To demonstrate the use of our technique, we re-analyze the {\sf{Wheat}} data set presented and studied in
 \cite{campbell2003identification}, \cite{DILBIRLIGI200674}, and \cite{Dhungana:et:al:2007} from a 
non-complexity based standpoint. The $\sf{Wheat}$ data set has 2912 observation and 104 varieties. 
The experimental study was conducted in seven locations; Lincoln, NE,
in 1999 to 2001, and Mead and Sidney, NE, in 2000 and 2001. The design used in Lincoln, NE, in 
1999 was a Randomized Complete Block Design (RCBD) with four replicates. In other years, the 
design was an incomplete block design with four replicates where each replication consisted of 
eight incomplete blocks of thirteen entries.  The environments are diverse and representative of wheat 
producing areas of Nebraska. More information concerning the data set and the design structure 
can be found in \cite{campbell2003identification}.
The response variable is YIELD (MG/ha), the covariates that we used are 1000 kernel weight (TKWT), 
kernels per spike (KPS), Spikes per square meter (SPSM), height of the plant (HT), test weight 
(TSTWT(KG/hl)), and kernels per square meter (KPSM).
Often, in agronomic data sets, there are several classes of explanatory variables, here we have 
phenotypic, single
nucleotide polymorphisms (SNP's) and the variables defining the design. Through a series of 
examples, we show how to use our VC dimension based model selection procedure and verify 
that it gives good results compared with seven other methods.

Our collection of explanatory variables can be grouped into three categories: phenotype, SNP, and design variables. Treating these as groups of variables, we can, in principle fit $2^3 - 1 = 7$ different model classes, but not all these will make sense.  Specifically, we rule out models containing only SNP variables,  only design variables, and 
those containing only SNP and design variables. We rule these out because the design variables 
are purely our choice and individual SNP variables rarely account for more than a small fraction of the 
YIELD.  Therefore, we fit the remaining four different classes of models.

The difference between the analyses of Subsecs. \ref{Phenotype} and \ref{Design} is that the latter includes design variables. Likewise, the difference between the analyses in the Subsecs \ref{AnalSNP} and \ref{AnalPhenoSNPdsign} is that the latter includes design variables. So it is natural to 
examine the similarities and differences between the two sets of comparisons i.e., to compare the difference between Subsecs. \ref{Phenotype} and \ref{Design} to the difference between Subsecs \ref{AnalSNP} and \ref{AnalPhenoSNPdsign}. This may seem repetitive, but it is the evidence we need for the discussion in Sec. \ref{GenreConcl}.

The remainder of this section presents our new analyses of the {\sf{Wheat}} data set. We begin in Subsec. \ref{AnalWheat} with an initial descriptive data analysis. In Subsec. \ref{Phenotype} we look at two elementary analyses. First, we analyze only the  phenotypic data using our VC dimension method. We do this in two ways; for each location-year combination (i.e., environments) separately and then for the data pooled over all environments. Subsec. \ref{Design} briefly discuses the results of a re-analysis of the phenotypic data including the design variables, but the computational output is deferred to the Supplementals. In Subsec. \ref{AnalSNP}, we perform the same two analyses but combine phenotypic and SNP variables. Subsec. \ref{AnalPhenoSNPdsign} briefly discusses the results of a re-analysis using all three data types, but the computational output is deferred to the supplementals. Taken together, these are the four out of seven analyses that make sense to perform.

\subsection{Initial Data Analysis of {\sf{Wheat}} }
\label{AnalWheat}

This simple analysis of {\sf{Wheat}} will be done using only the phenotypic data, pooling over all location year combinations.
From Fig. 1 in \cite{Supplementals:2017}, we see that lines connecting locations by varieties are crossing; this indicates interactions between varieties and locations. We also see that there is a lot of variability among the varieties in Lincoln 1999 and that the YIELD varies from one location to another. The smallest value of the YIELD occurs in Lincoln in 1999 and the highest YIELD occurs at Lincoln in 2001. The most variable location is Sidney in 2000. All data points outside of the whisker plot can be considered as potential outliers but they do not appear to be excessive given the sample size. In the sequel, we focus on the Lincoln 1999 and Lincoln 2001 data because they are the most different. An alternative we did not pursue would be using the Sidney 2000 data because they are the most variable. We did this because our focus is on model selection not on data variability (although the two are related).

From Fig. 2 in \cite{Supplementals:2017}, we see that there is a reasonably linear relationship between YIELD and KPSM -- although the variance appears to increase slowly with KPSM. %Fig. \ref{spsm} shows 
There is also a fairly good linear relationship between YIELD and SPSM although the variance increases with SPSM. However, %Fig. \ref{tstwt} shows there is a 
we see a curvilinear relationship between YIELD and TSTWT. In addition, the variance starts small (as a function of TSTWT), increases rapidly and then appears to decrease. %Figs. \ref{ht}, \ref{kps}, and \ref{tkwt}
The graphs also suggest that the data do not reflect a strong relationship between 
YIELD and any of KPS, HT, and TKWT.

From Table 1 in  \cite{Supplementals:2017}, we see a strong linear relationship between YIELD and KPSM (0.93), TSTWT (0.80), and SPSM (0.74). We also see a weak linear relationship between YIELD and HT (0.04), TKWT (0.27), and KPS (0.022). %Taken together, these observations confirm our informal inferences from Fig. \ref{scatterwheat}. 

There are some strong correlations among the phenotypic variables: These suggest collinearity. For instance, the correlation between KPSM and SPSM is 0.83, the correlation between between KPSM and TSTWT is 0.64 and the correlation between TSTWT and TKWT is 0.53. There are also some weak correlations between covariates; the correlations between TSTWT and HT, TSTWT and KPS, and HT and KPS are respectively 0.09, -0.06, and -0.22. As with the {\sf{Tour de France}} and the {\sf{Abalone}} data, whether the explanatory variables are weakly or strongly collinear does not seem to affect our methodology very much, if at all. We think this is so because the LHS of \eqref{objfn} is a {\it{difference}} of predictions. Thus even if the explanatory variables were correlated and the variance inflation were nontrivial, the expected difference only reflects location, and hence $\hat{d}_{VC}$, will be insensitive to the
elevated variability.

Intuition suggests
\begin{equation}
\label{eq5}
    YIELD = \beta_{0} + \beta_{1}\cdot TKWT\cdot KPSM + \epsilon
\end{equation}
will be a good model because YIELD is essentially the product of the number of kernels and their average weight. Likewise,
\begin{equation}
\label{eq6}
    YIELD = \beta_{0} + \beta_{1}\cdot TKWT\cdot KPS\cdot SPSM + \epsilon
\end{equation}
should also be a good model. So, to first approximation using only phenotpic variables does not lead to a {\it{unique}} good model. Indeed, these two models probably only capture the major effects of the explanatory variables. 
Both are over simplifications and we can be confident that other influences on YIELD must be considered. 
Indeed, a 3-dimensional plot of the vectors (YIELD, TKWT, KPSM) looks like a triangle that is bowed out to one side.
The bowing means that \eqref{eq5} is only an approximation; other terms are required to explain YIELD.  
Henceforth, we focus on \eqref{eq5} rather than 
\eqref{eq6} because we have limited ourselves to second order models. 

The question of which models should be on the model list has been discussed by numerous authors. \cite{Burnham:2003} provide some general guidelines assuming substantial
familiarity with the science behind a given application; this seems to be the prevailing frequentist view.
It also remains unclear how much the intuition of the key result of \cite{berk1966} 
carries over to the model selection context. 
From the Bayes perspective,
\cite{george:et:al} discusses prior selection as a way to compensate for collinearity
in explanatory variables.  \cite{Fokoue2011} find  variance-bias tradeoff on the level of mode lists for Bayes model averaging (that probably carries over to other model
averaging and selection techniques). More recently, \cite{grunwald2017} shows that in the presence of many forms of unavoidable bias, Bayes methods need to be reformulated and offers one way to do so.
Thus, although there has been ample discussion
of model list selection, there are few guidelines more specific than the injunction
to do your modeling well.  So, here we use a model list that we think will be large enough to be adequate and yet small enough that a unique best model can be found.

\subsection{Estimation of VC Dimension Using Phenotypic Covariates Only}
\label{Phenotype}

First, we analyze the data for each environment separately to show how our new 
VC dimension methodology performs in contrast to the AIC, BIC, CV $\widehat{ERM}_1$, 
$\widehat{ERM}_2$, SCAD and ALASSO. 

As will be seen, the analyses are done for two settings, location-year specific and multi-location. To implement these, we first find the order of inclusion of the phenotypic variables in the model, using correlation with YIELD. Then, we find values for $\hat{d}_{VC}$, AIC, BIC, CV,  $\widehat{ERM}_{1}$, $\widehat{ERM}_{2}$, and the models given by SCAD and
ALASSO. We present the analyses of Lincoln 1999 and Lincoln 2001 as two of the seven location-year combinations (environments), referring to \cite{Merlin:etal:2017} for the details on the other five environments. Within each environment, we pool over the design structure and the varieties. Then, for comparison purposes, we redo this form of analysis pooling over all data as was done in Subsec.\ref{AnalWheat}. We call this a multi-location analysis.

\subsubsection{Location-year analyses}
\label{locyearanal}
We begin with two examples of the use of VC dimension for fix location-year data, for Lincoln 1999 and Lincoln 2001.

\subsubsection*{Analysis of {\sf{Wheat}} data in Lincoln 1999}
\label{Lin1999}

We begin with a graphical analyses of the Lincoln 1999 data set. From Fig. 3 in \cite{Supplementals:2017}, we see that there is a strong linear relationship between YIELD and KPSM. The variance is increasing very slowly with KPSM so for practical purposes we regard it as constant. There are also some data points not close to the majority of data but the overall trend is linear. %From Fig. \ref{spsm3}, 
We see a weaker linear relationship between YIELD and SPSM and the variance appears to increase as a function of SPSM. %From Figs. \ref{tstwt3} to \ref{tkwt3}, 
We also observe that there does not appear to be any non-trivial relationship between between YIELD and any of TSTWT, KPS, HT, and TKWT.

We compare our method to the seven other methods using second order linear models in the phenotypic covariates. Thus, we use the absolute value of the correlation  between YIELD and the phenotypic covariates to order the inclusion of phenotypic variables and their products in the model. There are six linear terms, six squared terms, and fifteen cross products, this leads to a total of 27 variables and therefore 27 nested models. The order of inclusion of terms is:
TKWT $\cdot$ KPSM,  KPSM $\cdot$ HT, TSTWT $\cdot$ KPSM, SPSM $\cdot$ KPS, KPSM, KPSM$^2$, SPSM $\cdot$ KPSM, TKWT $\cdot$ SPSM, KPS $\cdot$ KPSM, SPSM, TSTWT $\cdot$ SPSM, SPSM $\cdot$ HT, SPSM$^2$, KPS $\cdot$ HT, TSTWT $\cdot$ KPS, KPS$^2$, $KPS$, TKWT $\cdot$ KPS, HT$^2$, HT, TSTWT $\cdot$ HT, TSTWT, TSTWT$^2$, TKWT$ \cdot$ HT, TKWT $\cdot$ TSTWT, TKWT$^2$, and TKWT.

\begin{table}[h]
    \caption{The column labeled size gives the number of coefficients for each of the linear models. 
    The $2^{nd}$ through $7^{th}$ columns give the corresponding estimates for $\hat{d}_{VC}$, $\widehat{ERM}_{1}$, $\widehat{ERM}_{2}$, AIC, BIC and CV for the Lincoln 1999 data. Here and elsewhere, the numbers in bold indicate the optimum for each column.}
    \label{Loc3}
    \begin{subtable}[h]{0.45\textwidth}
        \centering
        \begin{tabular}{ccccccc}
          Size& $\hat{d}_{VC}$ & $\widehat{ERM}_{1}$ & $\widehat{ERM}_{2}$ & AIC & BIC & CV\\
        %\hline \hline
        \midrule
        1 & 7&    {\bf 8}&   {\bf 19}& \bf{-691}& \bf{-679}& 0.01097 \\
        2 & 7&    8&   19& -689& -673& 0.01097 \\
        3 & 7&    8&   19& -688& -668& \bf{0.01096} \\
        4 & 7&    8&   19& -687& -663& 0.01100 \\
        5 & 7&    8&   19& -693& -664& 0.01146 \\
        6 & 7&    8&   19& -691& -659& 0.01150 \\
        7 & $\bf{7}$&    8&   19& -689& -653& 0.01154 \\
        8 & 7&    8&   19& -687& -647& 0.01156  \\
        9 & 7&    8&   19& -686& -642& 0.01156 \\
        10 & 7&    8&   19& -684& -636& 0.01157 \\
        11 & 7&    8&   19& -682& -630& 0.01158 \\
        12 & 7&    8&   19& -681& -625& 0.01159 \\
        13 & 7&    8&   19& -679& -619& 0.01160 \\
        14 & 7&    8&   19& -682& -617& 0.01160 \\
        15 & 7&    8&   19& -680& -612& 0.01162 \\
        16 & 7&    8&   19& -678& -606& 0.01162  \\
        17 & 7&    8&   19& -677& -600& 0.01162\\
        18 & 7&    8&   19& -675& -594& 0.01163 \\
        \end{tabular}
        \label{Loc3set1}
    \end{subtable}
    \iffalse
    \hfill
    \begin{subtable}[h]{0.45\textwidth}
        \centering
        \begin{tabular}{ccccccc}
          Size& $\hat{d}_{VC}$ & $\widehat{ERM}_{1}$ & $\widehat{ERM}_{2}$ & AIC & BIC & CV \\
        %\hline \hline
        \midrule
        10 & 7&    8&   19& -684& -636& 0.01157 \\
        11 & 7&    8&   19& -682& -630& 0.01158 \\
        12 & 7&    8&   19& -681& -625& 0.01159 \\
        13 & 7&    8&   19& -679& -619& 0.01160 \\
        14 & 7&    8&   19& -682& -617& 0.01160 \\
        15 & 7&    8&   19& -680& -612& 0.01162 \\
        16 & 7&    8&   19& -678& -606& 0.01162  \\
        17 & 7&    8&   19& -677& -600& 0.01162\\
        18 & 7&    8&   19& -675& -594& 0.01163 \\
        \end{tabular}
        \caption{Model sizes 10 to 18}
        \label{Loc3set2}
    \end{subtable}
    
    \begin{subtable}[h]{0.45\textwidth}
        \centering
        \begin{tabular}{cccccc}
          Size& $\hat{d}_{VC}$ & $\widehat{ERM}_{1}$ & $\widehat{ERM}_{2}$ & AIC & BIC \\
        %\hline \hline
        \midrule
        19& 7&    8&   19& -673& -588& 0.01164899 \\
		20& 7&    8&   19& -672& -583& 0.01165369 \\
		21& 7&    8&   19& -670& -577& 0.01168737 \\
		22& 7&    8&   19& -668& -571& 0.01171772 \\
		23& 7&    8&   19& -667& -566& 0.01172016 \\
		24& 7&    8&   19& -665& -560& 0.01174286 \\
		25& 7&    8&   19& -664& -555& 0.01174651 \\
		26& 7&    8&   19& -662& -549& 0.01178405 \\
		27& 7&    8&   19& -662& -545& 0.01177444 
        \end{tabular}
        \label{Loc3set3}
    \end{subtable}
    \fi
\end{table}
Our first results are given in Table \ref{Loc3}. Specifically, for Lincoln 1999, and for each of the models, we have the corresponding values of $\hat{d}_{VC}$, AIC, BIC, CV, $\widehat{ERM}_{1}$, and $\widehat{ERM}_{2}$.
It is seen that there is no variability in the estimates of $\hat{d}_{VC}$ and $\widehat{ERM}_{1}$, and $\widehat{ERM}_{2}$. The smallest difference between the size of the conjectured model and $\hat{d}_{VC}$ occurs for model 7. It is also seen that $\widehat{ERM}_{1}$, $\widehat{ERM}_2$, AIC and BIC pick the smallest model i.e., the one with $TKWT\cdot KPSM$ as the only variable. (When using $\widehat{ERM}_{2}$ or $\widehat{ERM}_{2}$, the convention is to choose the models giving their smallest values.) Although, the CV values vary only slightly, they
exhibit a strong pattern hinting that the model 
consisting of the first three terms should be chosen.

Now, we turn to shrinkage methods. The optimal models chosen by SCAD and ALASSO are, respectively,

\begin{equation}
\label{Loc3SCAD}
\widehat{YIELD} =  1.93 + 0.52\cdot KPSM\cdot TKWT,
\end{equation}
and 
\begin{equation}
\label{Loc3ALASSO}
\widehat{YIELD} =  1.93 + 0.48\cdot KPSM\cdot TKWT + 0.038\cdot SPSM\cdot KPS + 0.0043\cdot TKWT.
\end{equation}
Thus, the model chosen by ALASSO uses the first, fourth and $27^{th}$ explanatory variables. However, the extra terms beyond TKWT$\cdot$KPSM have very small coefficients. (All variables have been studentized so they are on the same scale.)
So, we regard \eqref{Loc3ALASSO} as effectively the same as \eqref{Loc3SCAD}.
So, we are left with the one term model (more or less discounted in Subsec. \ref{AnalWheat}), the three term CV model, and the seven term $\hat{d}_{VC}$ model. It is hard to decide between the three and seven term model because both are a priori plausible. However, note that CV is a predictive criterion while VC is a complexity criterion.  Thus if the goal is merely predictive, the CV's model is preferred and if the goal is model identification, the $\hat{d}_{VC}$ model is preferred. This follows because the best finite sample predictor may be much simpler than the predictor from the true model when it is complex.

\subsubsection*{Analysis of {\sf{Wheat}} Data in Lincoln 2001}
\label{AnalLin2001}

Starting with a graphical analysis of the Lincoln 2001 data, we see from Fig. 4 in \cite{Supplementals:2017} 
%\ref{kpsm2} 
that there is a reasonably linear relationship between YIELD and KPSM. The variability does not change much as KPSM increases. There are some data points that are not close to  the majority of the data, but the overall trend is linear. %In Fig. \ref{spsm2} 
We also see a weak linear relationship between YIELD and SPSM and the variance appears to increase as a function of SPSM.% From Fig. \ref{tstwt2} to \ref{tkwt2} 
There does not appear to be any non-trivial relationship between YIELD and the other phenotypic covariates.

For the Lincoln 2001 data set, we use the same model list i.e., all terms of order $\leq 2$ in the phenotypic variables. Under the absolute value of the correlation with $YIELD$, the order of inclusion is:
 TKWT $\cdot$ KPSM, TSTWT $\cdot$ KPSM, KPSM, SPSM $\cdot$ KPS, KPSM$^2 $,
 SPSM $\cdot$ KPSM, KPSM $\cdot$ HT, TKWT $\cdot$ SPSM, KPS $\cdot$ KPSM, TSTWT $\cdot$ SPSM,
 SPSM, SPSM$^2$, SPSM $\cdot$ HT, TSTWT, TSTWT$^2$, TKWT $\cdot$ KPS,
 TSTWT $\cdot$ KPS, KPS$^2$, TKWT $\cdot$ TSTWT, KPS, TKWT, TKWT$^2 $, KPS $\cdot$ HT , 
 TKWT $\cdot$ HT, TSTWT $\cdot$ HT,  HT$^2$,  HT. This ordering on the phenotypic variables is different from that of Lincoln 1999.

\begin{table}[ht]
    \caption{The column labeled size gives the number of coefficients for each of the linear model. 
    The $2^{nd}$ through $7^{th}$ columns give the corresponding estimates for $\hat{d}_{VC}$, $\widehat{ERM}_{1}$, $\widehat{ERM}_{2}$, AIC, BIC and VC for the Lincoln 2001.}
    \begin{subtable}[h]{0.45\textwidth}
        \centering
        \begin{tabular}{ccccccc}
          Size& $\hat{d}_{VC}$ & $\widehat{ERM}_{1}$ & $\widehat{ERM}_{2}$ & AIC & BIC & CV \\
        \hline %\hline
        1&  6&    \bf{5}&   \bf{13}& \bf{-1055}& \bf{-1043}& \bf{0.003523772} \\
		2&  6&    5&   13& -1053& -1037& 0.003523852 \\
		3&  6&    5&   13& -1051& -1032& 0.003528050 \\
		4&  6&    5&   13& -1050& -1026& 0.003531868 \\
		5&  6&    5&   13& -1048& -1020& 0.045582343 \\
		6&  \bf{6}&    5&   13& -1046& -1015& 0.042056585 \\
		7&  6&    5&   13& -1044& -1009& 0.041950879 \\
		8&  6&    5&   13& -1042& -1003& 0.042716568 \\
		9&  6&    5&   13& -1040&  -997& 0.042944324
        \end{tabular}
        %\caption{Model sizes from 1 to 9}
        \label{Loc2set1}
    \end{subtable}
    \iffalse
    \hfill
    \begin{subtable}[h]{0.45\textwidth}
        \centering
        \begin{tabular}{ccccccc}
          Size& $\hat{d}_{VC}$ & $\widehat{ERM}_{1}$ & $\widehat{ERM}_{2}$ & AIC & BIC & CV\\
        \hline \hline
        10& 6&    5&   13& -1038&  -991& 0.043503630 \\
		11& 6&    5&   13& -1043&  -992& 0.079361885 \\
		12& 6&    5&   13& -1042&  -987& 0.083019021 \\
		13& 6&    5&   13& -1040&  -981& 0.082959969 \\
		14& 6&    5&   13& -1041&  -979& 0.064653613 \\
		15& 6&    5&   13& -1040&  -973& 0.062946594 \\
		16& 6&    5&   13& -1038&  -967& 0.064996174 \\
		17& 6&    5&   13& -1036&  -961& 0.068105920 \\
		18& 6&    5&   13& -1034&  -955& 0.067402025 
        \end{tabular}
        \caption{Model sizes from 10 to 18}
        \label{Loc2set2}
    \end{subtable}
    \begin{subtable}[h]{0.45\textwidth}
        \centering
        \begin{tabular}{ccccccc}
          Size& $\hat{d}_{VC}$ & $\widehat{ERM}_{1}$ & $\widehat{ERM}_{2}$ & AIC & BIC & CV\\
        \hline \hline
        19& 6&    5&   13& -1032&  -950 0.067731907
		20& 6&    5&   13& -1031&  -944& 0.065494951 \\
		21& 6&    5&   13& -1029&  -939& 0.066406079 \\
		22& 6&    5&   13& -1029&  -935& 0.058171279 \\
		23& 6&    5&   13& -1028&  -930& 0.044830019 \\
		24& 6&    5&   13& -1026&  -924& 0.047369675 \\
		25& 6&    5&   13& -1025&  -919& 0.050556098 \\
		26& 6&    4&   13& -1047&  -937& 0.007945095 \\
		27& 6&    4&   13& -1047&  -937& 0.007945095
        \end{tabular}
        \caption{Model sizes from 19 to 27}
        \label{Loc2set3}
    \end{subtable}
    \fi
    \label{Loc2}
\end{table}

Parallel to Table \ref{Loc3}, the columns of Table \ref{Loc2} give the model sizes and the their corresponding values of $\hat{d}_{VC}$, $\widehat{ERM}_{1}$, $\widehat{ERM}_{2}$, AIC, BIC, and CV but for the Lincoln 2001 data. First, we see that $\hat{d}_{VC} = 6$ no matter the size of the conjectured model. (In fact, $\hat{d}_{VC}$ for model 26 was 8 but we regarded that as a random fluctuation.) In any case, the smallest difference between the size of the conjectured model and $\hat{d}_{VC}$ occurs for the conjectured model of size $6$.
Likewise, we also observe essentially no variability in the values of $\widehat{ERM}_{1}$ and $\widehat{ERM}_{2}$, so following convention, they indicate the one term model. 
The smallest values for  AIC, BIC, and CV also occur for the model with one explanatory variable. The main
 reason we take the CV seriously is that its values are monotonically increasing 
even though the differences are very small.

Turning to sparsity methods, the model chosen by SCAD is
\begin{equation}
\label{Loc2SCAD}
\widehat{YIELD} = 4.67 + 0.40\cdot TKWT \cdot KPSM,
\end{equation}
and the model chosen by ALASSO is 
\begin{equation}
\label{Loc2ALASSO}
\widehat{YIELD} = 4.67 + 0.40\cdot TKWT \cdot KPSM + 0.0017\cdot KPSM.
\end{equation}
It is seen that the models in \eqref{Loc2SCAD} and \eqref{Loc2ALASSO} differ only by one term with a very small coefficient and so are the same for all practical purposes.

Taken together, we see that  $\hat{d}_{VC}$ chooses the 6 term model and all the other methods choose the one term model.
As described at the end of Subsec. \ref{AnalWheat}, we find this model too much of an oversimplification to be credible. 
Thus in this case, the only reasonable answer is given by the VC dimension model.

\subsubsection{Multi-location Analysis}
\label{MultiLoc}

For this analysis, we pooled over location-year combinations, design structures and varieties. An initial analysis of the data was given in Subsec. \ref{AnalWheat}. So, here, we merely proceed with the model selection problem. 
 
 Under absolute value of correlation with YIELD, the order of inclusion of the explanatory variables is:
 TKWT $\cdot$ KPSM, TSTWT $\cdot$ KPSM, KPSM, SPSM $\cdot$ KPS, KPSM$^2$, KPSM $\cdot$ HT, 
 TKWT $\cdot$ SPSM, TSTWT$^2$, TSTWT, SPSM $\cdot$ KPSM,  KPS $\cdot$ KPSM, TSTWT $\cdot$ SPSM,
 SPSM SPSM $\cdot$ HT, SPSM$^2$, TKWT $\cdot$ TSTWT, TKWT, TSTWT $\cdot$ HT,
 TKWT$^2$ TKWT $\cdot$ KPS, TSTWT $\cdot$ KPS, TKWT $\cdot$ HT, KPS $\cdot$ HT, HT, HT$^2$ KPS, KPS$^2$.
 As in Secs. \ref{Lin1999} and \ref{AnalLin2001},  we consider 27 nested models. This leads to Table \ref{MultiLocAnal} and is parallel to 
 Tables \ref{Loc3} and \ref{Loc2}.

\begin{table}[ht]
    \caption{The column labeled size gives the number of coefficients for each of the linear model. 
    The $2^{nd}$ through $7^{th}$ columns give the corresponding estimates for $\hat{d}_{VC}$, $\widehat{ERM}_{1}$, $\widehat{ERM}_{2}$, AIC, BIC, and CV for $\sf{Wheat}$.}
    \begin{subtable}[h]{0.45\textwidth}
        \centering
        \begin{tabular}{ccccccc}
          Size& $\hat{d}_{VC}$ & $\widehat{ERM}_{1}$ & $\widehat{ERM}_{2}$ & AIC & BIC & CV\\
        %\hline \hline
        \midrule
        1&  13&    \bf{7}&   \bf{11}& \bf{-9160}& \bf{-9142}& \bf{0.001801809} \\
		2&  14&    7&   11& -9159& -9136& 0.001802107 \\
		3&  13&    7&   11& -9159& -9130& 0.001802470 \\
		4&  13&    7&   11& -9159& -9124& 0.001803912 \\
		5&  13&    7&   11& -9159& -9118& 0.001803933 \\
		6&  14&    7&   11& -9157& -9110& 0.001805230 \\
		7&  14&    7&   11& -9156& -9103& 0.001805349 \\
		8&  14&    7&   11& -9158& -9099& 0.001806966 \\
		9&  14&    7&   11& -9156& -9091& 0.001807176\\
        10& 14&    7&   11& -9154& -9084& 0.001808455 \\
		11& 13&    7&   11& -9153& -9076& 0.001808248 \\
		12& 13&    7&   11& -9151& -9069& 0.001808966 \\
		13& 14&    7&   11& -9150& -9061& 0.001810670 \\
		14& \bf{14}&    7&   11& -9148& -9054& 0.001812884 \\
		15& 13&    7&   11& -9147& -9047& 0.001813791 \\
		16& 13&    7&   11& -9145& -9039& 0.001818615 \\
		17& 13&    7&   11& -9144& -9032& 0.001819879 \\
		18& 13&    7&   11& -9144& -9027& 0.001823882
        \end{tabular}
        %\caption{Model sizes from 1 to 9}
        \label{Loc8set1}
    \end{subtable}
    \iffalse
    \hfill
    \begin{subtable}[h]{0.45\textwidth}
        \centering
        \begin{tabular}{ccccccc}
          Size& $\hat{d}_{VC}$ & $\widehat{ERM}_{1}$ & $\widehat{ERM}_{2}$ & AIC & BIC & CV \\
        %\hline \hline
        \midrule
       	10& 14&    7&   11& -9154& -9084& 0.001808455 \\
		11& 13&    7&   11& -9153& -9076& 0.001808248 \\
		12& 13&    7&   11& -9151& -9069& 0.001808966 \\
		13& 14&    7&   11& -9150& -9061& 0.001810670 \\
		14& 14&    7&   11& -9148& -9054& 0.001812884 \\
		15& 13&    7&   11& -9147& -9047& 0.001813791 \\
		16& 13&    7&   11& -9145& -9039& 0.001818615 \\
		17& 13&    7&   11& -9144& -9032& 0.001819879 \\
		18& 13&    7&   11& -9144& -9027& 0.001823882
        \end{tabular}
        %\caption{Model sizes from 10 to 18}
        \label{Loc8set2}
    \end{subtable}
    
    \begin{subtable}[h]{0.45\textwidth}
        \centering
        \begin{tabular}{ccccccc}
          Size& $\hat{d}_{VC}$ & $\widehat{ERM}_{1}$ & $\widehat{ERM}_{2}$ & AIC & BIC & CV \\
        %\hline \hline
        \midrule
        19& 13&    7&   11& -9143& -9019& 0.001823958
		20& 13&    7&   11& -9141& -9012& 0.001830661\\
		21& 13&    7&   11& -9140& -9005& 0.001833042\\
		22& 13&    7&   11& -9139& -8998& 0.001836315\\
		23& 14&    7&   11& -9137& -8991& 0.001838676\\
		24& 14&    7&   11& -9139& -8986& 0.001841810\\
		25& 14&    7&   11& -9137& -8978& 0.001845224\\
		26& 14&    7&   11& -9146& -8981& 0.001842648\\
		27& 14&    7&   11& -9144& -8973& 0.001846386
        \end{tabular}
        \caption{Model sizes from 19 to 27}
        \label{Loc8set3}
    \end{subtable}
    \fi
    \label{MultiLocAnal}
\end{table}

First, we note there is no variability in $\widehat{ERM}_{1}$ and $\widehat{ERM}_{2}$ so following standard usage, they select a model with TKWT $\cdot$ KPSM as the only explanatory variable. Likewise, AIC, BIC, and CV suggest the one term model. However, $\hat{d}_{VC} = 14$ means the VC dimension chooses the model with the first 14 terms from the ordered list. SCAD and ALASSO both give the one term model
\begin{equation}
\label{Loc8SCAD}
    \widehat{YIELD}  = 3.43 + 1.12\cdot TKWT\cdot KPSM.
\end{equation}
the same as $\widehat{ERM}_{1}$, $\widehat{ERM}_{2}$, AIC, BIC and CV. 
Thus the only reasonable model is the one chosen by $\hat{d}_{VC}$ and we note that the model here has 14 terms whereas when the data were simpler in Subsubsec \ref{locyearanal} the $\hat{d}_{VC}$ model had only six terms in accord with intuition.

\subsection{Analysis of {\sf{Wheat}} Using Phenotypic Data and the Design Structure}
\label{Design}

Our objective in this section is to take the design structure into account and see its impact on the values of the VC dimension and hence on the chosen model. As before, we implement our method on the Lincoln 1999 and Lincoln 2001 data sets.
Including design variables forces us to use a more complicated bootstrap procedure that would otherwise be sufficient. Thus, to implement our method here, we perform a {\it{restricted}} bootstrap. Specifically, we bootstrap in each level of the design variable (incomplete block) so that each half data set has all levels of the design structure. We do this to maintain the design structure and its effects.
Since the blocking variable has 32 different unordered categories, we cannot use it to compute the correlation. So to include phenotypic variables in the models, we use the order of inclusion from the analyses of the Lincoln 1999, 2001 and multi-location data sets. For each model, we then include the block effect. For instance, in Lincoln 1999, the first term that gets into the model is TKWT$\cdot$ KPSM and the model that we fit has TKWT$\cdot$KPSM plus $IBLK_{j}, j = 1,2, \cdots, 32$ ($IBLK_{j}$ is the $j^{th}$ incomplete block). So, the size of each candidate model increases by 32.

\subsubsection{Estimation of the VC dimension using phenotypic covariates and the design structure }
 \label{envdesign}
In this section, we estimate the VC dimension by combining the phenotypic covariates and  the design structure (incomplete block) for Lincoln 1999 and Lincoln 2001.

\subsubsection*{Analysis of Lincoln 1999 data with the design structure}
\label{Anal199design}
Since the variable (incomplete block) representing the design is a class variable, we did not find its correlation with YIELD. So, we use the same order of inclusion as in the analysis of Lincoln 1999 without taking the design structure into account.

\begin{table}[h]
    \caption{Estimation of $\hat{d}_{VC}$, $\widehat{ERM}_{1}$, $\widehat{ERM}_{1}$, AIC and BIC in Lincoln 1999 using the design structure.}
    \begin{subtable}[h]{0.45\textwidth}
        \centering
        \begin{tabular}{ccccccc}
          Size& $\hat{d}_{VC}$ & $\widehat{ERM}_{1}$ & $\widehat{ERM}_{2}$ & AIC & BIC & CV \\
        %\hline \hline
        \midrule
        1&  7&    \bf{8}&   \bf{19}& \bf{-685}& \bf{-661}& 0.01097206 \\
		2&  7&    8&   19& -684& -655& 0.01096831 \\
		3&  7&    8&   19& -682& -650& \bf{0.01096340} \\
		4&  7&    8&   19& -681& -645& 0.01100318 \\
		5&  7&    8&   19& -687& -646& 0.01146253 \\
		6&  7&    8&   19& -685& -641& 0.01150139 \\
		7&  \bf{7}&    8&   19& -683& -635& 0.01153652 \\
		8&  7&    8&   19& -681& -629& 0.01155855 \\
		9&  7&    8&   19& -680& -624& 0.01155742
        \end{tabular}
        %\caption{Model sizes from 1 to 9}
        \label{Loc1set11}
    \end{subtable}
    \iffalse
    \hfill
    \begin{subtable}[h]{0.45\textwidth}
        \centering
        \begin{tabular}{ccccccc}
          Size& $\hat{d}_{VC}$ & $\widehat{ERM}_{1}$ & $\widehat{ERM}_{2}$ & AIC & BIC & CV \\
        %\hline \hline
        \midrule
        10& 7&    8&   19& -678& -618& 0.01156724 \\
		11& 7&    8&   19& -676& -612& 0.01158141 \\
		12& 7&    8&   19& -675& -607& 0.01158830 \\
		13& 7&    8&   19& -674& -601& 0.01159598 \\
		14& 7&    8&   19& -676& -599& 0.01160352 \\
		15& 7&    8&   19& -674& -594& 0.01162016 \\
		16& 7&    8&   19& -673& -588& 0.01161946 \\
		17& 7&    8&   19& -671& -582& 0.01162408 \\
		18& 7&    8&   19& -669& -576& 0.01163385
        \end{tabular}
        \caption{Model sizes from 10 to 18}
        \label{Loc1set21}
    \end{subtable}
    \begin{subtable}[h]{0.45\textwidth}
        \centering
        \begin{tabular}{cccccc}
          Size& $\hat{d}_{VC}$ & $\widehat{ERM}_{1}$ & $\widehat{ERM}_{2}$ & AIC & BIC & CV\\
        %\hline \hline
        \midrule
        19& 7&    8&   19& -667& -570& 0.01164899
		20& 7&    8&   19& -666& -565& 0.01165369 \\
		21& 7&    8&   19& -664& -559& 0.01168737 \\
		22& 7&    8&   19& -662& -553& 0.01171772 \\
		23& 7&    8&   19& -661& -548& 0.01172016 \\
		24& 7&    8&   19& -659& -542& 0.01174286 \\
		25& 7&    8&   19& -658& -537& 0.01174651 \\
		26& 7&    8&   19& -656& -531& 0.01178405 \\
		27& 7&    8&   19& -656& -527& 0.01177444
        \end{tabular}
        \caption{Model sizes from 19 to 27}
        \label{Loc1set31}
    \end{subtable}
    \fi
    \label{Loc11}
\end{table}
It is easy to see that Tables \ref{Loc11} and \ref{Loc3} are qualitatively identical. That is the design structure had essentially no effect on $\hat{d}_{VC}$, $\widehat{ERM}_{1}$, $\widehat{ERM}_{1}$, AIC, BIC, or CV.
Likewise, the optimal model chosen by SCAD is the same as \eqref{Loc3SCAD}. The model chosen by ALASSO is 
\begin{equation}
\label{DesignALASSO01}
    \widehat{YIELD}  = 1.93 + 0.48\cdot KPSM \cdot TKWT + 0.04\cdot SPSM \cdot KPS,
\end{equation}
trivially different from \eqref{Loc3ALASSO}. Our conclusions are therefore the same as in Subsec. \ref{Lin1999}

\subsubsection*{Analysis of Lincoln 2001 data set using phenotypic covariates and the design structure}
\label{desinlin2001}

The inclusion of covariates is the same here as in Subsubsec. \ref{AnalLin2001}.
Parallel to Table \ref{Loc11}, the columns of Table \ref{Loc12} give the candidate model sizes and their corresponding $\hat{d}_{VC}$, $\widehat{ERM}_{1}$, $\widehat{ERM}_{2}$, AIC, BIC, and CV values.

\begin{table}[h]
    \caption{Estimation of $\hat{d}_{VC}$, $\widehat{ERM}_{1}$, $\widehat{ERM}_{1}$, AIC and BIC in Lincoln 2001 using the design structure.}
    \begin{subtable}[h]{0.45\textwidth}
        \centering
        \begin{tabular}{ccccccc}
          Size& $\hat{d}_{VC}$ & $\widehat{ERM}_{1}$ & $\widehat{ERM}_{2}$ & AIC & BIC & CV\\
        %\hline \hline
        \midrule
        1&  7&    \bf{5}&   \bf{14}& \bf{-1025}& \bf{-891}& \bf{0.003523772} \\
		2&  7&    5&   14& -1023& -886& 0.003523852 \\
		3&  7&    5&   14& -1022& -880& 0.003528050 \\
		4&  7&    5&   14& -1021& -876& 0.003531868 \\
		5&  7&    5&   14& -1019& -870& 0.045582343 \\
		6&  7&    5&   14& -1019& -866& 0.042056585 \\
		7&  \bf{7}&    5&   14& -1017& -860& 0.041950879 \\
		8&  7&    5&   14& -1017& -856& 0.042716568 \\
		9&  7&    5&   14& -1016& -851& 0.042944324
        \end{tabular}
        %\caption{Model sizes from 1 to 9}
        \label{Loc1set112}
    \end{subtable}
    \iffalse
    \hfill
    \begin{subtable}[h]{0.45\textwidth}
        \centering
        \begin{tabular}{ccccccc}
          Size& $\hat{d}_{VC}$ & $\widehat{ERM}_{1}$ & $\widehat{ERM}_{2}$ & AIC & BIC & CV \\
        %\hline \hline
        \midrule
        10& 7&    5&   14& -1014& -845& 0.043503630 \\
		11& 7&    5&   14& -1014& -841& 0.079361885 \\
		12& 7&    5&   14& -1013& -837& 0.083019021 \\
		13& 7&    5&   14& -1012& -831& 0.082959969 \\
		14& 7&    5&   14& -1012& -827& 0.064653613 \\
		15& 7&    5&   14& -1010& -822& 0.062946594 \\ 
		16& 7&    5&   14& -1009& -816& 0.064996174 \\
		17& 7&    5&   14& -1007& -811& 0.068105920 \\
		18& 7&    5&   14& -1005& -805& 0.067402025
        \end{tabular}
        \caption{Model sizes from 10 to 18}
        \label{Loc1set212}
    \end{subtable}
    \begin{subtable}[h]{0.45\textwidth}
        \centering
        \begin{tabular}{ccccccc}
          Size& $\hat{d}_{VC}$ & $\widehat{ERM}_{1}$ & $\widehat{ERM}_{2}$ & AIC & BIC \\
        %\hline \hline
        \midrule
        19& 7&    5&   14& -1004& -800& 0.067731907 \\
		20& 7&    5&   14& -1003& -795& 0.065494951 \\
		21& 7&    5&   14& -1002& -790& 0.066406079 \\
		22& 7&    5&   14& -1003& -787& 0.058171279 \\
		23& 7&    5&   14& -1004& -784& 0.044830019 \\
		24& 7&    5&   14& -1002& -778& 0.047369675 \\
		25& 7&    5&   14& -1001& -773& 0.050556098 \\
		26& 7&    5&   14& -1015& -783& 0.007945095 \\
		27& 7&    5&   14& -1015& -783& 0.007945095
        \end{tabular}
        \caption{Model sizes from 19 to 27}
        \label{Loc1set312}
    \end{subtable}
    \fi
    \label{Loc12}
\end{table}
It is easy to see that Tables \ref{Loc2} and \ref{Loc12} are qualitatively identical except that here $\hat{d}_{VC} =7$ whereas $\hat{d}_{VC} = 6$ in Table \ref{Loc12} a difference that can be ascribed to random variation. Likewise, the model chosen by SCAD is the same as \eqref{Loc2SCAD}.
The model chosen by ALASSO is
\begin{equation}
\label{DesignALASSO2001}
    \widehat{YIELD}  = 4.67 + 0.4\cdot KPSM \cdot TKWT,
\end{equation}
trivially different from \eqref{Loc2ALASSO}.
Our conclusions are therefore the same as in Subsubsec. \ref{AnalLin2001}: The design structure had no effect on the model selection techniques and $\hat{d}_{VC}$, gave the only plausible model. 

\subsubsection{Multi-location Analysis Using the Phenotypic Covariates and Design Structure}
\label{sect42}

For this analysis, we pooled over location-year combinations and varieties. 
In the model, we included the incomplete block reflecting the design variables.
This is the same analysis as was performed in Subsubsec. \ref{MultiLoc}, but now we are including the design structure in the models on the model list. Here, the order of inclusion of variables is the same as in Subsubsec. \ref{MultiLoc}.

\begin{table}[h]
    \caption{ The column labeled size gives the number of coefficients for each model of the linear model. The $2^{nd}$ through $7^{th}$ columns give the corresponding estimates for $\hat{d}_{VC}$, $\widehat{ERM}_{1}$, $\widehat{ERM}_{1}$, AIC, BIC, and CV for multi-location analysis with design structure.}
    \begin{subtable}[h]{0.45\textwidth}
        \centering
        \begin{tabular}{ccccccc}
          Size& $\hat{d}_{VC}$ & $\widehat{ERM}_{1}$ & $\widehat{ERM}_{2}$ & AIC & BIC & CV \\
        %\hline \hline
        \midrule
        1&  13&    \bf{7}&   \bf{11}& \bf{-9160}& \bf{-9142}& \bf{0.001801809} \\
		2&  13&    7&   11& -9159& -9136& 0.001802107 \\
		3&  13&    7&   11& -9159& -9130& 0.001802470 \\
		4&  13&    7&   11& -9159& -9124& 0.001803912 \\
		5&  13&    7&   11& -9159& -9118& 0.001803933 \\
		6&  13&    7&   11& -9157& -9110& 0.001805230 \\
		7&  13&    7&   11& -9156& -9103& 0.001805349 \\
		8&  13&    7&   11& -9158& -9099& 0.001806966 \\
		9&  13&    7&   11& -9156& -9091& 0.001807176 \\
		10& 13&    7&   11& -9154& -9084& 0.001808455 \\
		11& 13&    7&   11& -9153& -9076& 0.001808248 \\
		12& 13&    7&   11& -9151& -9069& 0.001808966 \\
		13& \bf{13}&    7&   11& -9150& -9061& 0.001810670 \\ 
		14& 13&    7&   11& -9148& -9054& 0.001812884 \\
		15& 13&    7&   11& -9147& -9047& 0.001813791 \\
		16& 13&    7&   11& -9145& -9039& 0.001818615 \\
		17& 13&    7&   11& -9144& -9032& 0.001819879 \\
		18& 13&    7&   11& -9144& -9027& 0.001823882
        \end{tabular}
        %\caption{Model sizes from 1 to 9}
        \label{MultiLoc1set112}
    \end{subtable}
    \label{MultiLoc12}
\end{table}
The natural comparison is between Tables \ref{MultiLoc12} and \ref{MultiLocAnal}. It is seen that apart from random variation, they are identical.
Moreover, the sparsity methods give exactly the same results in both settings.  Thus the conclusions here are the same as in Subsubsec. \ref{MultiLoc}, namely the design variables have no impact on model selection and $\hat{d}_{VC}$ gave the only plausible model.

\subsection{Analysis of {\sf{Wheat}}  using the PhenotypicData and SNP Covariates}
\label{AnalSNP}

Our goal in this section is to see how using both the phenotypic and the SNP data will affect the estimate of VC dimension and therefore the model chosen. Since the variables representing the SNP's have missing data, we simply dropped all rows with missing SNP values and only used SNP's that were complete, i.e., no missing values, meaning we dropped columns as well. While this is not reasonable in general for data analysis, our point here is merely to demonstrate the use of VC dimension for model selection. Thus we retained only 6 SNP's (barc67, cmwg680bcd366, bcd141, barc86, gwm155, barc12). In Subsec. \ref{SNPModel} we do model selection using the phenotypic and SNP variables for Lincoln 1999 and Lincoln 2001.  In Subsec. \ref{AnalSNPMultiLoc}, we perform the corresponding multi-location analysis.

\subsubsection{Location-year Analyses Using Phenotypic and SNP Covariates}
\label{SNPModel}

In this section,  we estimate the VC dimension using phenotypic and SNP variables for the Lincoln 1999 data and then for the Lincoln 2001 data. Analogous analyses can be done for other location-year combinations but are omitted here.

\subsubsection*{Analysis of {\sf{Wheat}} in Lincoln 1999}
\label{AnalSNPLin99}
As we have done before, to estimate the VC dimension, we order the inclusion of variables in the model. We use all first and second order terms in the six phenotypic variables and all first terms in the 6 SNP. Together, this gives 33 terms. Using the absolute value of the correlation with YIELD, the order of inclusion of variables is: TKWT $\cdot$ KPSM, SPSM$^2$, KPSM, TSTWT $\cdot$ KPS, KPSM$^2$, KPS $\cdot$ HT, SPSM $\cdot$ KPS, $KPS^2$, TKWT $\cdot$ SPSM, SPSM, SPSM $\cdot$ KPSM, TSTWT$^2$, TSTWT $\cdot$ HT, KPS $\cdot$ KPSM, TSTWT $\cdot$ SPSM, KPS, SPSM $\cdot$ HT, TKWT$\cdot$ KPS, HT, KPSM $\cdot$ HT, TSTWT $\cdot$ KPSM, gwm155, HT$^2$ TSTWT, TKWT $\cdot$ HT,  bcd141, TKWT$\cdot$ SPSM, barc86, cmwg680bcd366, barc67, TKWT $\cdot$ TSTWT, TKWT$^2$, TKWT. Here the SNP's are simply denoted by their labels in the data.
The estimates of $\hat{d}_{VC}$, $\widehat{ERM}_{1}$, $\widehat{ERM}_{2}$, AIC, BIC, and CV are given in Table \ref{SNPLin1999}.

\begin{table}[ht]
    \caption{The column labeled size gives the number of coefficients for each of the linear model. 
    The $2^{nd}$ through $7^{th}$ columns give the corresponding estimates for $\hat{d}_{VC}$, $\widehat{ERM}_{1}$, $\widehat{ERM}_{2}$, AIC, BIC, and CV for $\sf{Wheat}$ in Lincoln 1999 using phenotypic and SNP variables.}
    \begin{subtable}[h]{0.45\textwidth}
        \centering
        \begin{tabular}{ccccccc}
          Size& $\hat{d}_{VC}$ & $\widehat{ERM}_{1}$ & $\widehat{ERM}_{2}$ & AIC & BIC & CV \\
        %\hline \hline
        \midrule
		1&  13&    \bf{9}&   \bf{28}& -545& \bf{-533}& \bf{0.01390512}\\
		2&  13&    9&   28& -543& -527& 0.01390611\\
		3&  13&    9&   28& \bf{-548}& -528& 0.01452534\\
		4&  13&    9&   28& -546& -523& 0.01459774\\
		5&  13&    9&   28& -546& -518& 0.01459255\\
		6&  13&    9&   28& -544& -512& 0.01461276\\
		7&  13&    9&   28& -542& -506& 0.01468128\\
		8&  13&    9&   28& -541& -501& 0.01468513\\
		9&  13&    9&   28& -539& -496& 0.01470951\\
		10& 13&    9&   28& -537& -490& 0.01471344\\
        11& 13&    9&   28& -537& -486& 0.01473980\\
		12& 13&    9&   28& -535& -480& 0.01480711\\
		13& \bf{13}&    9&   28& -533& -474& 0.01483929\\
		14& 13&    9&   28& -535& -472& 0.01483715\\
		15& 13&    9&   28& -533& -467& 0.01487698\\
		16& 13&    9&   28& -532& -462& 0.01491353\\
		17& 13&    9&   28& -530& -456& 0.01492661\\
		18& 13&    9&   28& -529& -450& 0.01494602\\
		19& 13&    9&   28& -528& -446& 0.01512149\\
		20& 13&    9&   28& -527& -441& 0.01506091
        \end{tabular}
        %\caption{Model sizes from 1 to 10}
        \label{Lin991}
    \end{subtable}
    \label{SNPLin1999}
\end{table}
Table \ref{SNPLin1999} shows that $\hat{d}_{VC} = 13$, AIC chooses the three term model and $\widehat{ERM}_{1}$, $\widehat{ERM}_{2}$, BIC, and CV all choose the model with one explanatory variable TKWT $\cdot$ SPSM. 
SCAD gives the model
\begin{equation}
\label{SNPLin99SCAD}
    \widehat{YIELD}  = 1.90 + 0.49\cdot TKWT\cdot KPSM
\end{equation}
and ALASSO gives the model
\begin{equation}
\label{Loc8ALASSO}
    \widehat{YIELD}  = 1.9 + 0.46\cdot TKWT\cdot KPSM + 0.02\cdot SPSM \cdot KPS.
\end{equation}

Since we have ruled out the one term model, we are left with the 13 term model from $\hat{d}_{VC}$, the three term model from AIC, and the the two term model from ALASSO. The extra term in the ALASSO model has a very small coefficient, and so is essentially indistinguishable from the one term model. So eliminating it from further consideration leaves only the VC dimension and AIC models as possibly reasonable. Note that neither include any SNP's and it is hard to argue that either model is better. For instance, the extra 10 terms in the VC dimension model may or may not contribute more variation than any increase in bias from using the 3 term AIC model. On the other hand, we see that including SNP's in the model list affects the size of the models chosen cf. Table \ref{Loc3} in Subsec \ref{Lin1999}. On an intuitive basis, we can recall that AIC is associated with prediction in contrast to 
$d_{VC}$ which is a complexity criterion. Thus the comments at the end of Subsubsec. \ref{Lin1999} apply here.

\subsubsection*{Analysis of {\sf{Wheat}} in Lincoln 2001}
\label{AnalSNPLin01}

The analysis here is similar to what we have done before. To estimate the VC dimension, we first nest our model list using correlation. The order of inclusion of variables is: 
TKWT $\cdot$ KPSM, TSTWT $\cdot$ KPSM, KPSM, SPSM $\cdot$ KPS, KPSM$^2$,     
SPSM $\cdot$ KPSM, TKWT $\cdot$ SPSM, TSTWT $\cdot$ SPSM, SPSM, KPSM $\cdot$ HT, SPSM$^2$, KPS $\cdot$ KPSM, SPSM $\cdot$ HT, TSTWT, TSTWT$^2$, barc67, TKWT $\cdot$ TSTWT, barc86, TKWT, TKWT$^2$, cmwg680bcd366, bcd141, TKWT $\cdot$ KPS, gwm155, TSTWT $\cdot$ KPS, barc12, KPS$^2$, KPS, TKWT $\cdot$ HT, KPS $\cdot$ HT, TSTWT $\cdot$ HT, HT, HT$^2$. So, we consider 33 nested models.
Table 2 in \cite{Supplementals:2017} shows that any value of $\hat{d}_{VC}$ from 31 to 33 may be reasonable. Table 2 in \cite{Supplementals:2017} shows that $\widehat{ERM}_{1}$, AIC, BIC, and CV suggest the one term model, while $\widehat{ERM}_2$ suggests the 5 term model.
SCAD chooses the model 
\begin{eqnarray}
\widehat{YIELD} = 4.66 + 0.39 \cdot TKWT\cdot KPSM,
\end{eqnarray}
but we were unable to identify a model using ALASSO because of convergence problems  in the implementation of the code we were using.

In this more complicated setting i.e., including SNP's,  we therefore have one method, VC dimension, that chooses the largest model, four methods that choose the smallest model, one method that chooses the 5 term model, and one method, ALASSO, that fails to choose a model at all.
As before, we rule out the smallest model because we know some of the SNP's have an effect on YIELD. We also rule out the 5 term model because it has no SNP's.
So, by default, the VC dimension chosen models are the most credible. 

Overall, however, one can argue that the data are
so complex that all the methods are breaking down:  Choosing the smallest model or the largest model
are instances of the bail-out or bail-in effect, respectively, noted in  \cite{clarke2013}. In such cases, the
largest models are the ones to be chosen but with caution because the bail-in effect suggests that even the largest models are missing important terms.   
Again, the VC dimension technique is giving a  better result than
other the other techniques but none of them are really satisfactory.  In model selection, one tries to 
choose a model list so the true model, assuming it exists, does not lie on the boundary
of the list but is a sort of `interior point'. Thus this analysis suggests that the model list we have used is inadequate.

\subsubsection{Multi-location Analysis Using the Phenotypic and SNP Covariates}
\label{AnalSNPMultiLoc}

In this subsection, we pooled over all locations and dropped all variables or observations with missing data points. The sample size was thereby reduced to 2631. As before, we ordered the inclusion of covariates in our model by the absolute value of the correlation. The order of inclusion of terms is as follows:
TKWT $\cdot$ KPSM,  TSTWT $\cdot$ KPS,  KPSM, SPSM$^2$, KPS $\cdot$ HT,    
KPSM $^2$, TKWT $\cdot$ SPSM, TKWT $\cdot$ HT , TSTWT , SPSM $\cdot$ KPS , 
  KPS$^2$, TSTWT$^2$,  SPSM,  SPSM $\cdot$ KPSM, TSTWT $\cdot$ HT,  
  TKWT $\cdot$ TSTWT, TKWT ,  TSTWT$\cdot$ KPSM, TKWT$^2$, TKWT $\cdot$ KPS , 
  TSTWT $\cdot$ SPSM, TKWT $\cdot$ SPSM, HT$^2$ , KPS $\cdot$ KPSM,   HT,           
  barc67, KPSM$ \cdot$ HT, cmwg680bcd366, bcd141, barc86,     
  gwm155, KPS, SPSM $\cdot$ HT. 
With this order, we fit 32 different models because the most complex model with 33 terms did not converge, possibly because of muli-colinearity. 

Table 3 in \cite{Supplementals:2017} shows that, roughly, $\hat{d}_{VC}$ is suggesting the model of size 24 may be 
most reasonable but there is little apttern to the values sugesting any inference is unreliable. Table 3 in \cite{Supplementals:2017} also suggests AIC, BIC and CV are choosing a one term model.
The other two complexity methods,  $\widehat{ERM}_{1}$
and $\widehat{ERM}_{2}$, formally suggest model sizes of 7 and 15, resp.
However, the values in the column look more like fandom fluctuations rather than a real signal.
Consequently, these methods may be suggesting the one-term model.  The fact that they are not giving
a clear answer means they are breaking down.
SCAD and ALASSO choose the one term model and have identical parameter estimates. That is, both lead to 
\begin{equation}
\label{DesignSCADSNIP}
    \widehat{YIELD}  = 3.43 + 1.12\cdot TKWT\cdot KPSM.
\end{equation}

Thus, $\widehat{ERM}_{1}$, AIC, BIC, CV, SCAD, and ALASSO pick models we don't believe while $\widehat{ERM}_{1}$, $\widehat{ERM}_{2}$, and $d_{VC}$ are not giving a clear answers.  As in 
Subsec. \ref{AnalSNPMultiLoc}, it seems likely that the model list is inadequate.   This is a
common result when dealing with real data.

\subsection{Analysis of {\sf{Wheat}} Using Phenotypic and SNP Covariates, and the Design Structure}
\label{AnalPhenoSNPdsign}

Our objective in this section is to take the phenotypic variables, the SNP variables and the design variables
into account and assess the impact of the design variables on the VC dimension, as a contrast to the analyses of Subsec. \ref{AnalSNP}. As before, we implement our method on the Lincoln 1999 and Lincoln 2001 data sets and, as in Subsubsec. \ref{sect42}, we used a restricted bootstrap for the multi-location data.  The blocking variable continues to have 32 different unordered categories, so we cannot use it to compute correlations. However, the SNP variables are numerical so we use them as well when we order the inclusion of covariates in the model. So, to include phenotypic variables and SNP variables in the models, we use the order of inclusion from the analyses of the Lincoln 1999, the Lincoln 2001 and the multi-location data sets of Subsec. \ref{AnalSNP}. Also as in Subsec. \ref{Design}, for each model, we include the block effect so the size of each candidate model increases by 32.
In Subsubsec. \ref{AnlaphenoSNPdesignloc}, we do model selection using the phenotypic, SNP and design variables for Lincoln 1999 and Lincoln 2001. In Subsubsec. \ref{AnlaphenoSNPdesignmultiloc}, we perform a multi-location analysis.

\subsubsection{Analysis of {\sf{Wheat}} Using Phenotypic and SNP Covariates, and the Design Structure in Each Environment}
\label{AnlaphenoSNPdesignloc}
Again, we estimate the VC dimension using  phenotype, SNP and design covariates combined for Lincoln 1999 and Lincoln 2001. The same analysis can be done for the other environments but they are omitted here.

\subsubsection*{Analysis of {\sf{Wheat}} in Lincoln 1999}
\label{analphenosnpdesign1999}

The order of inclusion of terms here is the same as in Sec. \ref{AnalSNPLin99}.

\begin{table}[h]
    \caption{The column labeled size gives the number of coefficients for each of the linear model. 
    The $2^{nd}$ through $7^{th}$ columns give the corresponding estimates for $\hat{d}_{VC}$, $\widehat{ERM}_{1}$, $\widehat{ERM}_{2}$, AIC, BIC, and CV for $\sf{Wheat}$ in Lincoln 1999 using phenotypic, SNP and design variables.}
    \begin{subtable}[h]{0.45\textwidth}
        \centering
        \begin{tabular}{ccccccc}
          Size& $\hat{d}_{VC}$ & $\widehat{ERM}_{1}$ & $\widehat{ERM}_{2}$ & AIC & BIC & CV \\
        %\hline \hline
        \midrule
        1&  13&    \bf{9}&   \bf{28}& -539& \bf{-515}& \bf{0.01390512}\\
		2&  13&    9&   28& -537& -509& 0.01390611\\
		3&  13&    9&   28& \bf{-542}& -510& 0.01452534\\
		4&  13&    9&   28& -541& -505& 0.01459774\\
		5&  13&    9&   28& -540& -500& 0.01459255\\
		6&  13&    9&   28& -538& -495& 0.01461276\\
		7&  13&    9&   28& -536& -489& 0.01468128\\
		8&  13&    9&   28& -535& -484& 0.01468513\\
		9&  13&    9&   28& -533& -478& 0.01470951\\
		10& 13&    9&   28& -531& -472& 0.01471344\\ 
        11& 13&    9&   28& -531& -468& 0.01473980\\
		12& 13&    9&   28& -529& -462& 0.01480711\\
		13& \bf{13}&    9&   28& -527& -456& 0.01483929\\
		14& 13&    9&   28& -529& -455& 0.01483715\\
		15& 13&    9&   28& -527& -449& 0.01487698\\
		16& 13&    9&   28& -526& -444& 0.01491353\\
		17& 13&    9&   28& -524& -438& 0.01492661\\
		18& 13&    9&   28& -523& -432& 0.01494602\\
		19& 13&    9&   28& -522& -428& 0.01512149\\
		20& 13&    9&   28& -521& -423& 0.01506091\\
        \end{tabular}
        %\caption{Model sizes from 1 to 10}
        \label{phenosnpdesiGene1}
    \end{subtable}
    \label{phenosnpdesiGene}
\end{table}

Examination of Tables \ref{phenosnpdesiGene} and \ref{SNPLin1999} shows that they are qualitatively identical. That is, the design structure has essentially no effect on $\hat{d}_{VC}$, $\widehat{ERM}_{1}$, $\widehat{ERM}_{2}$, AIC, BIC, and CV, for $\sf{Wheat}$. Likewise, the model chosen by SCAD is the same as \eqref{SNPLin99SCAD}. The model chosen by ALASSO is 
\begin{equation}
    \label{phenosnpdesignalasso}
    \widehat{YIELD} = 1.90 + 0.46\cdot TKWT\cdot KPSM + 0.02\cdot SPSM \cdot KPS,
\end{equation}
trivially different from \eqref{Loc8ALASSO}. Our conclusions are therefore the same as in Subsec. \ref{AnalSNPLin99}.

\subsubsection*{Analysis of {\sf{Wheat}} in Lincoln 2001}
\label{phenosnpdesignanal2001}

The analysis here is similar to that of Subsubsec. \ref{AnalSNPLin01}. 
For this class of models, we fit 32 models because the most complex model with 33 variables did not converge. The models suggested by $\hat{d}_{VC}$, $\widehat{ERM}_{1}$, $\widehat{ERM}_{2}$, AIC, BIC, and CV are the model with 33, 1, 7, 1, 1, and 1 terms respectively in Table 4 in \cite{Supplementals:2017}. 
It is natural to compare these results with those from Table 2 from \cite{Supplementals:2017}. Table 2 suggests model with 33, 1, 5, 1, 1, 1 terms respectively. So, neglecting random variation, these results are qualitatively identical.
Thus, the inclusion of design variables has no effect on the models chosen. The SCAD model is also the same as in Subsubsec. \ref{AnalSNPLin01}, but our implementation of ALASSO did not converge. Nevertheless, overall, the conclusions we found in Subsubsec. \ref{AnalSNPLin01} remain true here. 

\subsubsection{Multi-location Analysis with Phenotypic and SNP Covariates and the Design Structure}
\label{AnlaphenoSNPdesignmultiloc}

As in Subsubsec. \ref{AnalSNPMultiLoc}, we pooled over locations and dropped all variables or observations with missing data values thereby reducing the sample size to 2631. Also, the order of inclusion of variables in the model is the same as in Subsec. \ref{AnalSNPMultiLoc}. 

From Table 5 in \cite{Supplementals:2017}, 
we see that AIC, BIC, and CV choose the one term model. Under conventional usage, $\widehat{ERM}_{1}$  chooses the model of size six. However, $\widehat{ERM}_{1}$ fluctuates between 7 and 8. Since we regard this as natural variability, we may instead be led to surmise $\widehat{ERM}_{1}$ is actually choosing the one term model. $\widehat{ERM}_{2}$ chooses the model of size 14, however,  $\widehat{ERM}_{2}$ fluctuates between 15 and 18 again suggesting a one term model may be appropriate.
$\hat{d}_{VC}$ chooses the model with 24 terms. The model chosen by both SCAD and ALASSO continues to be the one term model \eqref{DesignSCADSNIP}. Under this reasoning, 
all the methods are breaking down because $\hat{d}_{VC}$ does not have any obvious pattern.
Thus, again, this suggests the model list is inadequate.

It is natural to compare Tables 5 and 3 in \cite{Supplementals:2017}. The numerical behaviour of all six methods is identical and, though surprising, does not seem to be an error.  The same holds for SCAD and ALASSO. Thus, again, design variables seem not to influence model selection and $\hat{d}_{VC}$ provide the most credible model.

\section{Conclusions}
\label{GenreConcl}

A concise summary of the contributions in this paper is as follows.
Sec. \ref{sec:Ext:Vap:Bounds} presents the derivation of the objective function we used to estimate the VC dimension.  It is essentially an upper bound on the expected difference between two losses
that we have defined as $\Delta_m$ or $\Delta$, where the $m$ indicates the discretization of 
the loss function for a regression problem.   
In Subsec.  \ref{estimator} we give a procedure that uses our upper bound, 
nonlinear regression treating sample sizes as design points, a data driven
estimator of $\Delta_m$, two bootstrapping steps in sequence, and an
optimization over an arbitrary constant to form an 
of an estimator for the VC dimension.   While this sounds complex, in practice the
computations can usually be done in minutes on a regular laptop.

Even though we only have an upper bound on $\Delta_m$,
in Sec. \ref{ProofOfConsistency} we are able to give conditions under which our
estimator is consistent.  This is circumstantial evidence that our upper bound is tight
because otherwise it would be surprising to have consistency.

Most importantly, we have done an extensive comparison of our estimator of
VC dimension as a model selection method with seven established model selection methods,
namely two forms of empirical risk minimization, AIC, BIC, CV, and two sparsity criteria.
We did this for the special case of linear models but the same reasoning can be used for
any class of nonlinear models e.g., trees, for which the VC dimension can be identified.
We also gave one example of how our estimator for VC dimension performs better than the
original estimator in \cite{Vapnik:etal:1994}.   Other examples can be found in \cite{Merlin:etal:2017}.

More specifically, in Secs. \ref{chap:Numerical:Studies}, \ref{example}, and \ref{Application}, we 
used our technique
with simulated data, benchmark data, and data from a designed agronmic 
experiment, resp.   Since the actual model is known for simulated data, we used those examples to
verify our technique works well and to suggest guidelines for selecting design points.
When we used our method on two benchmark data sets we argued that, as in the simulations, using estimated VC dimension to do model selection gives better performance than 
the other standard methods.  More precisely, on these data sets, we argue that the results of our method
are as credible as the results from the other methods and in a few cases outperform the
established methods.  The key conceptual point here is that our method is geared towards model identification
via complexity whereas other methods such as CV, BIC, or sparsity methods seem to be geared
more to parsimony (even though CV has a prediction interpretation).  In terms of performance, 
our method resembles AIC.  However, AIC
has an optimality property under prediction which is conceptually different from model identification.  Indeed, the best predictor may not be the true model
when the true model has many small terms, so we think of our method not as a prediction-optimal
so much as able to detect the presence of small terms that other methods might miss.

The `real' data anayzed in Sec.  \ref{Application} is representative of a class of 
data that continues to be collected with
regularity in many agronomic (and other) settings.
Our analysis of this data is extensive first because this class of
data set includes two types of measured explanatory variables as well as variables defining the
design and, second, bcause we use this example to shed further light on what our method is doing.
We have two general conclusions.  First,  the design variables do not affect the model selection.
Otherwise put, the design variables do not affect the complexity of the data collected.   This seems
counterintuitive until one realizes that experimental design is used above all to increase the 
estimability and comparability of parameters and this goal is independent of data complexity
which the VC dimension assesses.   Second, although we have not formally assessed the robustness
of our method, it essentially never gave models that were wildly wrong even though it seemed more
sensitive to small effects.   We infer this because often the model chosen using VC dimension
was of moderate or large size within the model list while the other methods often gave one-term models.
A final observation is that the details of
these analyses suggested that the more data, or at least, more data types were present,
the higher the variability in estimates seemed to be.  This is counterintuitive since more
data is associated with more information and hence less variability.   A resolution to these
competing properties may be that the amount of data has not increased sufficently relative
to the complexity of the data generator that
the information in it has reduced the variability.  A related point is that our method `bails in' i.e., defaults to 
the largest model, more often than other methods which `bail out' i.e.,
default to the simplest model.  When this happens it suggests the model list is not
rich enough to provide good model identification.

There are two general issues that our work underscores: i) the variability in model selection can be 
high and ii) the relationship among data complexity, model complexity, and model list selection.

Treating these in turn, recall that under the usual regularity conditions, the rate of convergence 
of parameter estimators is ${\cal{O}}(1/\sqrt{n})$.  No such general statement can be made 
about model selection since convergence rates depend delicately on how close the chosen model is to the
true model (assuming it exists) and this in turn depends on the richness of the model space.  
Despite this, one can observe that, empirically, the rate of convergence
of chosen models to true models is slower than for parameter estimators to their true values (unless
the model space can be represented parametrically). 

For instance, in many of the examples here, the BIC chooses the
model with a single explanatory variable reflecting its tendency to parsimony since the number
of parameters is relatively heavily penalized.  However, this neglects the fact that in many cases the 
BIC value for a one term model and a two term model are very close, e.g., in Table 3 of
\cite{Supplementals:2017}, 
BIC(1 term) = -9160 and BIC(2 term) = BIC(3 term) = -9159 and one can argue that the
BIC decreases gradually until 30 explanatory variables are included in the model suggesting 30
is a sort of `breakpoint' in the nested sequence of models. In some sense, this takes variability 
into account and suggests 30 explanatory variables even though the best single model
has one term.  Of course, this is ad hoc and does not have any
justification in terms of the underlying variability of a model selection principle.
To get around  this, often an Occam's window (see \cite{occamwindow}) approach is taken i.e., 
rather than just picking the single model with the highest BIC a set of the form
$$
\left\{ m_\ell \mid \| m_\ell - m^* \| \leq t  \right\}
$$
is used for prediction, where $m_\ell$ denotes the model with $\ell$ 
explanatory variables, $m^*$ is the BIC 
optimal model, and $t$ is a threshold.  The same principles apply to AIC, and arguably to SCAD and 
ALASSO by defining neighborhoods in terms the decay parameter $\lambda$.  The limitation of
this approach is that while it is good for prediction, it's not clear how appropriate
it is for model identification.

Complexity based methods such as VC dimension, $\widehat{ERM}_{1}$, and $\widehat{ERM}_{2}$ have analogous properties even
though they are restricted to integers.  In these cases, it is harder to ascertain whether a change of,
say, 7 to 8 or the reverse, is particularly meaningful.  However, in some cases, the pattern of values of the
procedure was so strong e.g., often for CV, that regarding the values as chance fluctuations was unreasonable.
In all cases, when the rule for using a model selection procedure could not be reasonably followed, we tried to
infer what the method was trying to do by ignoring what seemed like extraneous variability.
Throughout our work, all of our comparisons have been motivated by the goal of selecting 
a single model even when a model averaging strategy would be better for predictive purposes.

We note that in many cases, the constancy, or near constancy, of the VC dimension, $\widehat{ERM}_{1}$, or $\widehat{ERM}_{2}$ indicates they are trying to encapsulate the concept of the complexity of the data, a different concept from the complexity of the model.  This supports the interpretation of them as reliable indicators of the 
complexity of the data generator.

Turning to the relationships between data and model complexity, we have noted that
typical design principles for agronomic data are intended to ensure parameters can be estimated
and compared effectively and this does not affect data complexity.
On the other hand, including an extra data type can increase the complexity of the data generator, 
as when SNP's were included in Subsecs. \ref{AnalSNP} and \ref{AnalPhenoSNPdsign}.  
We saw that when the complexity of the data exceeds the ability
of the models to encapsulate it, we can get bail-in and bail-out, where a model selection principle 
defaults to the biggest or smallest model, respectively.  
Choosing the smallest model reflects excess parsimony because the models can't cantain the data
while choosing the largest model suggests the models on the list are only able to capture a portion
of the information in the data.
One tries to avoid these cases by choosing a model list that has the true model, or a good approximation it it,
 somewhere in the
`interior' and at the same time is not too close to other models that might confuse a model
selection technique.  This permits robustness verifications and and other comparisons that are not possible if the chosen
model is on the `boundary' of the model list or has too many neighbors.   These principles for model list selection
seem to be under-appreciated.

The comparisons we have given of model selection techniques on the same data set under different conditions
seem to be relatively uncommon. However, such comparisons indicate important properties of model selection 
-- and model list selection.  Otherwise put, design principles for model
list selection need to be developed to complement those that already exist for model selection.
Accordingly, we call for more simulation
and real data comparisons of model selection principles using different model lists.

Finally, the immediate take home lesson from our 
VC dimension based approach
is that, unlike other model selection methods, it always gives useful answers. Typically, our method chose plausible 
models and, in the cases where it didn't, the results
suggested the model list was poorly chosen, in particular not complex enough.  However, in 
model selection, that, too, is a useful answer.

\acks{The authors gratefully acknowledge support from NSF grant $\#$ DMS-1419754 and invaluable computational support from the Holland Computing Center.
The authors also express their deep gratitude to two anonymous referees who gave us 
extensive guidance on how to improve our work.   Code and data for all the analyses discussed here can be found at
\url{https://github.com/
poudas1981/Wheat_data_set.}}

% Manual newpage inserted to improve layout of sample file - not
% needed in general before appendices/bibliography.

\newpage

\section{Supplementals to:  `Model Selection via the VC Dimension' }

Here we provide the figures and tables used in the main body of the paper but not presented there.   In order, we give the tables and figures for Secs. 2, 3, 5, and 6 in the main body of the paper.   The link where the data we used and sample code we used to analyze the data in Sec. 6.1.2 of the paper is \url{https://github.com/poudas1981/Wheat_data_set}.

\subsection{Tables and figures for Sec. 6}

\begin{figure}[ht]
    \caption{Interaction plot and boxplot of the Yield. Panel \ref{InteractionPlot} shows symbols for each variety over the 7 locations. The symbols are connected by lines to shows interaction. Panel \ref{BoxPlotYield} pools over varieties at each location.}
    \centering
    \begin{subfigure}[b]{0.5\textwidth}
        \centering
        \includegraphics[width = \textwidth]{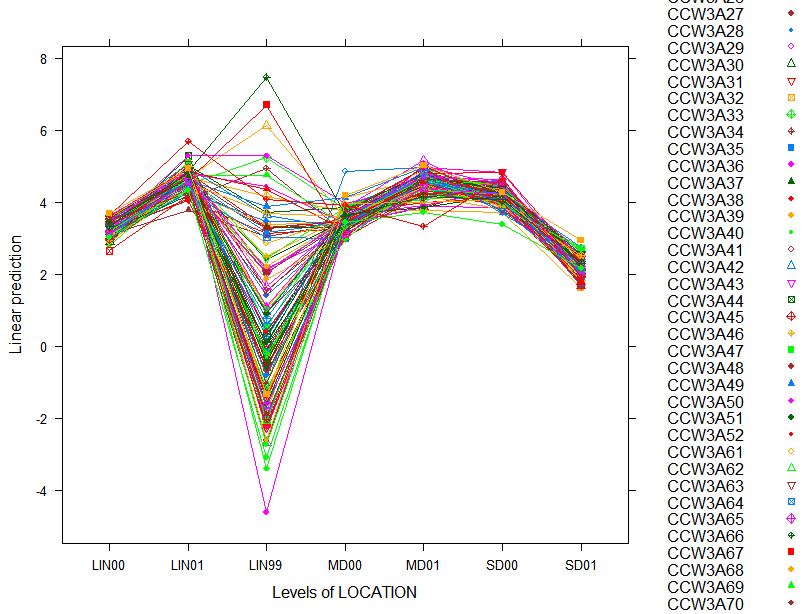}
        \caption{Interaction between Locations and Varieties}
        \label{InteractionPlot}
    \end{subfigure}
    \hfill
    \begin{subfigure}[b]{0.5\textwidth}
        \centering
        \includegraphics[width = \textwidth]{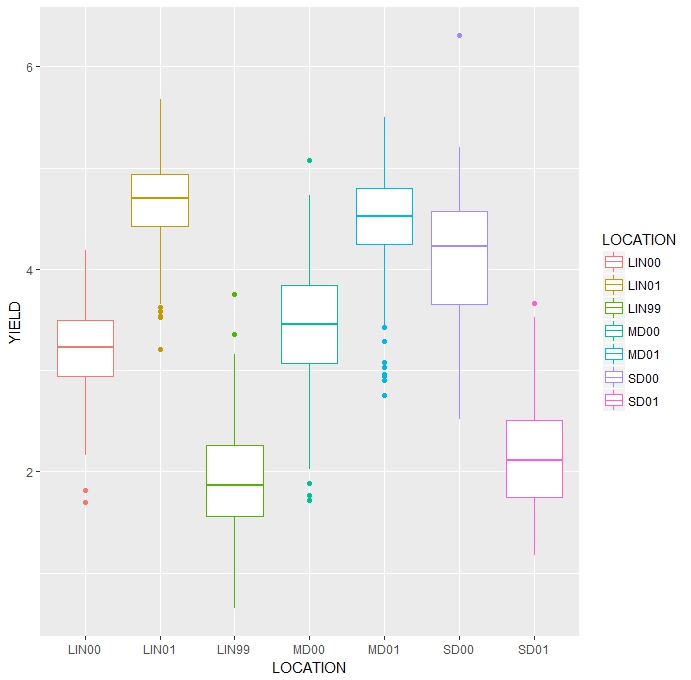}
        \caption{Boxplot of Yield by Location}
        \label{BoxPlotYield}
    \end{subfigure}
    \label{WheatPlotDesc}
\end{figure}
%\clearpage

\begin{figure}[h]
    \caption{Scatter plots of YIELD vs phenotypic covariates}
    \centering
    \includegraphics[width=0.9\textwidth]{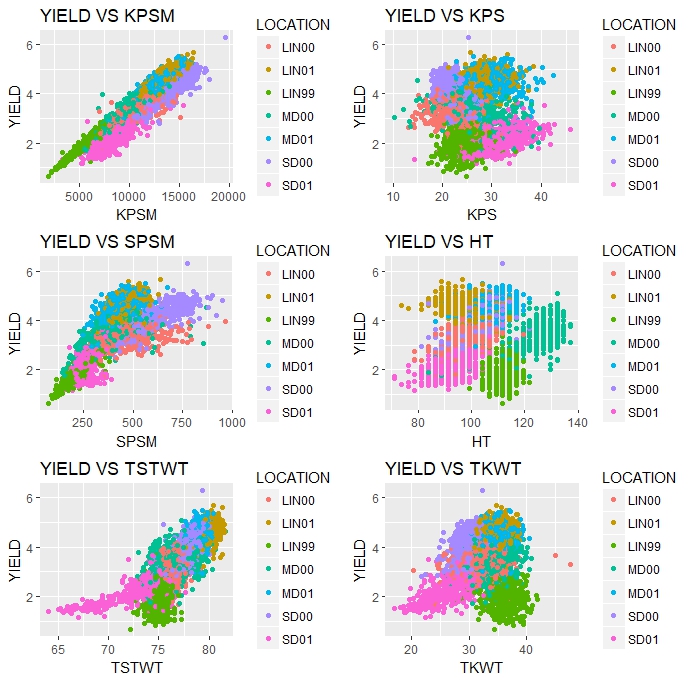}
    \label{fig:mesh1}
\end{figure}

\begin{table}[ht]
  \centering
  \caption{Correlation between phenotypic covariates over all locations}
    \begin{tabular}{cccccccc}
    \hline
  % after \\: \hline or \cline{col1-col2} \cline{col3-col4} ...
  &YIELD & HT & TSTWT  & TKWT & SPSM & KPS & KPSM \\
  %\hline
  \midrule
  YIELD & 1.00 & 0.04 & 0.80 & 0.27 & $\bf{0.74}$ & 0.022 & 0.93\\
HT &    0.04 & 1.00 & 0.09 & 0.38 & 0.02 & -0.22 & -0.072\\
TSTWT & $\bf{0.80}$ &  0.09 &  1.00 & 0.53 & 0.53 & -0.06 &  0.64\\
TKWT &  0.27 &  0.38 &  0.53 &  1.00 & -0.11 & -0.09 & -0.09\\
SPSM &  $\bf{0.74}$ &  0.02 & 0.53 & -0.11 &  1.00 & -0.5 & $\bf{0.83}$\\
 KPS &   0.022 & -0.22 & -0.06 & -0.09 & -0.5 &  1.00 & 0.014\\
 KPSM  &  $\bf{0.93}$ & -0.07 &  0.64 & -0.09 &  $\bf{0.83}$ &  0.01 & 1.00\\
\hline
\end{tabular}
\label{CorrPheno}
\end{table}

\clearpage

\newpage 
\subsection{Figures from Sec. 6.2.1}

%\subsubsection{Analysis of $\sf{Wheat}$ data in Lincoln 1999}
%\label{Lin1999}

\begin{figure}[ht]
    \caption{Scatter plots of YIELD vs phenotypic covariates in Lincoln 1999}
    \centering
    \includegraphics[width=0.8\textwidth]{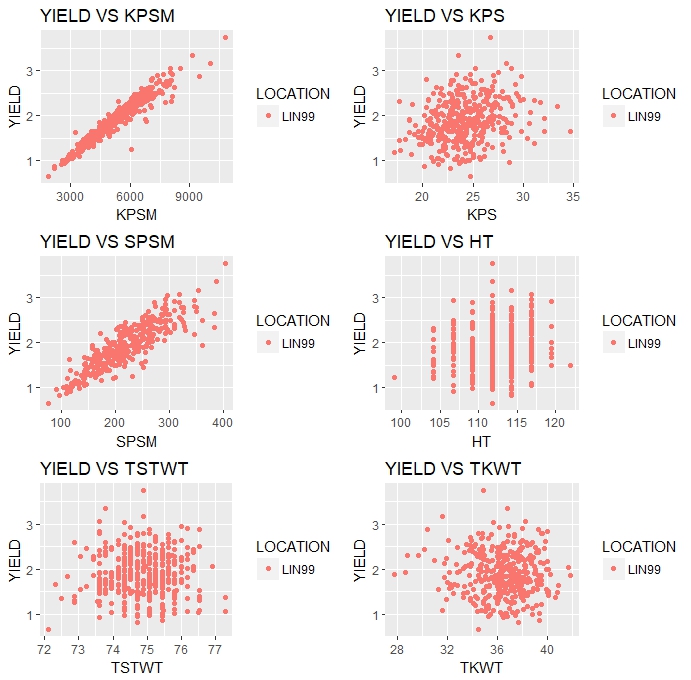}
    \label{fig:mesh1}
\end{figure}

%\subsubsection{Analysis of $\sf{Wheat}$ data in Lincoln 1999}
%\label{Lin1999}
\begin{figure}[ht]
    \caption{Scatter plots of YIELD vs phenotypic covariates in Lincoln 1999}
    \centering
    \includegraphics[width=0.7\textwidth]{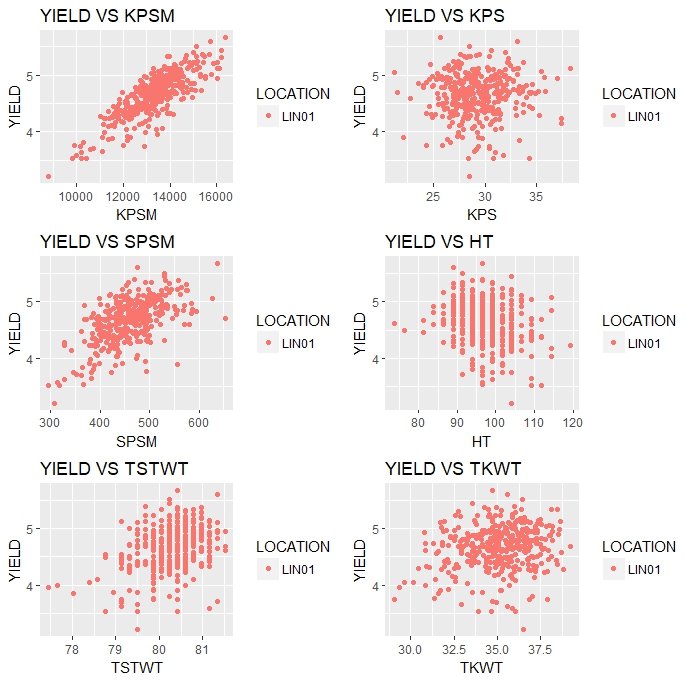}
    \label{fig:mesh1}
\end{figure}

\clearpage

\subsection{Table for Sec. 6.4.1}
\begin{table}[h]
    \caption{The column labeled size gives the number of coefficients for each of the linear model. 
    The $2^{nd}$ through $7^{th}$ columns give the corresponding estimates for $\hat{h}$, $\widehat{ERM}_{1}$, $\widehat{ERM}_{2}$, $AIC$, $BIC$ and CV for $\sf{Wheat}$ in Lincoln 2001 using phenotypic and SNP variables.}
    \begin{subtable}[h]{0.45\textwidth}
        \centering
        \begin{tabular}{ccccccc}
          Size& $\hat{d}_{VC}$ & $\widehat{ERM}_{1}$ & $\widehat{ERM}_{2}$ & AIC & BIC & CV  \\
        %\hline \hline
        \midrule
        1&  40&    \bf{4}&   12& \bf{-1055}& \bf{-1043}& \bf{0.00352}\\
        2&  40&    4&   12& -1053& -1037& 0.00352\\
        3&  40&    4&   12& -1051& -1032& 0.00353\\
        4&  39&    4&   11& -1050& -1026& 0.00353\\
        5&  34&    4&   \bf{10}& -1048& -1020& 0.04558\\
        6&  34&    4&   10& -1046& -1015& 0.04206\\
        7&  34&    4&   10& -1044& -1009& 0.04195\\
        8&  36&    4&   11& -1042& -1003& 0.04272\\
        9&  37&    4&   11& -1040&  -997& 0.04294\\
        10& 36&    4&   11& -1038&  -991& 0.04350\\
        11& 36&    4&   11& -1043&  -992& 0.07936\\
        12& 37&    4&   11& -1042&  -987& 0.08302\\
        13& 36&    4&   11& -1040&  -981& 0.08296\\
        14& 37&    4&   11& -1041&  -979& 0.06465\\
        15& 35&    4&   11& -1040&  -973& 0.06295\\
        16& 37&    4&   11& -1038&  -968& 0.06338\\
        17& 37&    4&   11& -1036&  -962& 0.06401\\
        18& 35&    4&   11& -1035&  -956& 0.06202\\
        19& 36&    4&   11& -1033&  -951& 0.06527\\
        20& 36&    4&   11& -1031&  -945& 0.06451\\
        21& 34&    4&   10& -1029&  -939& 0.06535\\
        22& 35&    4&   11& -1028&  -933& 0.06692\\
        23& 37&    4&   11& -1026&  -928& 0.06749\\
        24& 35&    4&   11& -1024&  -922& 0.06673\\
        25& 37&    4&   11& -1023&  -917& 0.06469\\
        26& 37&    4&   11& -1023&  -913& 0.06055\\
        27& 37&    4&   11& -1021&  -907& 0.06135\\
        28& 37&    4&   11& -1021&  -903& 0.05328\\
        29& 37&    4&   11& -1020&  -899& 0.03878\\
        30& 36&    4&   11& -1019&  -893& 0.04111\\
        31& 34&    4&   10& -1018&  -888& 0.04418\\
        32& 35&    4&   10& -1042&  -908& 0.01195\\
        33& \bf{35}&    4&   10& -1042&  -908& 0.01195
        \end{tabular}
        %\caption{Model sizes from 1 to 33}
        \label{LinSNP101}
    \end{subtable}
\end{table}

\newpage
\subsection{Table for Sec. 6.4.2}
\begin{table}[h]
    \caption{The column labeled size gives the number of coefficients for each of the linear model. 
    The $2^{nd}$ through $7^{th}$ columns give the corresponding estimates for $\hat{h}$, $\widehat{ERM}_{1}$, $\widehat{ERM}_{2}$, $AIC$, $BIC$ and CV for $\sf{Wheat}$ for multi-location analysis using phenotypic and SNP variables.}
    \begin{subtable}[h]{0.45\textwidth}
        \centering
        \begin{tabular}{ccccccc}
          Size& $\hat{d}_{VC}$ & $\widehat{ERM}_{1}$ & $\widehat{ERM}_{2}$ & AIC & BIC & CV \\
        %\hline \hline
        \midrule
        1&  33&    8&   16& \bf{-9142}& \bf{-9160}& \bf{0.001801809}\\
        2&  33&    8&   16& -9136& -9159& 0.001802400\\
        3&  33&    8&   16& -9129& -9159& 0.001802509\\
        4&  33&    8&   16& -9122& -9157& 0.001804594\\
        5&  33&    8&   16& -9114& -9156& 0.001806903\\
        6&  32&    \bf{7}&   16& -9109& -9156& 0.001806764\\
        7&  33&    8&   16& -9101& -9154& 0.001807267\\
        8&  33&    8&   16& -9095& -9153& 0.001811197\\
        9&  33&    8&   16& -9087& -9151& 0.001814232\\
        10& 32&    7&   16& -9081& -9152& 0.001815581\\ 
        11& 32&    7&   16& -9073& -9150& 0.001820324\\
        12& 32&    7&   16& -9067& -9149& 0.001820803\\
        13& 32&    7&   16& -9061& -9149& 0.001822245\\
        14& 31&    7&   \bf{15}& -9053& -9147& 0.001822267\\
        15& 31&    7&   15& -9045& -9145& 0.001823113\\
        16& 31&    7&   15& -9042& -9148& 0.001822037\\
        17& 43&    8&   18& -9035& -9146& 0.001827415\\
        18& 43&    8&   18& -9027& -9144& 0.001828095\\
        19& 43&    8&   18& -9019& -9142& 0.001828721\\
        20& 31&    7&   15& -9012& -9141& 0.001828733\\
        21& 32&    7&   16& -9004& -9139& 0.001835135\\
        22& 32&    7&   16& -8996& -9137& 0.001838472\\
        23& 32&    7&   16& -8989& -9136& 0.001838386\\
        24& \bf{31}&    7&   15& -8982& -9134& 0.001846458\\
        25& 43&    8&   18& -8978& -9137& 0.001844896\\
        26& 42&    8&   18& -8974& -9138& 0.001843160\\
        27& 42&    8&   18& -8966& -9137& 0.001843012\\
        28& 41&    8&   17& -8959& -9135& 0.001843302\\
        29& 41&    8&   17& -8951& -9133& 0.001847333\\
        30& 41&    8&   17& -8944& -9132& 0.001848343\\
        31& 40&    8&   17& -8944& -9138& 0.001847684\\
        32& 43&    8&   18& -8939& -9139& 0.001850055
        \end{tabular}
        %\caption{Model sizes from 1 to 33}
        \label{Gene1}
    \end{subtable}
\end{table}

\newpage

\subsection{Table for Sec 6.5.1}

\begin{table}[h]
    \caption{The column labeled size gives the number of coefficients for each of the linear model. 
    The $2^{nd}$ through $7^{th}$ columns give the corresponding estimates for $\hat{h}$, $\widehat{ERM}_{1}$, $\widehat{ERM}_{2}$, $AIC$, $BIC$ and CV for $\sf{Wheat}$ in Lincoln 2001 using phenotypic, SNP and design variables.}
    \begin{subtable}[h]{0.45\textwidth}
        \centering
        \begin{tabular}{cccccccc}
          Size& $\hat{d}_{VC}$ & $\widehat{ERM}_{1}$ & $\widehat{ERM}_{2}$ & AIC & BIC & CV \\
        %\hline \hline
        \midrule
        1&  39&    \bf{4}&   11& \bf{-1056}& \bf{-1040}& \bf{0.003523772}\\
        2&  39&    4&   11& -1054& -1034& 0.003523852\\
        3&  39&    4&   11& -1052& -1028& 0.003528050\\
        4&  39&    4&   11& -1051& -1023& 0.003531868\\
        5&  35&    4&   11& -1049& -1017& 0.045582343\\
        6&  38&    4&   11& -1047& -1012& 0.042056585\\
        7&  34&    4&   \bf{10}& -1045& -1006& 0.041950879\\
        8&  36&    4&   11& -1044& -1000& 0.042716568\\
        9&  36&    4&   11& -1042&  -995& 0.042944324\\
        10& 35&    4&   11& -1040&  -989& 0.043503630\\
        11& 36&    4&   11& -1043&  -988& 0.079361885\\
        12& 35&    4&   11& -1042&  -983& 0.083019021\\
        13& 35&    4&   11& -1040&  -977& 0.082959969\\
        14& 37&    4&   11& -1041&  -975& 0.064653613\\
        15& 35&    4&   11& -1040&  -969& 0.062946594\\
        16& 36&    4&   11& -1039&  -964& 0.063382297\\
        17& 36&    4&   11& -1037&  -958& 0.064013291\\
        18& 37&    4&   11& -1035&  -953& 0.062024613\\
        19& 34&    4&   10& -1034&  -947& 0.065271013\\
        20& 36&    4&   11& -1032&  -942& 0.064513608\\
        21& 35&    4&   11& -1030&  -936& 0.065345476\\
        22& 36&    4&   11& -1028&  -930& 0.066919227\\
        23& 36&    4&   11& -1027&  -924& 0.067491940\\
        24& 37&    4&   11& -1025&  -919& 0.066729472\\
        25& 35&    4&   11& -1024&  -914& 0.064685020\\
        26& 35&    4&   11& -1024&  -910& 0.060548876\\
        27& 36&    4&   11& -1022&  -904& 0.061349332\\
        28& 34&    4&   10& -1022&  -900& 0.053281068\\
        29& 34&    4&   10& -1022&  -896& 0.038782094\\
        30& 38&    4&   11& -1020&  -891& 0.041109422\\
        31& 34&    4&   10& -1019&  -885& 0.044183504\\
        32& 35&    4&   10& -1040&  -903& 0.011947595\\
        33& \bf{35}&    4&   10& -1040&  -903& 0.011947595\\
        \end{tabular}
        %\caption{Model sizes from 1 to 33}
        \label{phenosnpdesiGene12001}
    \end{subtable}
\end{table}

\newpage
\subsection{Table for Sec. 6.5.2}

\small{
\begin{table}[h]
    \caption{The column labeled size gives the number of coefficients for each of the linear model. 
    The $2^{nd}$ through $6^{th}$ columns give the corresponding estimates for $\hat{h}$, $\widehat{ERM}_{1}$, $\widehat{ERM}_{2}$, AIC, BIC, and CV for $\sf{Wheat}$ for multi-location using phenotypic, SNP and design variables.}
    \begin{subtable}[h]{0.45\textwidth}
        \centering
        \begin{tabular}{ccccccc}
          Size& $\hat{h}$ & $\widehat{ERM}_{1}$ & $\widehat{ERM}_{2}$ & AIC & BIC & CV \\
        %\hline \hline
        \midrule
        1&  33&    8&   16& \bf{-9142}& \bf{-9160}& \bf{0.001801809}\\
        2&  33&    8&   16& -9136& -9159& 0.001802400\\
        3&  33&    8&   16& -9129& -9159& 0.001802509\\
        4&  33&    8&   16& -9122& -9157& 0.001804594\\
        5&  33&    8&   16& -9114& -9156& 0.001806903\\
        6&  32&    \bf{7}&   16& -9109& -9156& 0.001806764\\
        7&  33&    8&   16& -9101& -9154& 0.001807267\\
        8&  33&    8&   16& -9095& -9153& 0.001811197\\
        9&  33&    8&   16& -9087& -9151& 0.001814232\\
        10& 32&    7&   16& -9081& -9152& 0.001815581\\
        11& 32&    7&   16& -9073& -9150& 0.001820324\\
        12& 32&    7&   16& -9067& -9149& 0.001820803\\
        13& 32&    7&   16& -9061& -9149& 0.001822245\\
        14& 31&    7&   \bf{15}& -9053& -9147& 0.001822267\\
        15& 31&    7&   15& -9045& -9145& 0.001823113\\
        16& 31&    7&   15& -9042& -9148& 0.001822037\\
        17& 43&    8&   18& -9035& -9146& 0.001827415\\
        18& 43&    8&   18& -9027& -9144& 0.001828095\\
        19& 43&    8&   18& -9019& -9142& 0.001828721\\
        20& 31&    7&   15& -9012& -9141& 0.001828733\\
        21& 32&    7&   16& -9004& -9139& 0.001835135\\
        22& 32&    7&   16& -8996& -9137& 0.001838472\\
        23& 32&    7&   16& -8989& -9136& 0.001838386\\
        24& \bf{31}&    7&   15& -8982& -9134& 0.001846458\\
        25& 43&    8&   18& -8978& -9137& 0.001844896\\
        26& 42&    8&   18& -8974& -9138& 0.001843160\\
        27& 42&    8&   18& -8966& -9137& 0.001843012\\
        28& 41&    8&   17& -8959& -9135& 0.001843302\\
        29& 41&    8&   17& -8951& -9133& 0.001847333\\
        30& 41&    8&   17& -8944& -9132& 0.001848343\\
        31& 40&    8&   17& -8944& -9138& 0.001847684\\
        32& 43&    8&   18& -8939& -9139& 0.001850055
        \end{tabular}
        %\caption{Model sizes from 1 to 32}
        \label{multiphenosnpdesiGene12001}
    \end{subtable}
\end{table}

}

\bibliography{references}

\end{document}